\documentclass[a4paper,10pt,leqno]{amsart}
\usepackage{amsmath,amsfonts,amssymb,amsthm,epsfig,epstopdf,url,array}
\usepackage{amsthm}
\usepackage{mathrsfs}
\usepackage{epsfig}
\usepackage{graphicx}

\makeatletter
\newcommand{\bigperp}{%
  \mathop{\mathpalette\bigp@rp\relax}%
  \displaylimits
}

\newcommand{\bigp@rp}[2]{%
  \vcenter{
    \m@th\hbox{\scalebox{\ifx#1\displaystyle2.1\else1.5\fi}{$#1\perp$}}
  }%
}
\makeatother

\newtheorem{thm}{Theorem}[section]
\newtheorem{prp}[thm]{Proposition}
\newtheorem{cor}[thm]{Corollary}
\newtheorem{lem}[thm]{Lemma}
\newtheorem{rmk}[thm]{Remark}

\numberwithin{equation}{section}

\newcommand{\N}{{\mathbb{N}}}
\newcommand{\Z}{{\mathbb{Z}}}
\newcommand{\z}{{\mathbb{Z}}}

\newcommand{\ord}{\text{ord}}

\newcommand{\nequiv}{\not\equiv}

\newcommand{\rank}{\operatorname{rank}}

\newcommand{\ceil}[1]{\left\lceil #1 \right\rceil}

\title[A finiteness theorem for universal $m$-gonal forms]{A finiteness theorem for universal $m$-gonal forms}

\author{BYEONG MOON KIM and Dayoon Park}
\address{Department of Mathematics, Kangnung National University, Kangnung, 210-702, Korea}
\email{kbm@kangnung.ac.kr}
\thanks{}

\address{Department of Mathematics, The University of Hong Kong, Hong Kong}
\email{pdy1016@hku.hk}
\thanks{}

\begin{document}
\maketitle

\begin{abstract}
In this paper, we study the set of positive integers that characterize the universality of $m$-gonal form.
\end{abstract}

\section{introduction and statement of results}

In 1993, Conway and Schneeberger announced the {\it $15$-theorem} stating that a classical quadratic form is universal if it represents all positive integers up to $15$.
This stunning criterion makes to readily check the universality of classic quadratic forms.
They also gave the {\it $290$-conjecture} that every quadratic form, regardless of being classical or not, is universal if it represents all positive integer up to $290$.
Baghava and Hanke \cite{290} proved this conjecture in 2000.
Baghava also showed that for every $S \subset \mathbb N$ there is a finite subset $S_0$ of $S$ such that if a quadratic form represents all elements of $S_0$, then it represents all numbers in $S$.
This criterion is generalized to arbitrary rank by the first author, Kim, and Oh.
In other words, for any set $S$ of quadratic forms with a fixed rank $n$, there is a finite subset $S_0$ of $S$ such that every quadratic form representing all members of $S_0$ represents all elements of $S$.

In the same manner with the 15-theorem, one may consider similar theorem on $m$-gonal form.
The {\it $x$-th $m$-gonal number} 
\begin{equation} \label{polynum} P_m(x)=\frac{(m-2)x^2-(m-4)x}{2}\end{equation}
 is defined as the total number of dots to constitute a regular $m$-gon with $x$ dots for each side for $m \ge 3$ with $x \in \N$.
On the other hand, we may generalize the $m$-gonal number (\ref{polynum}) by allowing the variable $x$ to be not only positive integer but also non-positive integer too.
In this paper, we call a weighted sum of generalized $m$-gonal numbers
\begin{equation} \label{polyform}\Delta_{m,\mathbf a}(\mathbf x)=\sum \limits_{i=1}^n a_iP_m(x_i)\end{equation}
with $a_i \in \mathbb N$ and $x_i \in \mathbb Z$ as {\it $m$-gonal form}.
In \cite{BJ}, Kane and Liu claimed that there always exists a (unique, minimal) $\gamma_m \in \mathbb N$ such that if an $m$-gonal form 
$\Delta_{m,\mathbf a}(\mathbf x)$ represents every positive integer up to $\gamma_m$, then the $m$-gonal form $\Delta_{m,\mathbf a}(\mathbf x)$ is universal.
There are some examples for $\gamma_m$ which are concretely calculated for some small $m$'s.
Bosma and Kane \cite{BK} showed $\gamma_3=8$. The $15$-Theorem deduces $\gamma_4=15$.
Ju \cite{J} showed $\gamma_5=109$.
Since any hexagonal number is a triangular number and any triangular number is a hexagonal number, one can get $\gamma_3=\gamma_6=8$.
Ju and Oh \cite{JO} proved that $\gamma_8=60$.

An interesting question about the growth of $\gamma_m$ as a function of $m$ was firstly suggested by Kane and Liu \cite{BJ} who proved $m-4 \le \gamma_m \ll m^{7+\epsilon}$.
In this paper, we improve the Kane and Liu's result by showing that the growth of $\gamma_m$ is exactly linear on $m$.
On the other words, we show the following theorem
\begin{thm} \label{main}
There exists an absolute constant $C>0$ such that $$m-4 \le \gamma_m \le C(m-2)$$
for any $m \ge 3$.
\end{thm}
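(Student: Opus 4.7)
The lower bound $m-4\le\gamma_m$ is witnessed by the form $\sum_{i=1}^{m-5}P_m(x_i)$: setting $x_i\in\{0,1\}$ covers $\{0,1,\dots,m-5\}$, but the next available value of $P_m$ is $P_m(-1)=m-3$, leaving $m-4$ unrepresented. All the content therefore lies in the upper bound.

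My plan for the upper bound is an escalator-style argument that exploits the sparsity of small values of $P_m$. Order the coefficients $1=a_1\le a_2\le\cdots\le a_n$. For $2\le k\le m-4$, any representation $k=\sum a_iP_m(x_i)$ must use $x_i\in\{0,1\}$ for every $i$, since $P_m(-1)=m-3>k$ and $P_m(\pm 2)\ge m$; hence the subset sums of $\{a_1,\dots,a_n\}$ cover $\{1,\dots,m-4\}$. The greedy inequality $a_{k+1}\le 1+\sum_{i\le k}a_i$ then extracts a \emph{core} subsequence $(a_{j_1},\dots,a_{j_t})$ of length $t=O(\log m)$ whose subset sums already contain $\{0,1,\dots,m-4\}$. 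Once this core is in place, setting one core index $j$ to $x_j=-1$ while the others lie in $\{0,1\}$ produces an interval of length $m-4$ based at $a_j(m-3)$; the union over $j$, combined with the base interval $\{0,\dots,m-4\}$ and with the contributions from $P_m(2)=m$ and $P_m(-2)=3m-8$, already yields an initial represented interval at least quadratic in $m$. Iterating this reach-expansion together with the usual escalator (each new coefficient $a_{n+1}$ is bounded by the smallest unrepresented integer), the process will terminate in a bounded number of rounds with either a universal core sub-form, or a small unrepresented integer below the assumed threshold $C(m-2)$, contradicting the hypothesis.

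The main obstacle is this last step: upgrading ``the core is structurally rich'' to actual universality of the full $m$-gonal form. For this I plan to use the identity
\[
8(m-2)\,P_m(x)=\bigl(2(m-2)x-(m-4)\bigr)^2-(m-4)^2,
\]
which shows that $\Delta_{m,\mathbf a}(\mathbf x)=N$ is equivalent to $\sum a_iy_i^2=8(m-2)N+(m-4)^2\sum a_i$ with each $y_i\equiv 4-m\pmod{2(m-2)}$. The universality question for $\Delta_{m,\mathbf a}$ thus becomes one of representing an arithmetic progression by a diagonal quadratic form under congruence conditions, and the natural scale $2(m-2)$ is precisely what produces the linear factor $(m-2)$ in the final bound. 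The delicate part is the local analysis at primes dividing $2(m-2)$: representability of all progression elements up to $C(m-2)$ must be upgraded to full local representability via the Local Square Theorem and Hensel-type lemmas (which already appear in the paper's theorem environments), after which an escalator analysis of the diagonal form in the Bhargava style produces the desired global universality.
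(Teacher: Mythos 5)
Your lower-bound argument is fine and is essentially Guy's. For the upper bound, the early steps (any representation of $k\le m-4$ must use $x_i\in\{0,1\}$, hence the subset sums cover $\{1,\dots,m-4\}$ and a core of $O(\log m)$ coefficients is forced, with the core shape ranging over a finite set independent of $m$) agree with the paper's Lemma \ref{esc} and Remark \ref{rmk s}. But the mechanism you propose for converting this into universality has a genuine gap. After the substitution $8(m-2)P_m(x)=(2(m-2)x-(m-4))^2-(m-4)^2$ you must represent integers of size about $Cm^2$ by $\sum a_iy_i^2$ subject to $y_i\equiv 4-m\pmod{2(m-2)}$. The congruence sublattice here has level growing with $m$, and the primes at which nontrivial local conditions arise are exactly those dividing $2(m-2)$ --- a set that changes with $m$ and sweeps out all primes as $m$ varies. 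Consequently the ``sufficiently large'' threshold in any local-to-global theorem you invoke depends on the lattice and hence on $m$, and nothing in your outline controls that dependence uniformly; this is precisely the obstruction that limited Kane and Liu to $m^{7+\epsilon}$. Your intermediate claim that the translates $a_j(m-3)+\{0,\dots,m-4\}$ union to an interval of quadratic length is also unjustified, since core coefficients $a_j$ can be as large as $2^{15}$ and these translates are then widely separated.

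The paper avoids the $m$-dependence entirely by a different decomposition: writing $\Delta_{m,\mathbf a}(\mathbf x)=\frac{m-2}{2}\{Q_{\mathbf a}(\mathbf x)-B_{\mathbf a}(\alpha_n,\mathbf x)\}+B_{\mathbf a}(\alpha_n,\mathbf x)$ and restricting $\mathbf x$ to fixed hyperplane sublattices: the rank-$5$ lattice $B_{\mathbf a}(\alpha_n,\mathbf x)=0$, which is isotropic at every prime and so yields all multiples of $s(m-2)$ (Lemma \ref{sf}), and the rank-$8$ orthogonal complements $L_{\mathbf a_2,\beta}$ with $B_{\mathbf a_2}(\alpha,\mathbf x)=r_1$ bounded, which yield all residues modulo $s(m-2)$ inside $[0,C(m-2)]$ (Lemma \ref{lu} and its refinements in Sections 5--6). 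All local computations then concern lattices built from the finitely many core tuples in $A$ and are independent of $m$; the factor $m-2$ enters only as the outer multiplier, which is exactly what produces the linear bound. To repair your argument you would need an analogous device decoupling the local data from $m$, or a quantitative local-global theorem with explicit dependence on the level of the congruence condition sharp enough to recover linearity --- neither of which is supplied in your proposal.
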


Although in \cite{BJ}, they used an analytic method, we use only the arithmetic theory of quadratic forms.
Especially we concentrate on the study of a system 
$$\begin{cases}a_1x_1^2+\cdots+a_nx_n^2 \equiv t \pmod{s}\\ a_1x_1+\cdots+a_nx_n=r \end{cases}$$
 of a quadratic congruence and a linear equation by applying both local theory and global theory of quadratic forms.

\section{preliminary}

Before we move on, we introduce our languages and some notations.
We adopt the language of arithmetic theory of quadratic forms.
The term {\it lattice} always refer to an integral quadratic $\mathbb Z$-lattice on a positive definite quadratic space over $\mathbb Q$, i.e., $\mathbb Z$-module $L$ consisting a quadratic map $Q$ on it and the corresponding bilinear symmetric mapping $B(x,y)=\frac{1}{2}(Q(x+y)-Q(x)-Q(y))$.
We write the (quadratic) $\mathbb Z$-lattice as $(L,Q)$ and if there is no confusion on the emploied quadratic map $Q$, then we simply write the $\mathbb Z$-lattice as $L$.
We say an integer $n \in \Z$ is {\it represented} by $L$ if there is an element $\mathbf v \in L $ for which $Q(\mathbf v)=n$.  
In this paer, without loss of generality, we always assume that $a_i \le a_{i+1}$ in (\ref{polyform}).
We say an integer $n \in \Z$ is {\it represented} by $\Delta_{m,\mathbf a}(\mathbf x)$ if there is an element $\mathbf v \in \Z^n $ for which $\Delta_{m,\mathbf a}(\mathbf v)=n$.  

We denote $\mathbf e_i$ as the vector with $1$ in the $i$-th coordinate and $0$'s elsewhere. 
We put $\alpha_n = \sum \limits_{i=1}^{n}\mathbf e_i \in \mathbb Z^n$ and especially when $n=10$, simply write $\alpha:=\alpha_{10} \in \mathbb Z^{10}$.

For $\mathbf a \in \N^n$, we define an $n$-ary diagonal quadratic $\mathbb Z$-lattice {\it $(\mathbb Z^n, Q_{\mathbf a})$} by $$Q_{\mathbf a}(x_1,\cdots, x_n):=a_1x_1^2+\cdots+a_nx_n^2$$ and write the corresponding symmetric bilinear mapping as $B_{\mathbf a}$, i.e., 
$$B_{\mathbf a}(\mathbf x,\mathbf y):=\sum \limits_{i=1}^na_ix_iy_i.$$
And we define an $(n-1)$-ary $\mathbb Z$-sublattice $(L_{\mathbf a},Q_{\mathbf a})$ of $(\mathbb Z^n,Q_{\mathbf a})$ by a hyperplane $$L_{\mathbf a}:=\{(x_1,\cdots,x_n) \in \mathbb Z^n| B_{\mathbf a}(\alpha_n,\mathbf x)=a_1x_1+\cdots+a_nx_n=0\}.$$
For a vector $\beta=(\beta_1,\cdots,\beta_n)\in \mathbb Z^n$, we define an $(n-2)$-ary $\mathbb Z$-sublattice $(L_{\mathbf a,\beta},Q_{\mathbf a})$ of $(L_{\mathbf a},Q_{\mathbf a})$ by $$L_{\mathbf a,\beta}:=\{(x_1,\cdots, x_n) \in L_{\mathbf a} | B_{\mathbf a}(\beta,\mathbf x)=a_1\beta_1 x_1+\cdots + a_n\beta_n x_n=0\}.$$
Any unexplained terminology and notation can be found in \cite{O}.

\vskip 1em
The following basic facts for $\gamma_m$ are known.

\begin{lem} \label{esc}
\begin{itemize}

\item[(i)] $\gamma_m$ always exists and is finite for any $m \ge 3$.
\item[(ii)] $\gamma_m \ge m-4$.
\item[(iii)] 
In order for an $m$-gonal form $\Delta_{m,\mathbf a}(x_1,\cdots,x_n)= \sum\limits_{i=1}^n a_iP_m(x_i)$ with $a_i \le a_{i+1}$ to represent all positive integers up to $m-4$, the followings should be satisfied

\begin{equation} \label{gm}
\begin{cases}
a_1=1 & \\
a_{i+1}\le a_1+\cdots+a_i+1 & \text{for all $i$ such that $a_1+\cdots+a_i<m-4$}\\
\sum\limits_{i=1}^n a_i \ge m-4 & \\
\end{cases}
\end{equation}
\end{itemize}
\end{lem}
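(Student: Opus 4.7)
The plan is to prove the three parts separately, with part (i) following directly from the existence and finiteness theorem of Kane and Liu \cite{BJ}, which I would simply cite. The entire argument for (ii) and (iii) rests on one elementary observation about the generalized $m$-gonal numbers: for $m \ge 5$ one has $P_m(0)=0$, $P_m(1)=1$, $P_m(-1) = m-3$, $P_m(2) = m-1$, and $P_m(x) \ge 3m-8$ for $|x| \ge 2$, so $P_m(\Z)$ contains no integer strictly between $1$ and $m-3$. The cases $m \in \{3,4\}$ make everything trivial since then $m-4 \le 0$.

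For part (ii), I would exhibit the explicit $(m-5)$-ary form $\Delta_{m, \mathbf a}$ with $\mathbf a = (1,1,\ldots,1)$. Setting $k$ of the variables equal to $1$ and the rest to $0$ represents every $k \in \{1,\ldots,m-5\}$, so this form represents all positive integers up to $m-5$. By the gap observation, any attempted representation of $m-4$ must have each $x_i \in \{0,1\}$ (otherwise a single summand already contributes at least $m-3 > m-4$), whence the sum is at most $m-5$. Hence this form witnesses $\gamma_m \ge m-4$.

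For part (iii), I would apply the gap three times. To represent $1$, exactly one summand $a_i P_m(x_i)$ must equal $1$ with the rest zero; the gap forces $P_m(x_i) = 1$ and $a_i = 1$, so by the ordering $a_1 = 1$. Writing $s_i := a_1 + \cdots + a_i$, suppose for contradiction $s_i < m-4$ while $a_{i+1} \ge s_i + 2$; then $N := s_i + 1 \le m-4$ must be represented. No variable $x_j$ with $j \ge i+1$ can be nonzero (as $a_j \ge a_{i+1} > N$), so only $x_1,\ldots,x_i$ contribute; the gap then forces each $P_m(x_j) \in \{0,1\}$, and the total is at most $s_i < N$, a contradiction. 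The same argument applied to $m-4$ itself forces $P_m(x_j) \in \{0,1\}$ for all $j$, giving $m-4 = \sum_{x_j=1} a_j \le \sum_{j=1}^n a_j$. There is no substantive obstacle anywhere; each step is a one-line consequence of the gap structure of $P_m(\Z)$.
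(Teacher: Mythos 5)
Your proposal is correct and is essentially the paper's own argument: the paper cites Kane--Liu for (i) and Guy for (ii), and settles (iii) in one line by exactly your ``gap'' observation that the smallest nonzero generalized $m$-gonal number other than $P_m(1)=1$ is $P_m(-1)=m-3$, so your write-up simply fills in the routine details (including the standard witness $\sum_{i=1}^{m-5}P_m(x_i)$ for (ii)). The only blemishes are two harmless numerical slips --- $P_m(2)=m$, not $m-1$, and for $|x|\ge 2$ one only has $P_m(x)\ge m$ rather than $\ge 3m-8$ (that value is $P_m(-2)$) --- which do not affect anything, since every step of your argument uses only that each nonzero value of $P_m$ is either $1$ or at least $m-3$.
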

\begin{proof}
(i) is due to Kane and Liu \cite{BJ}.\\
(ii) is due to Guy \cite{G}.\\
(iii) directly follows from the fact that the smallest nonzero $m$-gonal number is $P_m(-1)=m-3$ except $P_m(1)=1$.

\end{proof}
\begin{rmk} \label{r0}
From $a_1=1$, inductively constructed $a_k$ under the condition $a_{i+1} \le a_1+\cdots + a_i +1$ for all $i \le k-1$ can not exceed $2^{k-1}$. So by Lemma \ref{esc}, if $m$ is large, in order for an $m$-gonal form $\Delta_{m,\mathbf a}(\mathbf x)$ to represent every positive integer up to only $m-4$, it is needed quite a lot of components more precisely, there is needed at least $\ceil{\log_2 (m-3)}$ components.
\end{rmk}

\section{Motivation and partial results}
The following well known local-to-global theorem in the arithmetic theory of quadratic form ensures for being represented of sufficiently large integer which is locally represented.
\begin{thm} \label{hkk}
Let $(L,Q)$ be a quadratic $\mathbb Z$-lattice of $\rank (L) \ge 5$.
Then there is a constant $c(L)>0$ such that $L$ represents any integer $n$, provided that 
$$\begin{cases}
n \ge c(L)\\
L \otimes \mathbb Z_p\text{ represents }n \text{ at every prime }p.
\end{cases}$$
\end{thm}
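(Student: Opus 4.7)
The plan is to establish the theorem via the classical modular-forms approach, comparing the Eisenstein and cusp form contributions to the theta series of $L$. First I would associate to $L$ its theta series
$$\theta_L(\tau) = \sum_{v \in L} e^{2\pi i Q(v)\tau},$$
a holomorphic modular form of weight $k = \rank(L)/2$ on some congruence subgroup, whose $n$-th Fourier coefficient equals the representation count $r_L(n)$. The mass-weighted genus theta series $\theta_{\gen(L)}$ lies in the Eisenstein subspace of the relevant space of modular forms, so one obtains a canonical decomposition $\theta_L = \theta_{\gen(L)} + f$ where $f$ is cuspidal, and the goal becomes showing that the Eisenstein main term $a_n(\theta_{\gen(L)})$ strictly exceeds the error $|a_n(f)|$ for all sufficiently large $n$.

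For the main term, Siegel's formula expresses the $n$-th Fourier coefficient as a product of local representation densities:
$$a_n(\theta_{\gen(L)}) = c_L\cdot n^{k-1}\prod_p \alpha_p(n, L).$$
Under the hypothesis that $L\otimes \Z_p$ represents $n$ at every prime $p$, every factor $\alpha_p(n,L)$ is strictly positive. For primes $p\nmid 2\disc(L)$, standard Hensel-type considerations show that $\prod_{p\nmid 2\disc(L)}\alpha_p(n,L)$ is bounded below by a positive constant depending only on $L$, and at each of the finitely many bad primes a case analysis of the $p$-adic Jordan decomposition of $L$ yields a similar $n$-uniform lower bound. Combining, $a_n(\theta_{\gen(L)}) \gg_L n^{k-1}$ whenever $n$ is locally represented everywhere.

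For the cusp form error, the classical Hecke bound for Fourier coefficients of holomorphic cusp forms gives $a_n(f) = O_L(n^{k/2})$. Since $\rank(L)\ge 5$ means $k\ge 5/2 > 2$, we have $k/2 < k-1$, so the Eisenstein term strictly dominates. Therefore $r_L(n) > 0$ for all sufficiently large $n$ locally represented everywhere, which furnishes the desired threshold $c(L)$. The main technical obstacle I would expect is the uniform lower bound on local densities at bad primes: one must verify that mere local representability of $n$ translates into an effective positive lower bound on $\alpha_p(n,L)$ valid across all such $n$, which requires a careful case analysis of the $p$-adic Jordan structure of $L\otimes\Z_p$.
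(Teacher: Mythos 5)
Your outline is a correct and standard proof of this classical local--global theorem: decompose $\theta_L=\theta_{\gen(L)}+f$ with $f$ cuspidal (Siegel), bound the Eisenstein coefficient below by $n^{k-1}$ times a product of local densities that is uniformly positive under the local representability hypothesis, and beat it against the Hecke bound $O(n^{k/2})$ for the cusp part, which wins precisely because $k=\rank(L)/2\ge 5/2>2$. The paper, by contrast, offers no argument at all here: its entire proof is a citation to Tartakovskij \cite{T} (the result is also the Hsia--Kitaoka--Kneser circle of ideas cited elsewhere in the bibliography), so there is no internal approach to compare yours against. Two points in your sketch deserve the emphasis you give them. First, the uniform lower bound on the bad local densities really does use $\rank(L)\ge 5$: every $\Z_p$-lattice of rank at least five is isotropic, which is what prevents the density $\alpha_p(n,L)$ from degenerating as $\ord_p(n)\to\infty$ and is exactly the mechanism the paper itself exploits in Lemma 3.2. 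Second, for odd rank the weight is half-integral, so one should say explicitly that the trivial (Petersson/Hecke) bound $a_n(f)=O(n^{k/2})$ is being used rather than anything Ramanujan-type; fortunately that is all that is needed since $k/2<k-1$ already at $k=5/2$. With those caveats your argument is complete in outline and is, in effect, the standard modern proof of the theorem the paper takes as a black box.
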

\begin{proof}
See \cite{T}.
\end{proof}

\vskip 1em

An $m$-gonal form $\Delta_{m,\mathbf a}(\mathbf x)=\sum \limits_{i=1}^n a_iP_m(x_i)$ can be expressed as \begin{equation}\label{b} \Delta_{m,\mathbf a} (\mathbf x)=\frac{m-2}{2}\{Q_{\mathbf a}(\mathbf x)-B_{\mathbf a}(\alpha_n,\mathbf x)\}+B_{\mathbf a}(\alpha_n,\mathbf x). \end{equation} 
With the expression (\ref{b}), we particularly investigate the cases that the linear form $B_{\mathbf a}(\alpha_n, \mathbf x)$ is a specified integers. 
First of all, in Lemma \ref{sf}, we consider the case $B_{\mathbf a}(\alpha_n,\mathbf x)=0$.

\vskip 1em

\begin{lem} \label{sf}
For $6$-tuple $\mathbf a:=(a_1,\cdots, a_6) \in \mathbb N^6$, there is a constant $s(\mathbf a) \in \mathbb N$ for which the senary $m$-gonal form 
$$\Delta_{m,\mathbf a}(\mathbf x)=\sum_{i=1}^6a_iP_m(x_i) $$
 represents every multiple of $s(\mathbf a)\cdot(m-2)$ for any $m \ge 3$.
\end{lem}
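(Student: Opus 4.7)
My plan is to exploit the identity (\ref{b}) to reduce the problem to a question about the rank-$5$ hyperplane lattice $L_{\mathbf a}$, and then invoke the local-to-global Theorem~\ref{hkk}. Restricting $\mathbf x$ to $L_{\mathbf a}$ forces $B_{\mathbf a}(\alpha_6,\mathbf x)=0$, so (\ref{b}) reduces to $\Delta_{m,\mathbf a}(\mathbf x) = \tfrac{m-2}{2}\,Q_{\mathbf a}(\mathbf x)$. Moreover $Q_{\mathbf a}(\mathbf x) \equiv \sum a_i x_i^2 \equiv \sum a_i x_i \equiv 0 \pmod 2$ on $L_{\mathbf a}$, so this value is always an integer. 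Hence to represent $k\cdot s(\mathbf a)\cdot(m-2)$ for $k \in \mathbb N$, it suffices to find $\mathbf x \in L_{\mathbf a}$ with $Q_{\mathbf a}(\mathbf x) = 2k\,s(\mathbf a)$, and the lemma will follow once I choose $s(\mathbf a)\in \mathbb N$, depending only on $\mathbf a$, so that every positive multiple of $2s(\mathbf a)$ is represented by $(L_{\mathbf a},Q_{\mathbf a})$.

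The remaining step is purely local-to-global. Since $\rank(L_{\mathbf a})=5$, Theorem~\ref{hkk} produces a threshold $c(L_{\mathbf a})>0$ beyond which local representability implies global representability. At each prime $p$, the $5$-dimensional quadratic space $L_{\mathbf a}\otimes\mathbb Q_p$ is universal (any rank-$\ge 5$ form over $\mathbb Q_p$ is isotropic, hence universal), and a standard Hensel/compactness argument then yields an exponent $e_p\ge 0$ such that $L_{\mathbf a}\otimes \mathbb Z_p$ represents every element of $p^{e_p}\mathbb Z_p$. Away from $2\disc(L_{\mathbf a})$ one may take $e_p=0$, so only finitely many primes contribute. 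I then take $s(\mathbf a)\in\mathbb N$ to be any positive integer divisible by $\prod_p p^{e_p}$ that also satisfies $2s(\mathbf a)\ge c(L_{\mathbf a})$. For every $k\ge 1$ the value $2k\,s(\mathbf a)$ lies in $p^{e_p}\mathbb Z_p$ at every prime (hence is locally represented everywhere) and exceeds $c(L_{\mathbf a})$, so is represented globally by $L_{\mathbf a}$; the case $k=0$ is trivial.

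The main obstacle will be controlling the local exponents $e_p$ and the constant $c(L_{\mathbf a})$ purely in terms of $\mathbf a$, with no dependence on $m$. Because the lattice $L_{\mathbf a}$ is itself defined independently of $m$, both $c(L_{\mathbf a})$ and all $e_p$ depend only on $\mathbf a$, and an explicit bound on each $e_p$ can be extracted from the Jordan decomposition of $L_{\mathbf a}\otimes \mathbb Z_p$ at the finitely many bad primes. This uniformity in $m$ is precisely what allows a single constant $s(\mathbf a)$ to work for all $m\ge 3$ simultaneously.
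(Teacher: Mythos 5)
Your proof is correct and takes essentially the same route as the paper's: both restrict $\mathbf x$ to the rank-$5$ hyperplane lattice $L_{\mathbf a}$ via (\ref{b}), use isotropy in rank $5$ to get local representability of everything in $p^{e_p}\mathbb Z_p$ at each prime with $e_p=0$ almost everywhere, and then apply Theorem \ref{hkk} to choose $s(\mathbf a)$ divisible by the product of the $p^{e_p}$ and large enough to clear the threshold. Your explicit parity check that $Q_{\mathbf a}$ is even on $L_{\mathbf a}$ is a detail the paper leaves implicit, but otherwise the arguments coincide.
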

\begin{proof}
If $\mathbf x \in L_{\mathbf a}$, then since $B_{\mathbf a}(\alpha_6,\mathbf x)=0$, we have that $$\Delta_{m,\mathbf a}(\mathbf x)=\frac{m-2}{2}Q_{\mathbf a}(\mathbf x)$$ by (\ref{b}).
Since $\rank(L_{\mathbf a})=5$, the quadratic $\Z$-lattice $(L_{\mathbf a},Q_{\mathbf a})$ is isotropic over $\mathbb Z_p$ for all primes $p$.
So we may get that for each prime $p$, there is a minimal integer $e(p)$ for which $L_{\mathbf a} \otimes \mathbb Z_p$ represents every $p$-adic integer in $p^{e(p)}\mathbb Z_p$ with $e(p)=0$ for almost all primes $p$.
Since the quadratic $\z$-lattice $L_{\mathbf a}$ locally represents all the multiples of $e:=\prod p^{e(p)} (\in \N)$,
by Theorem \ref{hkk}, there is a constant $2s(\mathbf a)$ (with $e | 2s(\mathbf a)$) for which the quadratic $\mathbb Z$-lattice $(L_{\mathbf a},Q_{\mathbf a})$ represents all the multiples of $2s(\mathbf a)$.
Which induces that the $m$-gonal form $\Delta_{m,\mathbf a}(\mathbf x)$ represents every multiple of $s(\mathbf a)\cdot(m-2)$ when $\mathbf x $ runs through $L_{\mathbf a}(\subset \z^6)$ from (\ref{b}).
\end{proof}

\vskip 0.8em

\begin{rmk} \label{rmk s}
For sufficiently large $m >2^{16}+3$, in order for an $m$-gonal form $\Delta_{m,\mathbf a}(\mathbf x)$ to represent every positive integer up to $m-4$, it is required at least $16( \le \log_2 (m-4))$ components and the tuple of its first $16$ coefficients must coincide with one of $16$-tuples in the finite set $$A:=\{(a_1,\cdots,a_{16})\in \mathbb N^{16}| a_1=1, a_i \le a_{i+1}\le a_1+\cdots+a_i+1 \text{ for all }1\le i \le 15\}$$
by Lemma \ref{esc}.

Let $s$ be the least common multiple of all $s(a_{j_1},\cdots, a_{j_6})$'s where $(a_{j_1},\cdots, a_{j_6})$ runs through all the $6$-subtuples of elements in $A$ $$\{(a_{j_1},\cdots, a_{j_6}) \in \mathbb N^6|(a_1,\cdots, a_{16}) \in A \text{ with }1\le j_1 < \cdots < j_6 \le 16\}.$$
Then we may obtain following Corollary.
\end{rmk}

\begin{cor} \label{6}
For $m>2^{16}+3$, let $\Delta_{m,\mathbf a}(\mathbf x)= \sum\limits_{i=1}^n a_iP_m(x_i)$ be an $m$-gonal form which represents every positive integer up to $m-4$. 
Then for any senary sub $m$-gonal form $$a_{j_1}P_m(x_{j_1})+a_{j_2}P_m(x_{j_2})+\cdots+a_{j_6}P_m(x_{j_6})$$ of $\Delta_{m,\mathbf a}(\mathbf x)$ where $1 \le j_1<\cdots <j_6 \le 16$ represents all the multiples of $s(m-2)$ where $s$ is the constant given in Remark \ref{rmk s}.
\end{cor}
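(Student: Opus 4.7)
The plan is to combine Lemma \ref{esc}(iii) with Lemma \ref{sf}: the hypothesis $m>2^{16}+3$ will force the leading sixteen coefficients $(a_1,\ldots,a_{16})$ of any $m$-gonal form that represents $1,\ldots,m-4$ to lie in the finite set $A$ introduced in Remark \ref{rmk s}. Once that is in hand, any senary subform indexed by $1\le j_1<\cdots<j_6\le 16$ is governed by the constant $s(a_{j_1},\ldots,a_{j_6})$ produced by Lemma \ref{sf}, which by construction divides the LCM $s$, and the conclusion drops out.

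The first task is to pin the initial segment inside $A$. From Lemma \ref{esc}(iii) I have $a_1=1$ together with the doubling inequality $a_{i+1}\le a_1+\cdots+a_i+1$ whenever the partial sum is still $<m-4$, plus the requirement $\sum_{i=1}^{n}a_i\ge m-4$. A routine induction gives $a_k\le 2^{k-1}$ and hence $\sum_{i=1}^{n}a_i\le 2^n-1$; combined with $m-3>2^{16}$ this forces $n\ge 17$, so at least sixteen coefficients exist. Moreover, any partial sum through one of the first fifteen indices is bounded by $2^{15}-1<m-4$, so the doubling inequality is active through all fifteen inductive steps. Therefore $(a_1,\ldots,a_{16})\in A$.

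The second task is to invoke Lemma \ref{sf}. Fix indices $1\le j_1<\cdots<j_6\le 16$. Since $(a_1,\ldots,a_{16})\in A$, the 6-subtuple $(a_{j_1},\ldots,a_{j_6})$ is one of the finitely many 6-subtuples over which $s$ is defined as an LCM in Remark \ref{rmk s}, so $s(a_{j_1},\ldots,a_{j_6})\mid s$. Applying Lemma \ref{sf} to this tuple shows that $\sum_{k=1}^{6}a_{j_k}P_m(x_{j_k})$ represents every multiple of $s(a_{j_1},\ldots,a_{j_6})(m-2)$, and in particular every multiple of $s(m-2)$, which is exactly the conclusion.

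There is no real obstacle: the corollary is a packaging statement sitting on top of Lemma \ref{sf}, and both the existence of the sixteen coefficients and the fact that they land inside the combinatorially defined set $A$ are forced by the numerical threshold $m>2^{16}+3$. The only point requiring any care is verifying that the doubling inequality remains the active constraint through all fifteen inductive steps, which is precisely what that threshold is designed to guarantee.
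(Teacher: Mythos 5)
Your proposal is correct and follows essentially the same route as the paper: the paper's proof is a one-line appeal to Lemma \ref{sf}, with the reduction of $(a_1,\dots,a_{16})$ to the finite set $A$ and the divisibility $s(a_{j_1},\dots,a_{j_6})\mid s$ already supplied by Lemma \ref{esc}(iii) and Remark \ref{rmk s}. You have merely written out in full the steps the authors leave implicit, and your verification that the doubling inequality stays active through the first fifteen indices is the right (and only) point needing care.
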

\begin{proof}
One may use Lemma \ref{sf}.
\end{proof}

\begin{rmk}
For an $m$-gonal form $\Delta_{m,\mathbf a}(\mathbf x)=\sum _{i=1}^na_iP_m(x_i)$ with $(a_1,\cdots ,a_{16}) \in A$,
its arbitrary senary subform 
$$a_{j_1}P_m(x_{j_1})+a_{j_2}P_m(x_{j_2})+\cdots+a_{j_6}P_m(x_{j_6})$$ satisfying $1 \le j_1<\cdots <j_6 \le 16$ would represent every multiple of $s(m-2)$ by Corollary \ref{6}.
Therefore the existence of $1 \le i_1<\cdots<i_{10} \le 16$ for which the $(n-6)$-ary subform 
$$a_{i_1}P_m(x_{i_1})+\cdots +a_{i_{10}}P_m(x_{i_{10}})+\sum_{i=17}^na_iP_m(x_i)$$
of $\Delta_{m,\mathbf a}(\mathbf x)$ represents complete residues modulo $s(m-2)$ in some interval $[0,N]$
would be enough to yield that the $m$-gonal form $\Delta_{m,\mathbf a}(\mathbf x)$ represents every positive integer greater than $N$.

For $(a_1,\cdots,a_{16}) \in A$, if there are $10$-subtuple
\begin{equation} (a_{i_1},\cdots,a_{i_{10}}) \end{equation} 
of $(a_1,\cdots, a_{16})$ with $1 \le i_1<\cdots<i_{10} \le 16 $ and a constant 
\begin{equation} C_{(a_1,\cdots,a_{16})}>0 \end{equation} 
which is dependent only on $(a_1,\cdots,a_{16})$ for which the $(n-6)$-ary subform
$$a_{i_1}P_m(x_{i_1})+\cdots +a_{i_{10}}P_m(x_{i_{10}})+\sum_{i=17}^na_iP_m(x_i)$$
of any $m$-gonal form $\Delta_{m,\mathbf a}(\mathbf x)$ (having its first $16$ coefficients as $(a_1,\cdots ,a_{16})$) which represents every positive integer up to $m-4 (\le \gamma_m)$ represents complete residues modulo $s(m-2)$ in $[0,C_{(a_1,\cdots ,a_{16})}(m-2)]$, then we may say that ``the universality of any $m$-gonal form having its first $16$ coefficients as $(a_1,\cdots ,a_{16})$ is characterized by the representability of every positive integer up to $C_{(a_1,\cdots ,a_{16})}(m-2)$''.
\end{rmk}

\vskip 1em

\begin{lem} \label{lu}
For $(a_1,\cdots,a_{16}) \in A$, if there are $10$-subtuple $$\mathbf a_2:=(a_{i_1},\cdots,a_{i_{10}})$$ of $(a_1,\cdots,a_{16})$ with $1 \le i_1<\cdots<i_{10} \le 16$ and vector $$\beta=(\beta_1,\cdots,\beta_{10}) \in \mathbb Z^{10}$$ satisfying $B_{\mathbf a_2}(\alpha,\beta)=a_{i_1}\beta_1+\cdots+a_{i_{10}}\beta_{10}=1$ for which the octonary quadratic $\z$-lattice 
\begin{equation}
\text{($L_{\mathbf a_2, \beta},Q_{\mathbf a_2})$ is locally even universal,}
\end{equation} 
then there is a constant $$C_{(a_1,\cdots, a_{16})} >0 $$ for which 
having its first $16$ coefficients as $(a_1,\cdots ,a_{16}) \in A$ $m$-gonal form  
$$\Delta_{m,\mathbf a}(\mathbf x)=\sum_{i=1}^{16}a_iP_m(x_i)+\sum_{i=17}^{n}a_iP_m(x_i)$$ which represents every postivie integer up to $m-4$ represents every integer greater than $C_{(a_1,\cdots,a_{16})}(m-2)$. 
\end{lem}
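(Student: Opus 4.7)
Set $\{j_1, \ldots, j_6\} := \{1, \ldots, 16\} \setminus \{i_1, \ldots, i_{10}\}$ and write $\Delta_{m, \mathbf a} = G + F$ where $G(\mathbf x) := \sum_{k=1}^{6} a_{j_k} P_m(x_{j_k})$ and $F(\mathbf x) := \Delta_{m, \mathbf a_2}(x_{i_1}, \ldots, x_{i_{10}}) + \sum_{i=17}^{n} a_i P_m(x_i)$. By Corollary \ref{6}, $G$ represents every multiple of $s(m-2)$, so it suffices to produce, for each residue class modulo $s(m-2)$, an $F$-representation of some integer in that class inside a window $[0, C(m-2)]$: any target $N > C(m-2)$ then decomposes as $N = V + (N-V)$ with $V \in [0, C(m-2)]$ an $F$-value of the class $N \bmod s(m-2)$ and $N - V$ a nonnegative multiple of $s(m-2)$ realized by $G$.

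The central step is to parametrize the $10$-ary piece via $\mathbf x' = t\beta + \mathbf y$ with $t \in \mathbb{Z}$ and $\mathbf y \in L_{\mathbf a_2, \beta}$. Then $B_{\mathbf a_2}(\alpha, \mathbf x') = t$ (using $B_{\mathbf a_2}(\alpha, \beta) = 1$) and $Q_{\mathbf a_2}(\mathbf x') = ct^2 + Q_{\mathbf a_2}(\mathbf y)$ for $c := Q_{\mathbf a_2}(\beta)$; moreover $c$ is odd because $c - 1 = Q_{\mathbf a_2}(\beta) - B_{\mathbf a_2}(\alpha, \beta) = \sum_k a_{i_k} \beta_k (\beta_k - 1)$ is even. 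Substituting into the identity (\ref{b}) and using that both $t(ct-1)$ and $Q_{\mathbf a_2}(\mathbf y)$ are even, we obtain
\[
\Delta_{m, \mathbf a_2}(t\beta + \mathbf y) = \Bigl(\tfrac{t(ct-1)}{2} + \tfrac{Q_{\mathbf a_2}(\mathbf y)}{2}\Bigr)(m-2) + t.
\]
Since $\rank L_{\mathbf a_2, \beta} = 8 \ge 5$ and $L_{\mathbf a_2, \beta}$ is locally even universal by hypothesis, Theorem \ref{hkk} yields a constant $c_0 = c_0(\mathbf a_2, \beta)$ such that $Q_{\mathbf a_2}$ represents every even integer $\ge c_0$ on $L_{\mathbf a_2, \beta}$. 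For each fixed $t$, the values of $\Delta_{m, \mathbf a_2}(t\beta + \mathbf y)$ therefore include the full arithmetic progression $\{t + k(m-2) : k \ge k_0(t)\}$ for some $k_0(t)$ depending only on $c$, $c_0$, and $t$.

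To cover every residue class modulo $s(m-2)$, I will combine these APs with the hypothesis that $\Delta_{m, \mathbf a}$ represents every integer in $[1, m-4]$. By Lemma \ref{esc}(iii) together with $P_m(-1) = m - 3$, each $t_0 \in \{0, 1, \ldots, m-3\}$ is the value of the full form at a $\{0, 1, -1\}$-vector. Writing an arbitrary residue class modulo $s(m-2)$ as $t_0 + j(m-2)$ with $t_0 \in [0, m-3]$ and $j \in [0, s-1]$, I will build an $F$-representation by combining, on disjoint supports, such a small $\{0, 1, -1\}$-vector confined to the $(n-6)$-ary coordinates together with a contribution $k(m-2)$ drawn from the $t = 0$ AP, where $k \ge k_0(0)$ is chosen congruent to $j$ modulo $s$. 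The resulting $F$-value is at most $(m-3) + (k_0(0) + s - 1)(m-2) \le C_{(a_1, \ldots, a_{16})} (m-2)$ for a suitable constant.

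The hard part will be guaranteeing, for every $t_0 \in [0, m-3]$, a $\{0, 1, -1\}$-vector representing $t_0$ whose support lies entirely within the $(n-6)$-ary coordinates $\{i_1, \ldots, i_{10}\} \cup \{17, \ldots, n\}$: the subset-sum supplied by Lemma \ref{esc}(iii) may involve one of the senary coordinates $\{j_1, \ldots, j_6\}$. When this happens, I will either absorb the offending senary contribution into the $G$-summand of the decomposition (shifting the target multiple of $s(m-2)$ by a bounded amount) or exploit the coordinates $\{a_i\}_{i \ge 17}$, themselves constrained by Lemma \ref{esc}(iii)'s prefix-sum condition while $\sum_{j \le i} a_j < m - 4$, to furnish the missing subset sums. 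This combinatorial transfer depends only on $(a_1, \ldots, a_{16})$ and on the chosen $10$-subtuple, so the resulting constant $C_{(a_1, \ldots, a_{16})}$ is independent of $m$.
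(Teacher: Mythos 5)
Your setup coincides with the paper's up to the point you yourself flag as ``the hard part,'' and that is exactly where the argument breaks. The obstruction: you need, for each $t_0\in[0,m-3]$, an $F$-value congruent to $t_0$ modulo $m-2$ inside a window of size $O(m)$, and you try to obtain the residue from a $\{0,\pm 1\}$-vector supported off the six omitted coordinates. But the hypothesis only supplies a $\{0,\pm1\}$-representation of $t_0$ on all $n$ coordinates, and deleting six members of the chain $a_1=1$, $a_{i+1}\le 1+\sum_{j\le i}a_j$ can destroy subset-sum completeness (for instance $n=16$ with the omitted six containing the largest coefficients, so the kept ten sum to far less than $m-4$ and cannot reach most residues). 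Neither proposed repair closes this: Corollary \ref{6} produces multiples of $s(m-2)$ from the senary part via vectors with $B_{\mathbf a}(\alpha_6,\mathbf x)=0$, so it says nothing about that senary form representing $g+s(m-2)\mathbb Z$ for a nonzero offset $g$ (that is a shifted-lattice representation problem the paper never addresses); and coordinates $i\ge 17$ need not exist at all. Taking $t=t_0$ in your own parametrization does not help either, since the quadratic term $ct_0^2$ makes the value of order $m^3$.

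The paper closes exactly this gap using the ingredient you set up but then discarded: nonzero $t$. Given $r\in[0,m-3]$, split the hypothesized representation $\Delta_{m,\mathbf a}(\mathbf x(r))=r$ as $r=r_1+r_2$ with $r_1=\sum_{i\le 16}a_iP_m(x_i(r))$ and $r_2=\sum_{i\ge 17}a_iP_m(x_i(r))$. The key observation is that $r_1$ lies in the finite set $\{-1,0,1,\dots,2^{16}-1\}$ modulo $m-2$: all entries are in $\{0,\pm1\}$, and at most one value $P_m(-1)=m-3$ can occur, forcing $r_1\le 2^{16}-1$ or $r_1=m-3\equiv -1$. One keeps the tail $r_2$ untouched on coordinates $\ge 17$ (disjoint support for free) and realizes the residue $r_1$ on the ten-subtuple by taking $t=r_1'$ in $\Delta_{m,\mathbf a_2}(\mathbf x+r_1'\beta)$; since $|r_1'|\le 2^{16}$ the term $r_1'^2Q_{\mathbf a_2}(\beta)$ is bounded independently of $m$, and Theorem \ref{hkk} supplies $\mathbf x(t,r_1')\in L_{\mathbf a_2,\beta}$ with $Q_{\mathbf a_2}(\mathbf x(t,r_1'))$ in any prescribed even class modulo $2s$, yielding every $t(m-2)+r_1+r_2\equiv t(m-2)+r$ within $C_{(a_1,\dots,a_{16})}(m-2)$. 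This is the whole point of requiring a vector $\beta$ with $B_{\mathbf a_2}(\alpha,\beta)=1$: it converts an arbitrary bounded integer $r_1$, not merely a signed subset sum of the kept coefficients, into a residue modulo $m-2$ on the ten-subtuple. Replace your covering step by this head/tail splitting and the rest of your outline goes through.
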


\begin{proof}
In virtue of Corollary \ref{6}, it would be enough to show the existence of a constant $C_{(a_1,\cdots,a_{16})}>0$ for which the $(n-6)$-ary sub $m$-gonal form
$$a_{i_1}P_m(x_{i_{1}})+\cdots+a_{i_{10}}P_m(x_{i_{10}})+\sum\limits_{i=17}^n a_iP_m(x_i)$$ represents complete residues modulo $s(m-2)$ in $[0,C_{(a_1,\cdots,a_{16})}(m-2)]$.
In other words, we show that for each residue $N$ modulo $s(m-2) \Z$, there exists an integer solution $(x_{i_1},\cdots, x_{i_{10}}, x_{17},\cdots,x_n) \in \Z^{n-6}$ for 
$$\begin{cases}\Delta_{m,\mathbf a_2}(x_{i_1},\cdots, x_{i_{10}})+\sum \limits_{i=17}^na_iP_m(x_i) \equiv N \pmod{s(m-2)}\\
\Delta_{m,\mathbf a_2}(x_{i_1},\cdots, x_{i_{10}})+\sum \limits_{i=17}^na_iP_m(x_i) \le C_{(a_1,\cdots,a_{16})}(m-2).\end{cases}$$

For each $0\le r \le m-3$, we may take a vector $\mathbf x(r)=(x_1(r),\cdots, x_n(r)) \in \mathbb Z^n$ such that $$\Delta_{m,\mathbf a}(\mathbf x(r))=r$$ by assumption. 
We put $$\begin{cases}r_1:=\sum \limits_{i=1}^{16}a_iP_m(x_i(r)) \\ r_2:=\sum \limits_{i=17}^{n}a_iP_m(x_i(r)).\end{cases}$$
Since the smallest non-zero $m$-gonal number is $P_m(1)=1$ and the next one is $P_m(-1)=m-3$, if $m>2^{16}-3$, then we have that
$$0 \le r_1 \le a_1+\cdots+a_{16}\le 2^{16}-1 \ \quad \text{ or } \ \quad r_1=m-3 \equiv -1 \pmod{m-2}.$$
Therefore $r_1$ may be congruent to $-1,0,1,2,\cdots, 2^{16}-2,$ or $ 2^{16}-1$ modulo $m-2$.

For $\mathbf x \in L_{\mathbf a_2,\beta}$ and $r_1 \in \mathbb Z$, we have 
\begin{equation}\label{eq2} \Delta_{m,\mathbf a_2}(\mathbf x + r_1 \beta)=\frac{m-2}{2}\left\{Q_{\mathbf a_2}(\mathbf x)+r_1^2Q_{\mathbf a_2}(\beta)-r_1\right\}+r_1.\end{equation}
Since the quadratic $\z$-lattice $L_{\mathbf a_2,\beta}$ with $\rank(L_{\mathbf a,\beta})=8$ is locally even universal, $L_{\mathbf a_2,\beta}$ represents every sufficiently large even integer  by Theorem \ref{hkk}. For $$r_1':=\begin{cases}-1 & \text{ if } r_1=m-3 \\ r_1 & \text{ otherwise, }\end{cases}$$
with $r_1\equiv r_1' \pmod{m-2}$ and  $-1\le r_1' \le 2^{16}-1$ and $0\le t \le s-1$, we may take a vector $\mathbf x (t,r_1') \in L_{\mathbf a_2,\beta}$ such that 
$$Q_{\mathbf a_2}(\mathbf x (t,r_1')) \equiv 2t - r_1'^2Q_{\mathbf a_2}(\beta)+|r_1'| \pmod{2s}$$
since $L_{\mathbf a_2,\beta}$ represents every sufficiently large even integer.
By applying this to (\ref{eq2}), we obtain that
$$\Delta_{m,\mathbf a_2}(\mathbf x (t,r_1')+r_1'\beta) \equiv t(m-2)+r_1 \pmod{s(m-2)}$$
and so for $0 \le t \le s-1$ and $0 \le r \le m-3$, 
\begin{align*}
\Delta_{m,\mathbf a_2}(\mathbf x (t,r_1')+r_1'\beta)+\sum \limits_{i=17}^na_iP_m(x_i(r)) & \equiv \  t(m-2)+r_1 \ + \ r_2 \\
& = \ t(m-2)+r \pmod{s(m-2)}.
\end{align*}
Namely, $\Delta_{m,\mathbf a_2}(x_{i_1},\cdots, x_{i_{10}})+\sum_{i=17}^n a_iP_m(x_i)$ represents complete residues modulo $s(m-2)$.
Now let $C_{(a_1,\cdots,a_{16})}$ be the constant 
$$\max  \left\{\frac{Q_{{\mathbf a}_2}(\mathbf x (t,r'))+(r')^2Q_{{\mathbf a}_2}(\beta)-r'}{2}|0 \le t \le s-1, -1 \le r' \le 2^{16}-1\right\}+1.$$
Then for each $0\le t \le s-1$ and $0 \le r \le m-3$, we finally obtain 
$$
\Delta_{m,\mathbf a_2}(\mathbf x (t,r_1') +r_1'\beta)+\sum \limits_{i=17}^na_iP_m(x_i(r)) \equiv t(m-2)+r \pmod{s(m-2)}$$
and 
\begin{align*}
&\Delta_{m,\mathbf a_2}(\mathbf x(t,r_1')+r_1'\beta)+\sum \limits_{i=17}^na_iP_m(x_i(r))  \\
=&\frac{m-2}{2}(Q_{\mathbf a_2}(\mathbf x(t,r_1'))+r_1'^2Q_{\mathbf a_2}(\beta)-r_1')+r_1+r_2\\
\le &(m-2) \frac{Q_{\mathbf a_2}(\mathbf x(t,r_1'))+r_1'^2Q_{\mathbf a_2}(\beta)-r_1'}{2}+(m-2)\\
\le &C_{(a_1,\cdots,a_{16})}(m-2).
\end{align*}
By the construction, $C_{(a_1,\cdots,a_{16})}$ is clearly dependent only on $(a_1,\cdots,a_{16})$.
This completes the proof.
\end{proof}

\begin{rmk}
Based on Lemma \ref{lu}, what should we do now is that to show that for each candidate $(a_1,\cdots, a_{16}) \in A$, we could take $10$-subtuple $$\mathbf a_2:=(a_{i_1},\cdots, a_{i_{10}})$$ with $1\le i_1 < \cdots < i_{10}\le 16$ of $\mathbf a=(a_1,\cdots, a_n)$ and a vector 
$$\beta=(\beta_1,\cdots,\beta_{10}) \in \mathbb Z^{10}$$ 
satisfying $B_{\mathbf a_2}(\alpha,\beta)=a_{i_1}\beta_1+\cdots+a_{i_{10}}\beta_{10}=1$ for which 
\begin{equation}
\text{($L_{\mathbf a_2, \beta},Q_{\mathbf a_2})$ is locally even universal.}
\end{equation}

The sublattice $(L_{\mathbf a_2, \beta},Q_{\mathbf a_2})$ of $(\mathbb Z^{10}, Q_{\mathbf a_2})$ is orthogonal complement $(\mathbb Z \alpha+\mathbb Z \beta)^{\perp}=\{\mathbf x\in \mathbb Z^{10}|B_{\mathbf a_2}(\mathbf x,\alpha)=B_{\mathbf a_2}(\mathbf x,\beta)=0\}$.
Determining the structure of an orthogonal complement over local ring is much easier than over global ring.
Fortunately, following lemma says that we may interchange the order of localization and orthogonal complementation.
\end{rmk}

\vskip 1em


\begin{lem} \label{p}
Let $L$ be a $\mathbb Z$-lattice and $K$ be a $\z$-sublattice of $L$.
Then $(K\otimes \mathbb Z_p)^{\perp}=K^{\perp}\otimes \mathbb Z_p$.
\end{lem}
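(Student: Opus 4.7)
The plan is to interpret $K^\perp$ as the kernel of a natural adjoint map and then exploit that $\mathbb{Z}_p$ is flat over $\mathbb{Z}$, so that kernels commute with the base change $-\otimes_{\mathbb{Z}} \mathbb{Z}_p$. Since $K$ is a finitely generated torsion-free $\mathbb{Z}$-module (a submodule of the free $\mathbb{Z}$-module $L$), it is free of finite rank, and the adjoint
\[
\phi_K \colon L \longrightarrow \operatorname{Hom}_{\mathbb{Z}}(K,\mathbb{Z}), \qquad x \longmapsto B(x,\,\cdot\,)
\]
is well-defined with $K^\perp = \ker \phi_K$. Analogously, $(K \otimes \mathbb{Z}_p)^\perp$ is the kernel of the $\mathbb{Z}_p$-adjoint $\psi \colon L \otimes \mathbb{Z}_p \to \operatorname{Hom}_{\mathbb{Z}_p}(K \otimes \mathbb{Z}_p,\, \mathbb{Z}_p)$ attached to the $\mathbb{Z}_p$-bilinear extension of $B$.

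The inclusion $K^\perp \otimes \mathbb{Z}_p \subseteq (K \otimes \mathbb{Z}_p)^\perp$ is immediate from $\mathbb{Z}_p$-bilinearity: for $x \in K^\perp$, $y \in K$, and $\lambda,\mu \in \mathbb{Z}_p$, one has $B(x \otimes \lambda,\, y \otimes \mu) = \lambda\mu\, B(x,y) = 0$, and arbitrary elements of $K \otimes \mathbb{Z}_p$ are $\mathbb{Z}_p$-linear combinations of such simple tensors.

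For the reverse inclusion, I would invoke the canonical isomorphism
\[
\operatorname{Hom}_{\mathbb{Z}}(K,\mathbb{Z}) \otimes_{\mathbb{Z}} \mathbb{Z}_p \;\cong\; \operatorname{Hom}_{\mathbb{Z}_p}(K \otimes_{\mathbb{Z}} \mathbb{Z}_p,\, \mathbb{Z}_p),
\]
which holds precisely because $K$ is finitely generated free; one verifies it by fixing a $\mathbb{Z}$-basis of $K$ and comparing both sides coordinate by coordinate. Under this identification the base-changed map $\phi_K \otimes 1$ becomes $\psi$, so $\ker(\phi_K \otimes 1)$ coincides with $(K \otimes \mathbb{Z}_p)^\perp$. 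Applying the flatness of $\mathbb{Z}_p$ over $\mathbb{Z}$ to the left-exact sequence $0 \to K^\perp \to L \xrightarrow{\phi_K} \operatorname{Hom}_{\mathbb{Z}}(K,\mathbb{Z})$ yields $K^\perp \otimes \mathbb{Z}_p = \ker(\phi_K \otimes 1)$, and combining these two equalities finishes the proof.

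The only point that truly deserves care is the Hom--tensor identification displayed above; since $K$ is free of finite rank over a PID the check is routine and no serious obstacle arises. Everything else is a formal consequence of flatness of $\mathbb{Z}_p/\mathbb{Z}$.
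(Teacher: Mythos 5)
Your proof is correct, but it takes a genuinely different route from the paper's. The paper argues element-wise: the inclusion $K^{\perp}\otimes\mathbb Z_p\subseteq(K\otimes\mathbb Z_p)^{\perp}$ is noted as obvious, and for the converse it takes $v\in(K\otimes\mathbb Z_p)^{\perp}$, writes $v=\alpha^{-1}(\alpha v)$ for an integer $\alpha$ prime to $p$ with $\alpha v\in L$, observes that $B(\alpha v,K)=0$, and concludes $v\in K^{\perp}\otimes\mathbb Z_p$. You instead realize $K^{\perp}$ as the kernel of the adjoint $L\to\operatorname{Hom}_{\mathbb Z}(K,\mathbb Z)$ and transport everything through flatness of $\mathbb Z_p$ over $\mathbb Z$ together with the Hom--tensor isomorphism for the finitely generated free module $K$. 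Your version is longer and less elementary, but it buys robustness: it works verbatim for any flat base change, and it sidesteps the one delicate point in the paper's argument, namely that a general element of $L\otimes\mathbb Z_p$ (the completion, as opposed to the localization $\mathbb Z_{(p)}$) need not land in $L$ after multiplication by a single integer prime to $p$; the denominator-clearing step literally proves the statement over $\mathbb Z_{(p)}$ and tacitly relies on density/continuity (or precisely the flatness you invoke) to pass to $\mathbb Z_p$. The only cosmetic caveat in your write-up is that the paper's $B(x,y)=\tfrac12\bigl(Q(x+y)-Q(x)-Q(y)\bigr)$ may a priori take values in $\tfrac12\mathbb Z$ rather than $\mathbb Z$; replacing $B$ by $2B$, or targeting $\operatorname{Hom}_{\mathbb Z}(K,\tfrac12\mathbb Z)$, changes nothing since the orthogonal complements are unaffected.
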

\begin{proof}
Clearly, $(K\otimes \mathbb Z_p)^{\perp}\supseteq K^{\perp}\otimes \mathbb Z_p$ holds.
Let $v \in (K\otimes \mathbb Z_p)^{\perp} =\{v \in L \otimes \mathbb Z_p | B(v,K\otimes \mathbb Z_p)=0\}$.
Then $v$ can be written as $v=\alpha^{-1}(\alpha v)$ where $\alpha \in \mathbb Z $ with $p \nmid \alpha$ and $\alpha v \in L$.
Since $B(\alpha v,K)=0$, we have $v=\alpha^{-1}(\alpha v) \in K^{\perp}\otimes \mathbb Z_p$.
\end{proof}
\vskip 1em

\section{Local structure of orthogonal complement}

Throughout this section, we write
$$\mathbf a_2=(a_{i_1},\cdots, a_{i_{10}})$$
and assume that $\beta \in \z^{10}$ is a vector with
$$B_{\mathbf a_2}(\alpha,\beta)=1.$$

In this section, we study some criteria to figure out the local structure of $\Z_p$-sublattice $$L_{\mathbf a_2,\beta}\otimes \Z_p=(\z_p\alpha+\z_p\beta)^{\perp}$$ of $(\Z_p^{10},Q_{\mathbf a_2})$. 
The following lemma suggests an effective direction to discriminate whether $L_{\mathbf a_2, \beta}$ is locally even universal.

\begin{prp} \label{uni}
Suppose $p$ be an odd prime.

\begin{itemize}
\item[(1)]
If there are at least 6 units of $\Z_p$ (by admitting a recursion) in $\{a_{i_1}, \cdots, a_{i_{10}}\}$, then $L_{\mathbf a_2, \beta} \otimes \mathbb Z_p$ is (even) universal for $\beta \in \Z^{10}$ with $B_{\mathbf a_2}(\alpha, \beta)=1$.


\item[(2)]
If there are at least 5 units and at least 2 prime elements of $\Z_p$ (by admitting a recursion) in $\{a_{i_1}, \cdots, a_{i_{10}}\}$, then $L_{\mathbf a_2, \beta} \otimes \mathbb Z_p$ is (even) universal for $\beta \in \Z^{10}$ with $B_{\mathbf a_2}(\alpha, \beta)=1$.

\end{itemize}
\end{prp}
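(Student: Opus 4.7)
My plan is to analyze the Jordan decomposition of $L_{\mathbf{a}_2,\beta}\otimes \mathbb{Z}_p$ by separating the unit block from the $p\mathbb{Z}_p$-block of $\mathbb{Z}_p^{10}$, and to exhibit inside it an orthogonal direct summand that is manifestly universal over $\mathbb{Z}_p$. Since $p$ is odd, ``even universal'' coincides with ``universal'', so it suffices to realize one of two sufficient structures: (a) a unimodular $\mathbb{Z}_p$-sublattice of rank $\ge 3$ (automatically an orthogonal summand, and universal because a non-degenerate ternary form over $\mathbb{F}_p$ represents every residue class and Hensel's lemma lifts the representation to $\mathbb{Z}_p$); or, as a fallback in part of case (2), (b) a summand of the shape $\langle u_1,u_2\rangle\perp p\langle v_1,v_2\rangle$ with both binary forms non-degenerate, which represents every unit via the first piece and every element of $p\mathbb{Z}_p$ via the second.

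After reordering the coordinates I may assume $a_{i_1},\ldots,a_{i_k}\in\mathbb{Z}_p^{\times}$ and $a_{i_{k+1}},\ldots,a_{i_{10}}\in p\mathbb{Z}_p$, and write $\mathbb{Z}_p^{10}=V_0\perp V_{\ge 1}$ with $V_0$ the unimodular unit block of rank $k$. Decompose $\alpha=\alpha_0+\alpha_{\ge 1}$ and $\beta=\beta_0+\beta_{\ge 1}$. Since $B_{\mathbf{a}_2}(\alpha_{\ge 1},\beta_{\ge 1})\in p\mathbb{Z}_p$, the normalization $B_{\mathbf{a}_2}(\alpha,\beta)=1$ forces $B_{\mathbf{a}_2}(\alpha_0,\beta_0)\in\mathbb{Z}_p^{\times}$. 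For any $x\in V_0$ the two conditions defining $L_{\mathbf{a}_2,\beta}\otimes\mathbb{Z}_p$ reduce to $B_{\mathbf{a}_2}(\alpha_0,x)=B_{\mathbf{a}_2}(\beta_0,x)=0$, so $L_{\mathbf{a}_2,\beta}\otimes\mathbb{Z}_p$ contains the orthogonal complement $K_0$ of $M_0:=\mathbb{Z}_p\alpha_0+\mathbb{Z}_p\beta_0$ inside $V_0$. The identity $\det G_0=\sum_{i<j}a_{i_i}a_{i_j}(\beta_i-\beta_j)^2$ for the Gram matrix $G_0$ of $M_0$, together with the fact that its off-diagonal entry is a unit, shows that the saturation of $M_0$ in $V_0$ has Jordan type $\langle u\rangle\perp\langle p^e v\rangle$ with $e=v_p(\det G_0)$.

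For case (1), $k\ge 6$: if $e=0$, $M_0$ is a unimodular direct summand of $V_0$ and $K_0$ is unimodular of rank $\ge 4$. If $e\ge 1$, split off a unit-norm generator $\gamma_1$ of $M_0$, so $V_0=\mathbb{Z}_p\gamma_1\perp W$ with $W$ unimodular of rank $\ge 5$; after saturating, the other generator $\gamma_2$ of $M_0$ lies in $W$, and a dual vector $\gamma_2^{\ast}\in W$ with $B_{\mathbf{a}_2}(\gamma_2,\gamma_2^{\ast})=1$ generates, together with $\gamma_2$, a unimodular rank-$2$ summand of $W$. The orthogonal complement of that summand in $W$ is unimodular of rank $\ge 3$, sits inside $K_0\subseteq L_{\mathbf{a}_2,\beta}\otimes\mathbb{Z}_p$, and realizes structure (a).

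For case (2), $k\ge 5$ with at least two $a_{i_j}$ of $p$-adic valuation exactly $1$: the case-(1) argument still yields (a) whenever $e=0$, or in the degenerate situation $\beta-c^{\ast}\alpha\in V_{\ge 1}$ for some $c^{\ast}\in\mathbb{Z}_p$, where $K_0$ coincides with the orthogonal complement of $\alpha_0$ in $V_0$ (rank $4$) and contains a rank-$\ge 3$ unimodular sublattice by the same dual-vector trick applied to $\alpha_0$. The genuinely difficult sub-case is $e\ge 1$ with $\bar\alpha_0,\bar\beta_0$ linearly independent in $V_0/pV_0$; here $V_0$ supplies only a rank-$2$ unimodular summand of $K_0$, and I realize structure (b) by using the two prime coordinates. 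Choosing two unit indices $i_a,i_b$ and the two prime indices $i_c,i_d$, the system $B_{\mathbf{a}_2}(\alpha,\cdot)=B_{\mathbf{a}_2}(\beta,\cdot)=0$ restricted to the rank-$4$ sublattice $\bigoplus_{s\in\{a,b,c,d\}}\mathbb{Z}_pe_{i_s}$ has a $2$-dimensional $\mathbb{Z}_p$-solution space; a mod-$p^2$ analysis shows the solutions have unit-coordinate entries in $p\mathbb{Z}_p$ and prime-coordinate entries spanning $\mathbb{Z}_p^2$ modulo $p$, producing a $p$-modular rank-$2$ sublattice isometric (up to units) to $p\langle a_{i_c}/p,a_{i_d}/p\rangle$ and orthogonal to the rank-$2$ unimodular summand coming from $V_0$. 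The main technical obstacle is verifying that the choice of indices $\{i_a,i_b\}$ can be made so that both the unit and prime pieces are non-degenerate for every $\beta$ with $B_{\mathbf{a}_2}(\alpha,\beta)=1$; once done, the direct summand $\langle u_1,u_2\rangle\perp p\langle v_1,v_2\rangle$ is universal and completes case (2).
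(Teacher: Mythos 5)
Your overall strategy --- exhibit inside $L_{\mathbf a_2,\beta}\otimes\mathbb Z_p$ either a unimodular sublattice of rank $\ge 3$ or a summand of type $\langle u_1,u_2\rangle\perp p\langle v_1,v_2\rangle$ and invoke their universality over non-dyadic $\mathbb Z_p$ --- is the same as the paper's, and your case (1) and the easy sub-cases of (2) are essentially sound. But there is a genuine gap at the crux of case (2): in the sub-case $e\ge 1$ with $\bar\alpha_0,\bar\beta_0$ independent modulo $p$, the construction of the $p$-modular binary piece rests on a ``mod-$p^2$ analysis'' that you only assert, and you explicitly leave unresolved ``the main technical obstacle'' of choosing the unit indices $i_a,i_b$ so that both pieces are non-degenerate for every admissible $\beta$. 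That obstacle is the heart of the statement; without it, part (2) is not proved (and the claimed orthogonality of the $p$-modular piece to the unimodular summand inside $V_0$ is also not automatic, since its unit-coordinate entries are only in $p\mathbb Z_p$, not zero). The paper avoids all of this by a different pivot: when $\mathbb Z_p\alpha+\mathbb Z_p\beta$ is \emph{not} unimodular, $Q_{\mathbf a_2}(\alpha)Q_{\mathbf a_2}(\beta)-1\in p\mathbb Z_p$ forces $Q_{\mathbf a_2}(\alpha)\in\mathbb Z_p^{\times}$, so one may split $\mathbb Z_p\alpha+\mathbb Z_p\beta=\mathbb Z_p\alpha\perp\mathbb Z_p(\beta-Q_{\mathbf a_2}(\alpha)^{-1}\alpha)$, extend $\alpha$ to an orthogonal basis $\mathbf v_1,\dots,\mathbf v_{10}$ with $Q(\mathbf v_2),\dots,Q(\mathbf v_5)\in\mathbb Z_p^{\times}$ and $Q(\mathbf v_6),Q(\mathbf v_7)\in p\mathbb Z_p^{\times}$, write $\beta-Q(\alpha)^{-1}\alpha=\sum p^{t_i}u_i\mathbf v_i$, and compare $t_2$ with $t_6$: if $t_2\le t_6$ one obtains the binary-unimodular-plus-binary-$p$-modular summand, and if $t_2>t_6$ the explicit vectors $\mathbf w_k=\mathbf v_k-p^{t_k-t_6}u_ku_6^{-1}Q(\mathbf v_k)Q(\mathbf v_6)^{-1}\mathbf v_6$, $2\le k\le 5$, span a quaternary unimodular sublattice of the complement. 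This valuation comparison is exactly the case split your proposal needs and does not supply.

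Two smaller slips. First, a dual vector $\gamma_2^{\ast}$ with $B_{\mathbf a_2}(\gamma_2,\gamma_2^{\ast})=1$ does \emph{not} automatically span with $\gamma_2$ a unimodular binary summand: the determinant $Q(\gamma_2)Q(\gamma_2^{\ast})-1$ can lie in $p\mathbb Z_p$ when $Q(\gamma_2)$ is a unit, so you must either treat the unit-norm case by splitting off $\mathbb Z_p\gamma_2$ directly or choose $\gamma_2^{\ast}$ more carefully. Second, your trichotomy in case (2) omits the sub-case where $\bar\beta_0$ is proportional to $\bar\alpha_0$ modulo $p$ without $\beta_0$ being an exact $\mathbb Z_p$-multiple of $\alpha_0$; there $M_0$ is not saturated, the identity $e=v_p(\det G_0)$ no longer describes the Jordan type of the saturation, and the argument must be rerun on the saturated lattice.
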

\begin{proof}
Recall that $(L_{\mathbf a_2, \beta},Q_{\mathbf a_2})$ is the orthogonal complement $(\mathbb Z \alpha + \mathbb Z \beta)^{\perp}$ in $(\mathbb Z^{10},Q_{\mathbf a_2})$. 

(1) If $\mathbb Z_p \alpha + \mathbb Z_p \beta$ is unimodular, then we may use 82:15, 91:9 in \cite{O} and Lemma \ref{p} to see that $L_{\mathbf a_2,\beta}\otimes \mathbb Z_p=(\mathbb Z_p \alpha + \mathbb Z_p \beta)^{\perp}$ contains a quaternary unimodular $\mathbb Z_p$-sublattice. This implies that $(L_{\mathbf a_2,\beta}\otimes \mathbb Z_p,Q_{\mathbf a_2})$ is (even) universal from 92:1b in \cite{O}.

Now assume that $\mathbb Z_p \alpha + \mathbb Z_p \beta$ is not unimodular.
Since its discriminant $$d(\Z_p\alpha+\Z_p\beta)=Q_{\mathbf a_2}(\alpha)Q_{\mathbf a_2}(\beta)-B_{\mathbf a_2}(\alpha,\beta)^2=Q_{\mathbf a_2}(\alpha)Q_{\mathbf a_2}(\beta)-1$$ is not unit of $\z_p$, we have that $Q_{\mathbf a_2}(\alpha) \in \Z_p^{\times}$.
Hence $\Z_p\alpha+\Z_p\beta$ is diagonalized as $\mathbb Z_p \alpha \perp \mathbb Z_p(\beta-Q_{\mathbf a_2}(\alpha)^{-1}\alpha)$, namely, $$\mathbb Z_p \alpha + \mathbb Z_p \beta=\mathbb Z_p \alpha \perp \mathbb Z_p(\beta-Q_{\mathbf a_2}(\alpha)^{-1}\alpha).$$
Since $p$ is odd prime, we may take an orthogonal $\Z_p$-basis $\mathbf v_1,\mathbf v_2,\cdots,\mathbf v_{10}$ of $(\Z_p^{10},Q_{\mathbf a_2})$ for which $$\begin{cases}\mathbf v_1=\alpha \\ Q_{\mathbf a_2}(\mathbf v_2),\cdots,Q_{\mathbf a_2}(\mathbf v_6) \in \Z_p^{\times}.\end{cases}$$
For $B_{\mathbf a_2}(\mathbf v_1, \beta-Q_{\mathbf a_2}(\alpha)^{-1}\alpha)=0$, we may write 
$$\beta-Q_{\mathbf a_2}(\alpha)^{-1}\alpha=p^{t_2}u_2\mathbf v_2+p^{t_3}u_3\mathbf v_3+\cdots+p^{t_{10}}u_{10}\mathbf v_{10}$$ for some $t_i \in \mathbb N_0$ and $u_i \in \Z_p^{\times}$.
Without loss of generality, let $0\le t_2\le t_3\le \cdots \le t_6$.
And put $M_{\beta}:=\Z_p \mathbf v_2+\cdots+\Z_p\mathbf v_6$, $\widetilde{\beta}:=\sum_{k=2}^6p^{t_k-t_2}u_k\mathbf v_k \in \mathbb Z_p^{10}$ and $\widetilde{\gamma}:=\mathbf v_2 \in \mathbb Z_p^{10}$.\\
If ${Q_{\mathbf a_2}}(\widetilde{\beta}) \in \mathbb Z_p^{\times}$, then $M_{\beta} \cap (\Z_p \widetilde{\beta})^{\perp}$ is a quaternary unimodular $\mathbb Z_p$-lattice by 82:15 and 91:9 in \cite{O}.
So, by Lemma \ref{p} and 92:1b in \cite{O}, $\Z_p$-sublattice $M_{\beta}\cap (\Z_p \widetilde{\beta})^{\perp}$ of $L_{\mathbf a_2,\beta}\otimes \Z_p$ is (even) universal, yielding that $L_{\mathbf a_2,\beta}\otimes \Z_p$ is also (even) universal.\\
If ${Q_{\mathbf a_2}}(\widetilde{\beta}) \in p\mathbb Z_p$, then $M_{\beta} \cap (\mathbb Z_p \widetilde{\beta}+\mathbb Z_p \widetilde{\gamma})^{\perp}$ is a ternary unimodular $\mathbb Z_p$-lattice by 82:15 and 91:9 in \cite{O}. 
So, by using Lemma \ref{p} and 92:1b in \cite{O}, we may see that the $\Z_p$-sublattice $M_{\beta} \cap (\mathbb Z_p \widetilde{\beta}+\mathbb Z_p \widetilde{\gamma})^{\perp}$ of $L_{\mathbf a_2,\beta}\otimes \Z_p$ is (even) universal, yielding that $L_{\mathbf a_2,\beta}\otimes \Z_p$ is also (even) universal.

(2) If $\mathbb Z_p \alpha + \mathbb Z_p \beta$ is unimodular, then $L_{\mathbf a_2, \beta}\otimes \mathbb Z_p$ contains a ternary unimodular $\mathbb Z_p$-sublattice by 82:15, 91:9 in \cite{O} and Lemma \ref{p}. 
This implies that $L_{\mathbf a_2, \beta}\otimes \mathbb Z_p$ is (even) universal by 92:1b in \cite{O}.

Assume that $\mathbb Z_p \alpha + \mathbb Z_p \beta$ is not unimodular.
Since $d(\Z_p\alpha+\Z_p\beta)=Q_{\mathbf a_2}(\alpha)Q_{\mathbf a_2}(\beta)-B_{\mathbf a_2}(\alpha,\beta)^2=Q_{\mathbf a_2}(\alpha)Q_{\mathbf a_2}(\beta)-1 \in p\Z_p$, $Q_{\mathbf a_2}(\alpha) \in \Z_p^{\times}$ and so $\Z_p\alpha+\Z_p\beta$ is diagonalized as $\mathbb Z_p \alpha \perp \mathbb Z_p(\beta-Q_{\mathbf a_2}(\alpha)^{-1}\alpha)$, i.e., $$\mathbb Z_p \alpha + \mathbb Z_p \beta=\mathbb Z_p \alpha \perp \mathbb Z_p(\beta-Q_{\mathbf a_2}(\alpha)^{-1}\alpha).$$Since $p$ is odd prime, there is an orthogonal $\Z_p$-basis $\mathbf v_1,\mathbf v_2,\cdots,\mathbf v_{10}$ of $(\Z_p^{10},Q_{\mathbf a})$ for which 
$$\begin{cases}\mathbf v_1=\alpha \\ 
Q_{\mathbf a_2}(\mathbf v_2),Q_{\mathbf a_2}(\mathbf v_3),Q_{\mathbf a_2}(\mathbf v_4),Q_{\mathbf a_2}(\mathbf v_5) \in \Z_p^{\times} \\
 Q_{\mathbf a_2}(\mathbf v_6),Q_{\mathbf a_2}(\mathbf v_7) \in p \Z_p^{\times}.\end{cases}$$
For $B_{\mathbf a_2}(\mathbf v_1, \beta-Q_{\mathbf a_2}(\alpha)^{-1}\alpha)=0$, we may write $$\beta-Q_{\mathbf a_2}(\alpha)^{-1}\alpha=p^{t_2}u_2\mathbf v_2+p^{t_3}u_3\mathbf v_3+\cdots+p^{t_{10}}u_{10}\mathbf v_{10}$$ where $t_i \in \mathbb N_0$ and $u_i \in \Z_p^{\times}$.
Without loss of generality, let $0\le t_2\le t_3\le \cdots \le t_5$ and $t_6 \le t_7$.
If $t_2 \le t_6$, then we may show that $L_{\mathbf a_2, \beta}\otimes \mathbb Z_p$ contains a binary unimodular $\Z_p$-sublattice and a binary $p$-modular $\Z_p$-sublattice through a similar way used in (1), which yields that $L_{\mathbf a_2,\beta}\otimes \Z_p$ is (even) universal.\\
If $t_2>t_6$, then $L_{\mathbf a_2,\beta}\otimes \mathbb Z_p$ contains a quaternary $\Z_p$-sublattice $$\sum_{k=2}^5\Z_p \mathbf w_k$$  where $\mathbf w_k=\mathbf v_k-p^{t_k-t_6}u_ku_6^{-1}Q_{\mathbf a_2}(\mathbf v_k)Q_{\mathbf a_2}(\mathbf v_6)^{-1}\mathbf v_6.$
Note that \begin{equation}\label{w} \text {$\begin{cases}Q_{\mathbf a_2}(\mathbf w_k) \equiv Q_{\mathbf a_2}(\mathbf v_k) \nequiv 0 \pmod{p}& \\ B_{\mathbf a_2}(\mathbf w_i,\mathbf w_j) \equiv 0 \pmod{p} & \text{for } i \neq j .\end{cases}$} \end{equation}
From the modulus condition (\ref{w}), we may obtain $d(\sum_{k=2}^5\Z_p\mathbf w_k) \in \Z_p^{\times}$, yielding that the quaternary $\Z_p$-sublattice $\sum_{k=2}^5\Z_p\mathbf w_k$ of $L_{\mathbf a_2,\beta}\otimes \Z_p$ is unimodular.
So, $L_{\mathbf a_2, \beta}\otimes \mathbb Z_p$ is (even) universal  by 92:1b in \cite{O}.
This completes the proof.
\end{proof}

\vskip 0.8em

\begin{rmk} \label{r1}
From now on throughout this paper, we always assume that \begin{equation}\label{f7}i_1=1, \ i_2=2, \cdots ,i_7=7. \end{equation}
Then for any prime $p \ge 17$, since $$\begin{cases}a_i < 2^i <17 \le p & \text{ for } i \le 5 \\ a_i<2^i <17^2 \le p^2 & \text{ for }i \le 7,\end{cases}$$ 
$L_{\mathbf a_2, \beta}\otimes \mathbb Z_p$ is (even) universal for any vector $\beta$ with $B_{\mathbf a_2}(\alpha, \beta)=1$ by Proposition \ref{uni}.
Hence when we examine whether there are $10$-subtuple $\mathbf a_2$ of $\mathbf a \in A$ and  $\beta \in \mathbb Z^{10}$ with $B_{\mathbf a_2}(\alpha,\beta)=1$ for which $L_{\mathbf a_2, \beta}$ is locally even universal under the condition (\ref{f7}), we only need to pay attention to the local structure $L_{\mathbf a_2, \beta}\otimes \z_p$ for primes $p$ less than or equal to 13, i.e., it is enough to check that whether $L_{\mathbf a_2, \beta}\otimes \mathbb Z_p$ are even universal for all primes $p\le 13$.

For each $\mathbf a \in A$, we define $P(\mathbf a)$ by the set of odd primes satisfying none of the following two conditions \\
\begin{equation} \label{u6}
\text{there are at least 6 units of $\z_p$ in }\{a_1, \cdots, a_7\} 
\end{equation}
and
\begin{equation} \label{u5p2}
\text{there are 5 units and 2 primes  of $\z_p$ in }\{a_1, \cdots, a_7\}
\end{equation}
by admitting a recursion.
Recall that for each $\mathbf a \in A$, we have $P(\mathbf a) \subseteq \{3,5,7,11,13\}$ from the assumption $i_1=1,i_2=2,\cdots,i_7=7$.
We define $$A(p):=\{\mathbf a\in A |p \in P(\mathbf a) \}$$ for odd prime $p$.
Then $A(p) \not=\emptyset$ only if $p \le 13$.

In order to do sort every $\mathbf a\in A$ for which $(L_{\mathbf a_2, \beta},Q_{\mathbf a_2})$ is locally even universal for some $10$-subtuple $\mathbf a_2=(a_{i_1},\cdots,a_{i_{10}})$ with (\ref{f7}) of $\mathbf a=(a_1,\cdots,a_{16})$ and $\beta=(\beta_1,\cdots,\beta_{10}) \in \mathbb Z^{10}$ with $B_{\mathbf a_2}(\alpha, \beta)=\sum _{k=1}^{10}a_{i_k}\cdot \beta_k=1$, it would suffice to go through the process to check the followings

\begin{itemize}
\item[(1)]
For $\mathbf a \in A(p) \setminus \bigcup\limits_{p'>p}A(p')$, whether there are $10$-subtuple $\mathbf a_2$ with (\ref{f7}) of $\mathbf a=(a_1,\cdots,a_{16})$ and $\beta \in \mathbb Z^{10}$ such that $(L_{\mathbf a_2, \beta}\otimes \mathbb Z_q,Q_{\mathbf a_2})$ are even universal for all primes $q\le p$ for each $p \in \{3,5,7,11,13 \}$.
\item[(2)]
For $\mathbf a \in A \setminus \bigcup\limits_{p=3}^{13}A(p)=:A(2)$, whether there are $10$-subtuple $\mathbf a_2$ with (\ref{f7}) of $\mathbf a=(a_1,\cdots,a_{16})$ and $\beta \in \mathbb Z^{10}$ such that $(L_{\mathbf a_2, \beta}\otimes \mathbb Z_2,Q_{\mathbf a_2})$ is even universal.
\end{itemize}
For $\mathbf a \in A$ except which fails to be confirmed (1) or (2) holds, we may claim that there are $10$-subtuple $\mathbf a_2$ of $\mathbf a \in A$ and  $\beta \in \mathbb Z^{10}$ with $B_{\mathbf a_2}(\alpha,\beta)=1$ for which $L_{\mathbf a_2, \beta}$ is locally even universal

\vskip 1em

\end{rmk}

The Lemma \ref{nd} is a key lemma which is mainly used to calculate the structure of the orthogonal complement $L_{\mathbf a_2,\beta}\otimes \Z_p=(\Z_p\alpha+\Z_p\beta)^{\perp}$ of $(\z_p^{10},Q_{\mathbf a_2})$ over non-dyadic local ring.
\begin{lem} \label{nd}
For an odd prime $p$, let $\Z_p\alpha+\Z_p\beta$ be unimodular $\z_p$-sublattice of $(\z_p^{10},Q_{\mathbf a_2})$ (i.e., $Q_{\mathbf a_2}(\alpha)Q_{\mathbf a_2}(\beta)-1 \in \z_p^{\times}$) and there are at least three units of $\Z_p$ in $\{a_{i_1},\cdots, a_{i_{10}}\}$ by admitting a recursion.
By rearranging the order, suppose that $a_{i_1},a_{i_2},a_{i_3} \in \Z_p^{\times}$.
Then $L_{\mathbf a_2,\beta} \otimes \Z_p=(\Z_p\alpha+\Z_p\beta)^{\perp}$ is isometric to $$\left<\frac{a_{i_1}a_{i_2}a_{i_3}}{Q_{\mathbf a_2}(\alpha)Q_{\mathbf a_2}(\beta)-1}\right>\perp\left<a_{i_4}\right>\perp \cdots \perp\left<a_{i_{10}}\right>.$$
\end{lem}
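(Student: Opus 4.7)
The plan is to exploit the unimodularity hypothesis to split $K := \mathbb{Z}_p\alpha + \mathbb{Z}_p\beta$ off as an orthogonal direct summand of $(\mathbb{Z}_p^{10}, Q_{\mathbf{a}_2})$, and then identify its orthogonal complement by matching Jordan invariants. This suffices because two $\mathbb{Z}_p$-lattices at an odd prime are isometric iff they share the same Jordan decomposition data (91:9 of \cite{O}).

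First I would establish the orthogonal decomposition $\mathbb{Z}_p^{10} = K \perp K^\perp$. Since $d(K) = Q_{\mathbf{a}_2}(\alpha)Q_{\mathbf{a}_2}(\beta) - 1 \in \mathbb{Z}_p^\times$, the sublattice $K$ is automatically primitive in $\mathbb{Z}_p^{10}$: any primitive closure $K'$ of $K$ would satisfy $d(K) = [K':K]^2 \cdot d(K')$ with $[K':K]$ a $p$-power, forcing $[K':K] = 1$. The unimodularity of $K$ then yields the orthogonal splitting $\mathbb{Z}_p^{10} = K \perp K^\perp$ via 82:15 of \cite{O}, and Lemma \ref{p} identifies $K^\perp$ with $L_{\mathbf{a}_2, \beta} \otimes \mathbb{Z}_p$.

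Next I would compute the Jordan invariants on each side. Set $U := \{k : a_{i_k} \in \mathbb{Z}_p^\times\}$, which contains $\{1,2,3\}$ by hypothesis. Since $(\mathbb{Z}_p^{10}, Q_{\mathbf{a}_2}) \cong \langle a_{i_1} \rangle \perp \cdots \perp \langle a_{i_{10}} \rangle$, the Jordan decomposition is read off by grouping diagonal entries of equal $p$-valuation: its unimodular component has rank $|U|$ and discriminant $\prod_{k \in U} a_{i_k}$ modulo squares, and the non-unit entries contribute the higher-scale components. As $K$ is unimodular of rank $2$ with discriminant $d(K)$, multiplicativity of discriminants forces the unimodular part of $K^\perp$ to have rank $|U| - 2$ and discriminant $\prod_{k \in U} a_{i_k} \cdot d(K)^{-1}$ modulo squares, while the higher-scale Jordan components of $K^\perp$ coincide with those of $\mathbb{Z}_p^{10}$ (since $K$ itself has no higher-scale part).

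Finally I would verify that the claimed right-hand side carries the same Jordan invariants. Setting $u := a_{i_1} a_{i_2} a_{i_3} / d(K)$, its unimodular component is $\langle u \rangle$ together with those $\langle a_{i_k} \rangle$ for $k \geq 4$ with $k \in U$, giving rank $1 + (|U| - 3) = |U| - 2$ and discriminant $u \cdot \prod_{k \geq 4,\, k \in U} a_{i_k} = \prod_{k \in U} a_{i_k} \cdot d(K)^{-1}$; its higher-scale components are the $\langle a_{i_k} \rangle$ for $k \geq 4$ with $k \notin U$, matching those of $K^\perp$ block by block. Invoking 91:9 of \cite{O} then yields the asserted isometry. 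The main subtlety is purely bookkeeping: the statement permits additional units among $a_{i_4}, \ldots, a_{i_{10}}$ beyond the three singled out, but the discriminant calculation absorbs any such extras symmetrically on both sides, so no case distinction on $|U|$ is needed.
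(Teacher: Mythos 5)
Your argument is correct, and it is essentially the paper's proof written out in full: the paper disposes of this lemma with a bare citation of 82:15, 91:9 (and 92:1b, 92:3) in \cite{O}, which are exactly the splitting-off of the unimodular sublattice $\Z_p\alpha+\Z_p\beta$ and the non-dyadic classification by Jordan invariants (scale, rank, discriminant) that you carry out. The only cosmetic remark is that 91:9 per se classifies unimodular lattices; the scale-by-scale matching you perform also implicitly uses the uniqueness of the Jordan splitting at a non-dyadic prime, but that is standard and does not affect the argument.
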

\begin{proof}
It follows from 82:15, 91:9, 92:1b, and 92:3 in \cite{O}.
\end{proof}

\vskip 0.8em

\begin{rmk}
Note that when $p\not=3$ is an odd prime, $a_{i_1},a_{i_2},$ and $a_{i_3}$ are all units of $\Z_p$. 
In virtue of Lemma \ref{nd}, we may easily figure out the exact structure of $L_{\mathbf a_2, \beta}\otimes \mathbb Z_p$ when $p$ is an odd prime and $\Z_p \alpha +\Z_p \beta$ is unimodular (we try to take a suitable vector $\beta$ for which $\Z_p\alpha+\Z_p\beta$ is unimodular for primes $p$ to review based on Remark \ref{r1}, i.e., for $ p \in P(\mathbf a)$).
On the other hand, when $p=2$, since the criterion in Lemma \ref{nd} does not hold, we need to find some even universal sublattice of $L_{\mathbf a_2, \beta}\otimes \mathbb Z_2$ directly to show that $L_{\mathbf a_2, \beta}\otimes \mathbb Z_2$ is even universal.
In the following lemmas, we introduce some criteria which are used to claim that $L_{\mathbf a_2, \beta}\otimes \mathbb Z_2$ is even universal by showing that $L_{\mathbf a_2,\beta}\otimes \Z_2$ contains an even universal $\Z_2$-sublattice.
\end{rmk}

\begin{lem} \label{odd3}
For $\mathbf a=(a_1,\cdots,a_{10}) \in \N^{10}$ and a vector $\beta=(\beta_1,\cdots,\beta_{10}) \in \Z ^{10}$ with $B_{\mathbf a}(\alpha,\beta)=1$, suppose that there are distinct indices $1\le j_1,j_2,j_3\le 10$ such that 
$$\begin{cases} \beta_{j_1}=\beta_{j_2}=\beta_{j_3} &  \\
a_{j_1}\equiv a_{j_2}\equiv a_{j_3} \equiv 1 \pmod{2}. & \end{cases}$$
\begin{itemize}
\item[(1)] If $a_{j_1}\nequiv a_{j_2} \pmod{4}$ or $a_{j_1}\nequiv a_{j_3} \pmod{4}$, then $L_{\mathbf a,\beta}\otimes \Z_2$ is even universal.
\item[(2)] If $a_{j_1}\equiv a_{j_2} \equiv a_{j_3} \pmod{4}$, then $L_{\mathbf a,\beta}\otimes \Z_2$ represents every $2$-adic integer with odd order in $\Z_2$.
\item[(3)] If $a_{j_1}\equiv a_{j_2} \equiv a_{j_3} \pmod{4}$ and there is an extra $j_4$ satisfying $\beta_{j_1}=\beta_{j_4}$ and $a_{j_4} \equiv 2 \pmod{4}$, then $L_{\mathbf a,\beta}\otimes \Z_2$ is even universal.
\end{itemize}
\end{lem}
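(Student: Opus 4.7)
The plan is to exhibit, for each of the three parts, an explicit low-rank $\Z_2$-sublattice of $L_{\mathbf a,\beta}\otimes\Z_2$ whose local structure is transparent, and to read off the required representation property. The starting observation is that any vector $\mathbf v\in\Z^{10}$ supported on the coordinates $\{j_1,j_2,j_3\}$ (and, in part~(3), also on $j_4$) automatically satisfies
\[
B_{\mathbf a}(\beta,\mathbf v)=\beta_{j_1}\cdot B_{\mathbf a}(\alpha,\mathbf v),
\]
since the common value $\beta_{j_1}=\beta_{j_2}=\beta_{j_3}(=\beta_{j_4})$ factors out of the defining sum. Hence membership in $L_{\mathbf a,\beta}$ reduces to the single linear equation $B_{\mathbf a}(\alpha,\mathbf v)=0$, which cuts out a rank-$2$ (resp.\ rank-$3$) $\Z_2$-sublattice of $L_{\mathbf a,\beta}\otimes\Z_2$ inside the span of these coordinate vectors.

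For parts~(1) and~(2) I would work with the binary sublattice spanned by
\[
\mathbf v_1=a_{j_2}\mathbf e_{j_1}-a_{j_1}\mathbf e_{j_2},\qquad \mathbf v_2=a_{j_3}\mathbf e_{j_1}-a_{j_1}\mathbf e_{j_3}.
\]
A direct calculation gives $Q_{\mathbf a}(\mathbf v_k)=a_{j_1}a_{j_{k+1}}(a_{j_1}+a_{j_{k+1}})\in 2\Z_2$ (sum of two odd numbers is even) and $B_{\mathbf a}(\mathbf v_1,\mathbf v_2)=a_{j_1}a_{j_2}a_{j_3}\in\Z_2^{\times}$, so this binary $\Z_2$-lattice has scale $\Z_2$ and norm $2\Z_2$. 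By the $2$-adic classification it is isometric either to the hyperbolic plane $H$ or to $A=\bigl(\begin{smallmatrix}2&1\\1&2\end{smallmatrix}\bigr)$, distinguished by its determinant modulo $(\Z_2^{\times})^{2}$. Expanding,
\[
\det = a_{j_1}^{3}\,a_{j_2}\,a_{j_3}\,(a_{j_1}+a_{j_2}+a_{j_3}),
\]
and a short case-analysis modulo~$8$ shows that under the hypothesis of~(1) one always has $\det\equiv 7\pmod 8$ (hence the binary is $H$, which is even universal over $\Z_2$), while under the hypothesis of~(2) one always has $\det\equiv 3\pmod 8$ (hence $A$). Since $A\otimes\Z_2$ represents exactly $\{0\}\cup\{z\in\Z_2:\ord_2(z)\text{ is odd}\}$ via the norm form of the unramified quadratic extension $\Q_2(\omega)/\Q_2$ (with $\omega$ a primitive cube root of unity, for which the norm map is surjective onto $\Z_2^{\times}$), both~(1) and~(2) follow.

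For part~(3) I adjoin $\mathbf v_3=a_{j_4}\mathbf e_{j_1}-a_{j_1}\mathbf e_{j_4}$, which again lies in $L_{\mathbf a,\beta}$ by the opening observation. Because $a_{j_4}\equiv 2\pmod 4$ while $a_{j_1},a_{j_2},a_{j_3}$ are odd and pairwise congruent mod~$4$, every entry of the third row and column of the $3\times 3$ Gram matrix has $2$-adic order exactly~$1$, and expanding the determinant modulo~$4$ shows that the ternary has determinant of order exactly~$1$ as well. Since the unimodular $A$-block already splits off as a Jordan summand (unimodular $\Z_2$-sublattices always do, via orthogonal projection), the Jordan decomposition of the ternary is forced to be $A\perp\langle d\rangle$ with $\ord_2(d)=1$. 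Even universality then follows: for $m\in 2\Z_2$ of odd order, take $x_3=0$ and represent $m$ by the $A$-summand; for $m\in 2\Z_2$ with $\ord_2(m)\ge 2$ even, take $x_3=1$, noting that $m-d$ has order exactly~$1$ and is therefore represented by $A$.

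The main obstacle is the $2$-adic bookkeeping in part~(3): verifying that the orthogonal Jordan complement of the $A$-summand is indeed a scale-$2\Z_2$ unary block (so that $A\perp\langle d\rangle$ genuinely is the Jordan decomposition) rests on the $\bmod 4$ analysis of the $3\times 3$ determinant, where the hypothesis $a_{j_4}\equiv 2\pmod 4$ is used in an essential way to guarantee that the third row and column contribute only order-$1$ entries. Parts~(1) and~(2) are essentially a mechanical modulo-$8$ calculation once the Gram matrix of the binary is written down.
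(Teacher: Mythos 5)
Your proof is correct and follows essentially the same route as the paper: both exhibit an explicit binary $\Z_2$-sublattice of $L_{\mathbf a,\beta}\otimes\Z_2$ supported on the coordinates $j_1,j_2,j_3$ and identify it as $\mathbb H$ or $\mathbb A$ by its determinant modulo unit squares (your star-shaped basis $a_{j_k}\mathbf e_{j_1}-a_{j_1}\mathbf e_{j_k}$ spans a lattice with the same determinant class $a_{j_1}a_{j_2}a_{j_3}(a_{j_1}+a_{j_2}+a_{j_3})$ as the paper's chain basis $a_{j_2}\mathbf e_{j_1}-a_{j_1}\mathbf e_{j_2},\ a_{j_3}\mathbf e_{j_2}-a_{j_2}\mathbf e_{j_3}$). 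The only real divergence is in part (3), where the paper obtains $\mathbb H$ directly by pairing $a_{j_2}\mathbf e_{j_1}-a_{j_1}\mathbf e_{j_2}$ with the single combined vector $(a_{j_3}+a_{j_4})\mathbf e_{j_2}-a_{j_2}(\mathbf e_{j_3}+\mathbf e_{j_4})$, whereas you adjoin a third basis vector and argue via the Jordan splitting $\mathbb A\perp\langle 2\epsilon\rangle$ of the resulting ternary; both arguments are valid.
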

\begin{proof}
(1) Note that $L_{\mathbf a,\beta} \otimes \Z_2$ contains a $\Z_2$-sublattice $$\Z_2( a_{j_2}\mathbf e_{j_1}-a_{j_1}\mathbf e_{j_2})+\Z_2( a_{j_3}\mathbf e_{j_2}-a_{j_2}\mathbf e_{j_3})$$ which is isometric to even universal hyperplane $\mathbb H\cong \begin{pmatrix}0&1\\ 1&0\end{pmatrix}$. This yields the claim.\\
(2) Since $L_{\mathbf a,\beta} \otimes \Z_2$ contains a $\Z_2$-sublattice 
$$\Z_2( a_{j_2}\mathbf e_{j_1}-a_{j_1}\mathbf e_{j_2})+\Z_2( a_{j_3}\mathbf e_{j_2}-a_{j_2}\mathbf e_{j_3})$$ which is isometric to $\mathbb A \cong \begin{pmatrix}2&1\\ 1&2\end{pmatrix}$ and $\mathbb A \otimes \z_2$ represents every prime element in $\Z_2$. 
This yields the claim. \\
(3) Note that $L_{\mathbf a,\beta} \otimes \Z_2$ contains a $\Z_2$-sublattice 
$$\Z_2( a_{j_2}\mathbf e_{j_1}-a_{j_1}\mathbf e_{j_2})+\Z_2( (a_{j_3}+a_{j_4})\mathbf e_{j_2}-a_{j_2}(\mathbf e_{j_3}+\mathbf e_{j4}))$$ which is isometric to even universal hyperplane $\mathbb H\cong \begin{pmatrix}0&1\\ 1&0\end{pmatrix}$.
This yields the claim.\\
\end{proof}

\vskip 0.8em

\begin{lem} \label{odd2}
For $\mathbf a=(a_1,\cdots,a_{10}) \in \N^{10}$ and a vector $\beta=(\beta_1,\cdots,\beta_{10}) \in \Z ^{10}$ with $B_{\mathbf a}(\alpha, \beta)=1$, suppose that 
$$\beta_{j_1}=\beta_{j_2}=\cdots =\beta_{j_l}$$ for all distinct $j_k$'s (the order $j_k<j_{k+1}$ is not required), then $L_{\mathbf a,\beta} \otimes
 \Z_p$ contains a $\Z_p$-sublattice which is isometric to 
 $$\left<b_1\right>\perp \cdots \perp\left<b_{l-1}\right>$$ where $b_h=a_{j_{h+1}}(\sum \limits_{k=1}^{h} a_{j_k})(\sum \limits_{k=1}^{h+1} a_{j_k})$ for any prime $p$.
\end{lem}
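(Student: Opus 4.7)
The plan is to produce an explicit orthogonal $(l-1)$-tuple of vectors in $L_{\mathbf a,\beta}$ itself (so no localization is required in the construction) whose norms are precisely the $b_h$'s. The shape of $b_h=a_{j_{h+1}}S_h S_{h+1}$, where $S_h:=\sum_{k=1}^{h}a_{j_k}$, strongly suggests the ``partial-sum'' ansatz
$$\mathbf v_h := S_h\,\mathbf e_{j_{h+1}} \;-\; a_{j_{h+1}}\sum_{k=1}^{h}\mathbf e_{j_k}\qquad (h=1,\dots,l-1).$$
This is really the only nontrivial step; once the vectors are written down, everything is a short verification.

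First I would check that each $\mathbf v_h$ lies in $L_{\mathbf a,\beta}$. The computation $B_{\mathbf a}(\alpha,\mathbf v_h)=a_{j_{h+1}}S_h-a_{j_{h+1}}S_h=0$ is immediate. For $B_{\mathbf a}(\beta,\mathbf v_h)$, the common-value hypothesis $\beta_{j_1}=\cdots=\beta_{j_l}$ lets me factor that scalar out of both terms, giving $a_{j_{h+1}}S_h\beta_{j_{h+1}}-a_{j_{h+1}}S_h\beta_{j_1}=0$. This is exactly where the assumption of the lemma is used.

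Next I would compute the Gram data. Direct expansion gives
$$Q_{\mathbf a}(\mathbf v_h)=a_{j_{h+1}}S_h^{\,2}+a_{j_{h+1}}^{\,2}S_h=a_{j_{h+1}}S_h\bigl(S_h+a_{j_{h+1}}\bigr)=b_h.$$
For $h<h'$, the coefficient of $\mathbf e_{j_{h'+1}}$ in $\mathbf v_h$ is zero (the indices $j_1,\dots,j_l$ are distinct), and the coefficient of $\mathbf e_{j_{h+1}}$ in $\mathbf v_{h'}$ is $-a_{j_{h'+1}}$ if $h+1\le h'$ and zero if $h+1>h'$, which never arises here. A short bookkeeping of the remaining overlap on $\{j_1,\dots,j_h\}$ yields $B_{\mathbf a}(\mathbf v_h,\mathbf v_{h'})=a_{j_{h+1}}a_{j_{h'+1}}S_h-a_{j_{h+1}}a_{j_{h'+1}}S_h=0$.

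Finally, because $\mathbf v_h$ is the unique vector in the family with a nonzero coefficient on $\mathbf e_{j_{h+1}}$, the vectors $\mathbf v_1,\dots,\mathbf v_{l-1}$ are linearly independent over $\mathbb Q$, so they span a rank-$(l-1)$ sublattice of $L_{\mathbf a,\beta}$ whose Gram matrix is diagonal with entries $b_1,\dots,b_{l-1}$. Tensoring with $\mathbb Z_p$ for an arbitrary prime $p$ gives the required $\mathbb Z_p$-sublattice of $L_{\mathbf a,\beta}\otimes\mathbb Z_p$ isometric to $\langle b_1\rangle\perp\cdots\perp\langle b_{l-1}\rangle$. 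There is no real obstacle beyond spotting the ansatz: the construction is deliberately ``telescoping'' so that the accumulated coordinates in $\mathbf v_h$ neutralize $\alpha$ and $\beta$ simultaneously and leave no cross terms with later $\mathbf v_{h'}$.
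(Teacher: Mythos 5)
Your proposal is correct and matches the paper's proof essentially verbatim: the paper uses the vectors $\mathbf v_h=a_{j_{h+1}}(\mathbf e_{j_1}+\cdots+\mathbf e_{j_h})-(a_{j_1}+\cdots+a_{j_h})\mathbf e_{j_{h+1}}$, which are just the negatives of yours, and then observes they lie in $L_{\mathbf a,\beta}$, are mutually orthogonal, and have $Q_{\mathbf a}(\mathbf v_h)=b_h$. Your write-up actually supplies the verification details the paper leaves implicit.
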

\begin{proof}
Put $\mathbf v_h:=a_{j_{h+1}}(\mathbf e_{j_1}+\mathbf e_{j_2}+\cdots +\mathbf e_{j_{h}})-(a_{j_1}+a_{j_2}+\cdots+a_{j_{h}})\mathbf e_{j_{h+1}}$ for $h=1,2,\cdots, l-1$.
Then $\mathbf v_1,\mathbf v_2,\cdots,\mathbf v_{l-1} \in L_{\mathbf a,\beta}$ and they are mutually orthogonal.
Therefore $L_{\mathbf a,\beta}\otimes \Z_p$ contains a $\z_p$-sublattice 
$$\displaystyle\bigperp_{j=1}^{l-1}\Z_p \mathbf v_j$$ with $Q_{\mathbf a}(\mathbf v_j)=b_j$.
This yields the claim.
\end{proof}

\vskip 0.8em

\begin{lem} \label{12}
For $\mathbf a=(a_1,\cdots,a_{10}) \in \N^{10}$ with $(a_1,a_2)=(1,2)$ and $\beta:=-\mathbf e_1+\mathbf e_2 \in \Z^{10}$, $B_{\mathbf a}(\alpha,\beta)=1$ holds.
\begin{itemize}
\item[(1)] If there are three odd integers in $\{a_3,\cdots, a_{10}\}$, then $L_{\mathbf a,\beta}\otimes \Z_2$ is even universal.
\item[(2)]
For all distint $l$ indices $3\le j_1,\cdots, j_l \le 10$ (the order $j_k<j_{k+1}$ is not required), $L_{\mathbf a,\beta} \otimes \Z_p$ contains a $\Z_p$-sublattice which is isometric to $\left<b_0\right>\perp\left<b_1\right>\perp \cdots \perp\left<b_{l-1}\right>$ where 
$$b_h=\begin{cases}\left( \frac{2a}{\gcd (4,a)} \right)^2+2\left( \frac{a}{\gcd (4,a)} \right)^2 +a \left( \frac{-4}{\gcd (4,a)} \right)^2 &\text{when } h=0 \\
a_{j_{h+1}}(\sum \limits_{k=1}^{h} a_{j_k})(\sum \limits_{k=1}^{h+1} a_{j_k}) & \text{when } h\ge 1\end{cases}$$
where $a:=\sum \limits_{k=1}^la_{j_k}$ for any prime $p$.
\end{itemize}
\end{lem}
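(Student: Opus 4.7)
The plan is to first verify $B_{\mathbf a}(\alpha,\beta)=a_1\cdot(-1)+a_2\cdot 1=-1+2=1$, which is immediate from $(a_1,a_2)=(1,2)$.

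For part (2), I will extend Lemma \ref{odd2}. The $l-1$ vectors $\mathbf v_h$ constructed there from the indices $j_1,\ldots,j_l$ already lie in $L_{\mathbf a,\beta}$ (orthogonality to $\beta$ is automatic since $\beta_{j_k}=0$ for all $k$), are mutually orthogonal, and give the claimed $b_h$ for $h\geq 1$. I then adjoin one new vector $\mathbf v_0=c\mathbf e_1+d\mathbf e_2+e\sum_{k=1}^l\mathbf e_{j_k}$, using a common coefficient $e$ on each $\mathbf e_{j_k}$. The constraints $B_{\mathbf a}(\beta,\mathbf v_0)=0$ and $B_{\mathbf a}(\alpha,\mathbf v_0)=0$ reduce to $c=2d$ and $4d+ae=0$ (with $a=\sum_k a_{j_k}$), solved integrally by $d=a/\gcd(4,a)$, $e=-4/\gcd(4,a)$, $c=2d$, producing $Q_{\mathbf a}(\mathbf v_0)=b_0$. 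A one-line calculation gives $B_{\mathbf a}(\mathbf v_0,\mathbf v_h)=0$: the uniform coefficient of $\mathbf v_0$ across the $\mathbf e_{j_k}$ makes the positive and negative contributions of $\mathbf v_h$ cancel. Hence $\mathbf v_0,\mathbf v_1,\ldots,\mathbf v_{l-1}$ span the required orthogonal sum inside $L_{\mathbf a,\beta}\otimes\Z_p$ for any prime $p$.

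For part (1), let $a_{j_1},a_{j_2},a_{j_3}$ be three odd entries with $3\leq j_k\leq 10$, so that $\beta_{j_1}=\beta_{j_2}=\beta_{j_3}=0$ and Lemma \ref{odd3} applies. If they are not all congruent mod $4$, Lemma \ref{odd3}(1) gives even universality directly. Otherwise, the proof of Lemma \ref{odd3}(2) supplies a rank-$2$ sublattice $\mathbb A\subseteq L_{\mathbf a,\beta}\otimes\Z_2$ which represents precisely the $2$-adic integers of odd valuation, since $\mathbb A$ realizes the form $2(x^2+xy+y^2)$ and $x^2+xy+y^2$ is the norm form of the unramified quadratic extension of $\Q_2$. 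I then apply part (2) with $l=3$ on these three indices to adjoin $\mathbf v_0$ with $Q_{\mathbf a}(\mathbf v_0)=b_0=2a(3a+8)$, where $a=a_{j_1}+a_{j_2}+a_{j_3}$ is odd and $3a+8$ is odd, so $b_0/2\in\Z_2^\times$. A direct coordinate check shows $\mathbf v_0$ is orthogonal to both generators of $\mathbb A$, yielding $\mathbb A\perp\langle b_0\rangle\subseteq L_{\mathbf a,\beta}\otimes\Z_2$. Even universality then follows from a split on $v_2(n)$: if $v_2(n)$ is odd, $\mathbb A$ alone represents $n$; if $v_2(n)\geq 2$, then $n-b_0$ has $v_2=1$, so $n-b_0$ is represented by $\mathbb A$ and $n=b_0+(n-b_0)$ is represented by $\mathbb A\perp\langle b_0\rangle$.

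The delicate point is case B of part (1): it relies on the precise fact that $\mathbb A\otimes\Z_2$ captures all and only the elements of $\Z_2$ with odd $2$-adic valuation, together with the orthogonality of $\mathbf v_0$ to the generators of $\mathbb A$. The construction in part (2) is routine once the uniform ansatz for $\mathbf v_0$ is identified.
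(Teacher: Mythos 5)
Your proposal is correct and follows essentially the same route as the paper: part (2) is proved with exactly the same vectors $\mathbf v_0,\ldots,\mathbf v_{l-1}$ (the paper's $\mathbf v_0=\frac{2a}{\gcd(4,a)}\mathbf e_1+\frac{a}{\gcd(4,a)}\mathbf e_2-\frac{4}{\gcd(4,a)}\sum_k\mathbf e_{j_k}$ is your uniform ansatz), and part (1) splits into the same two cases, invoking Lemma \ref{odd3}(1) when the three odd coefficients are not all congruent mod $4$ and otherwise exhibiting the sublattice $\langle b_0\rangle\perp\mathbb A$ (which the paper identifies as $\langle 6\rangle\perp\mathbb A$). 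Your only addition is the explicit valuation-splitting argument for why $\mathbb A\perp\langle 2u\rangle$ is even universal, which the paper asserts without proof.
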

\begin{proof}
(1) Without loss of generality, let $a_3,a_4,a_5$ be odd.\\
If $a_3 \nequiv a_4 \pmod{4}$ or $a_3 \nequiv a_5 \pmod{4}$, then by Lemma \ref{odd3} (1), it is done.\\
If $a_3\equiv a_4 \equiv a_5 \pmod{4}$, then $L_{\mathbf a, \beta}\otimes \Z_2$ contains a $\Z_2$-sublattice
$$\Z_2(2(a_3+a_4+a_5)\mathbf e_1+(a_3+a_4+a_5)\mathbf e_2-4(\mathbf e_3+\mathbf e_4+\mathbf e_5))\perp (\Z_2(a_4\mathbf e_3-a_3\mathbf e_4)+\Z_2(a_5\mathbf e_4-a_4\mathbf e_5))$$
which is isometric to even universal $\Z_2$-lattice $\left<6\right>\perp \mathbb A$.
 
(2)$L_{\mathbf a,\beta}\otimes \Z_p$ contains $\perp _{h=0}^{l-1}\Z_p \mathbf v_h$ where
$$\mathbf v_h =\begin{cases}\frac{2a}{\gcd (4,a)}\mathbf e_1+\frac{a}{\gcd (4,a)}\mathbf e_2-\frac{4}{\gcd (4,a)}(\mathbf e_{j_1}+\cdots + \mathbf e_{j_l}) &\text{for } h=0
\\a_{j_{h+1}}(\mathbf e_{j_1}+\mathbf e_{j_2}+\cdots +\mathbf e_{j_{h}})-(a_{j_1}+a_{j_2}+\cdots+a_{j_{h}})\mathbf e_{j_{h+1}} &\text{for }h\ge 1.\end{cases}$$
with $Q_{\mathbf a}(\mathbf v_h)=b_h$. 
This yields the claim.
\end{proof}

\vskip 0.8em

\begin{prp}\label{delta2}
A $\Z_2$-lattice which is isometric to $\left<b_1\right>\perp\left<b_2\right>\perp\left<b_3\right>\perp\left<b_4\right>$ with $(\ord_2(b_1), \ord_2(b_2), \ord_2(b_3), \ord_2( b_4))\in \Delta_2$ is even universal where 
\begin{align*}\Delta_2:=\{ & (1,1,1,1), (1,1,1,2),(1,1,1,3), (1,1,2,2), \\ &(1,1,2,3),(1,1,2,4),(1,2,2,3), (1,2,3,3),(1,2,3,4) \}.\end{align*}
\end{prp}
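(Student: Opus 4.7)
The plan is to prove the proposition by a case analysis of the nine tuples in $\Delta_2$. For each tuple $(e_1,e_2,e_3,e_4)$, write $b_i=2^{e_i}u_i$ with $u_i\in\Z_2^\times$ and set $L=\langle b_1,b_2,b_3,b_4\rangle$; the goal is to show $L$ represents every $N\in 2\Z_2$.

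The main engine of the argument is Hensel's lemma for diagonal quadratic forms. A $\Z_2$-representation $\sum b_i x_i^2 = N$ can be constructed by first solving the congruence $\sum b_i x_i^2 \equiv N \pmod{2^M}$ for a sufficiently large (but case-uniform) $M$, in such a way that at least one $x_i$ is a 2-adic unit whose derivative $2 b_i x_i$ has minimal 2-adic order in the gradient. Once such an approximate solution exists, Hensel lifts it to an exact $\Z_2$-solution. Consequently, in each case the problem reduces to a finite congruence check, and I would organize the cases by how many blocks lie at order~1.

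For $(1,1,1,1)$, the lattice is $L=2\langle u_1,u_2,u_3,u_4\rangle$, so I would simply invoke the classical fact that every quaternary unimodular $\Z_2$-lattice is $\Z_2$-universal: over $\Q_2$ even the unique anisotropic form of rank~$4$ represents all nonzero $2$-adic numbers, and unimodularity transports every $\Q_2$-representation of a $2$-adic integer to a $\Z_2$-representation; hence $L$ represents all of $2\Z_2$. For $(1,1,1,2)$ and $(1,1,1,3)$, the sublattice $2\langle u_1,u_2,u_3\rangle$ is scaled ternary unimodular and already represents all of $2\Z_2$ except possibly elements of a single unit square class at one specific $2$-adic order; the fourth block $\langle 2^{e_4}u_4\rangle$ then fills this gap via a direct congruence adjustment followed by a Hensel lift.

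The cases $(1,1,2,2), (1,1,2,3), (1,1,2,4)$ have two blocks at order~$1$, and I would split $L=2\langle u_1,u_2\rangle\perp M$. The binary block $\langle u_1,u_2\rangle$ represents either every unit of $\Z_2$ (when it is hyperbolic, i.e.\ $-u_1u_2\in (\Z_2^\times)^2$) or exactly two of the four unit square classes, and the higher-order blocks of $M$ supply the remaining classes. The final group $(1,2,2,3), (1,2,3,3), (1,2,3,4)$ has a unique order-$1$ block, and for these I would proceed by successive approximation: use $\langle 2u_1\rangle$ to match $N/2$ modulo~$2$, then use $\langle 4u_2\rangle$ to correct modulo~$4$, and so on until the residual error lies above the Hensel threshold. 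The main obstacle I expect is the unit-square-class bookkeeping in the two tuples with a gap in the $2$-adic orders, namely $(1,1,2,4)$ and $(1,2,3,4)$; verifying that the blocks on either side of the gap together generate every class in $\Z_2^\times/(\Z_2^\times)^2$ requires a short check, which I would record as a brief table before concluding by Hensel.
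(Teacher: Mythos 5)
The first thing to say is that the paper offers no proof of Proposition \ref{delta2} at all --- its ``proof'' is literally ``We leave it to reader'' --- so there is nothing to compare your argument against; your sketch, once the finite congruence tables are actually computed, would supply exactly what the paper omits. Your architecture is the correct one and does close: for $f(\mathbf x)=\sum 2^{e_i}u_ix_i^2-N$ an approximate zero modulo $2^{2e_i+3}$ at which $x_i$ is a unit lifts by Hensel (the derivative $2^{e_i+1}u_ix_i$ has order $e_i+1$); targets of $2$-adic order $\ge 2$ reduce to targets of order $0$ or $1$ by the scaling $\mathbf x\mapsto 2\mathbf x$ (or by peeling off even variables, which replaces the tuple of orders by a shifted one that is again manageable); and each tuple then reduces to a residue check using the fact that $2^{e}ux^2$ contributes $0$, $2^{e}u$, or $2^{e+2}u$ modulo $2^{e+3}$ according as $\ord_2(x)\ge 2$, $\ord_2(x)=0$, or $\ord_2(x)=1$. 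I verified that all nine tuples, including the two gapped ones $(1,1,2,4)$ and $(1,2,3,4)$ you flag, survive this analysis (for $(1,1,2,4)$ the order-$1$ targets genuinely require working modulo $2^5$ with the Hensel variable sitting at order $1$, which your ``case-uniform $M$'' correctly anticipates).

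Two of the auxiliary facts you quote are, however, misstated and should be repaired before the details are written out. First, ``every quaternary unimodular $\Z_2$-lattice is $\Z_2$-universal'' and the accompanying claim that ``unimodularity transports every $\Q_2$-representation of a $2$-adic integer to a $\Z_2$-representation'' are false for \emph{even} unimodular lattices: $\mathbb H\perp\mathbb H$ is unimodular of rank $4$, its space is universal over $\Q_2$, yet the lattice represents only $2\Z_2$. What you need --- and what is true, and is in effect the content of 92:1b of \cite{O} as the paper uses it elsewhere --- is that an \emph{odd} (diagonal) unimodular $\Z_2$-lattice of rank $4$ represents all of $\Z_2$; since $\langle u_1,u_2,u_3,u_4\rangle$ with $u_i\in\Z_2^\times$ is of this kind, your conclusion for $(1,1,1,1)$ stands once the hypothesis is stated correctly. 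Second, the binary block $\langle u_1,u_2\rangle$ with $u_1,u_2\in\Z_2^\times$ represents every unit precisely when $u_1\not\equiv u_2\pmod 4$, not only in the hyperbolic case $-u_1u_2\in(\Z_2^\times)^2$; for instance $\langle 1,3\rangle$ is anisotropic but represents all of $\Z_2^\times$. This second slip only causes you to underestimate what a block represents, so it lengthens but does not invalidate the case analysis. With these corrections and the tables filled in, the plan yields a complete proof of the proposition.
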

\begin{proof}
We leave it to reader.
\end{proof}

\vskip 0.8em

\begin{rmk} \label{r2}
If there are three odd integers by admitting a recursion among components of $\mathbf a=(a_1, \cdots, a_{16})$, then we try to take its suitable $10$-subtuple $\mathbf a_2$ and a vector $\beta \in \Z^{10}$ satisfying the assumption of Lemma \ref{odd3} to get a locally even universal $L_{\mathbf a_2,\beta}$ (of course by simultaneously considering its local structure at other prime spots $p \in P(\mathbf a)$). 
On the other hand, when there are no three odd coefficients among $(a_1, \cdots, a_{16})$ (note that there is at least one odd integer $a_1=1$) or it is difficult to take $\mathbf a_2$ and $\beta \in \Z^{10}$ satisfying the assumption of Lemma \ref{odd3} from any other issues caused over a non-dyadic prime spot $p \in P(\mathbf a)$, we may use Lemma \ref{odd2} to show the existence of an even universal $\Z_2$-sublattice of $L_{\mathbf a_2,\beta}\otimes \Z_2$, yielding $L_{\mathbf a_2,\beta}\otimes \Z_2$ is even universal.
In this case, we try to take a vector $\beta \in \Z^{10}$ such that $\beta_{j_1}=\beta_{j_2}=\cdots =\beta_{j_l}$ for all distinct $j_k$'s and $a_{j_1}$ is unit and $a_{j_2}$ is prime of $\Z_2$, etc.
Note that after we arrange the order of indices satisfying the followings
$$\begin{cases}\ord_2(a_{j_1})=0<\ord_2(a_{j_2})=1 \le \ord_2(a_{j_{3}}) \le \cdots \le \ord_2(a_{j_{l_1}}) \\ \text{either $\ord_2(a_{j_{l-1}}) \le \ord_2(a_{j_{l}})$ or $\ord_2(a_{j_{l}})=1$}, \end{cases}$$
we have that $\ord_2(b_{j_h})=\ord_2(a_{j_{h+1}})$.
Hence if \begin{equation}\label{2-equ}\text{$\ord_2(a_{j_1})=0$ and $(\ord_2(a_{j_2}),\ord_2(a_{j_3}), \ord_2(a_{j_4}), \ord_2(a_{j_5})) \in \Delta_2,$}\end{equation} then we may obtain that $L_{\mathbf a_2,\beta}\otimes \Z_2$ contains an even universal $\Z_2$-sublattice by Lemma \ref{odd2} and Proposition \ref{delta2}.
\end{rmk}

\vskip 1em

$\bold{\ I. \ A(13)}$ \\
We show that when $\mathbf a \in A(13)$, there are $10$-subtuple $\mathbf a_2=(a_{i_1},\cdots,a_{i_{10}})$ of $\mathbf a=(a_1,\cdots,a_{16})$ with (\ref{f7}) and $\beta=(\beta_1,\cdots,\beta_{10}) \in \mathbb Z^{10}$ with $B_{\mathbf a_2}(\alpha, \beta)=\sum _{k=1}^{10}a_{i_k}\cdot \beta_k=1$ for which $L_{\mathbf a_2,\beta}$ is locally even universal.
From the construction of $A(13)$,  we have that
\begin{align*}
A(13)= \{ &(1,2,3,6,13a_5',13a_6',13a_7',a_8,\cdots,a_{16})\\
&(1,2,3,7,13a_5',13a_6',13a_7',a_8,\cdots,a_{16}) \\
&(1,2,4,5,13a_5',13a_6',13a_7',a_8,\cdots,a_{16}) \\
&(1,2,4,6,13a_5',13a_6',13a_7',a_8,\cdots,a_{16}) \\
&(1,2,4,7,13a_5',13a_6',13a_7',a_8,\cdots,a_{16}) \\
&(1,2,4,8,13a_5',13a_6',13a_7',a_8,\cdots,a_{16}) | a_i',a_i \in \mathbb N \} \subset A.
\end{align*}
Based on Remark \ref{r1}, we examine for each $\mathbf a \in A(13)$, whethere there are $10$-subtuple $\mathbf a_2=(a_{i_1},\cdots,a_{i_{10}})$ of $\mathbf a$ with (\ref{f7}) and $\beta \in \mathbb Z^{10}$ for which $L_{\mathbf a_2, \beta}\otimes \mathbb Z_q$ are even universal for all $q \in \{2,3,5,7,11,13\}$ which would be equivalent with that $L_{\mathbf a_2,\beta}$ is locally even universal. 
For $\mathbf a=(a_1,\cdots,a_4,13a_5',13a_6',13a_7', a_8,\cdots,a_{16}) \in A(13)$, we have
 $$a_5'=1, \quad a_6'\in \{1,2\}, \quad a_7'\in \{1,2,3,4\}.$$
Hence by Proposition \ref{uni}, it is required to take care the local structure $L_{\mathbf a_2,\beta} \otimes \z_q$ only for $q \in \{2,3,13\}$ in choosing $\mathbf a_2$ and $\beta \in \mathbb Z^{10}$ for locally even universal $L_{\mathbf a_2, \beta}$.
Moreover, for $\mathbf a \in A(13)$ with $(a_3,a_4,a_7') \neq (3,6,3)$, either there are at least $6$ units (by admitting a recursion) of $\Z_3$ or there are $5$ units and $2$ primes (by admitting a recursion) of $\Z_3$ in $\{a_1,a_2,\cdots, a_7\}$.
By Proposition \ref{uni} for any $10$-subtuple $\mathbf a_2=(a_{i_1},\cdots,a_{i_{10}})$ with (\ref{f7}) of $\mathbf a \in A(13)$ with $(a_3,a_4,a_7')\neq (3,6,3)$ and $\beta \in \mathbb Z^{10}$, $L_{\mathbf a_2,\beta}\otimes \Z_3$ is (even) universal. 
Hence for $\mathbf a \in A(13)$ with $(a_3,a_4,a_7)\neq (3,6,39)$, we need to take care local structure $L_{\mathbf a_2,\beta} \otimes \z_q$ only for $q \in \{2,13\}$.

\begin{itemize}
\item[1.]
We first consider $\mathbf a \in A(13)$ with $(a_1,a_2,a_3,a_4)=(1,2,3,7)$.

For $\mathbf a_2:=(a_1,\cdots,a_{10})$, if $a_i \not\equiv 0 \pmod{13}$ for some $8 \le i \le 10$, then $L_{\mathbf a_2,\mathbf e_1}\otimes \z_{13}$ is (even) univesal by Proposition \ref{uni} (2).
If $a_i \equiv 0 \pmod{13}$ for all $ 8 \le i \le 10$, then since the discriminant of $(\Z\alpha+\Z\beta,Q_{\mathbf a_2})$ 
$$Q_{\mathbf a_2}(\alpha)Q_{\mathbf a_2}(\beta)-B_{\mathbf a_2}(\alpha, \beta)^2\equiv 0 \cdot 1 -1 \nequiv 0 \pmod{13}$$ 
is a unit of $\z_{13}$ for $\beta:=\mathbf e_1 \in \Z^{10}$, we have that $\Z_{13}\alpha+\Z_{13}\mathbf e_1$ is unimodular.
By using Lemma \ref{nd}, we have that $L_{\mathbf a_2,\mathbf e_1}\otimes \Z_{13}(=(\Z_{13}\alpha+\Z_{13}\mathbf e_1)^{\perp})$ is isometric to $$\left<\frac{1\cdot 2 \cdot 3}{Q_{\mathbf a_2}(\alpha)Q_{\mathbf a_2}(\mathbf e_1)-1}\right>\perp\left<7\right>\perp\left<13\right>\perp\left<13a_6'\right>\perp \cdots \perp\left<a_{i_{10}}\right>.$$
Since $L_{\mathbf a_2,\mathbf e_1}\otimes \Z_{13}$ has a binay unimodular $\Z_{13}$-sublattice (which is isometric to $\left<\frac{1\cdot 2 \cdot 3}{Q_{\mathbf a_2}(\alpha)Q_{\mathbf a_2}(\mathbf e_1)-1}\right>\perp\left<7\right>$) and a binary $13$-modular $\Z_{13}$-sublattice (which is isometric to $\left<13\right>\perp\left<13a_6'\right>$),
$L_{\mathbf a_2,\mathbf e_1}\otimes \Z_{13}$ is (even) universal.

By using Lemma \ref{odd3} with $a_{j_1}=a_3=3, a_{j_2}=a_4=7, a_{j_3}=a_5=13$ and $\beta_{j_1}=\beta_{j_2}=\beta_{j_3}=0$, we show that $L_{\mathbf a_2,\mathbf e_1}\otimes \Z_2$ is even universal for $\mathbf a_2=(a_1,\cdots, a_{10})$.

Consequently, we conclude that when $\mathbf a \in A(13)$ with $(a_1,a_2,a_3,a_4)=(1,2,3,7)$, $L_{\mathbf a_2,\mathbf e_1}$ is locally even universal for $\mathbf a_2=(a_1,\cdots, a_{10})$.

\vskip 1em

\item[2.]
We secondly consider $\mathbf a \in A(13)$ with $(a_1,a_2,a_3,a_4)=(1,2,4,5)$.

Through similar arguments with the case of $(a_1,a_2,a_3,a_4)=(1,2,3,7)$, we could show that $L_{\mathbf a_2,\mathbf e_1}\otimes \Z_{13}$ is (even) universal for $\mathbf a_2=(a_1,\cdots,a_{10})$.

If $a_6'=1$, then we may use Lemma \ref{odd3}(3) with $a_{j_1}=a_4=5, a_{j_2}=a_5=13, a_{j_3}=a_6=13,$ and $a_{j_4}=a_2=2$ to assert that $L_{\mathbf a_2,\mathbf e_1}\otimes \Z_2$ is even universal where $\mathbf a_2=(a_1,\cdots,a_{10})$.
If $a_6'=2$, then by using the Lemma \ref{odd2} with $a_{j_1}=a_4=5, a_{j_2}=a_2=2,a_{j_3}=a_6=26,a_{j_4}=a_3=4,a_{j_5}=a_5=13$,
we may show that $L_{\mathbf a_2,\mathbf e_1}\otimes \Z_2$ where $\mathbf a_2=(a_1,\cdots,a_{10})$ has an even universal $\Z_2$-sublattice which is isometric to  $$\left<6\right>\perp\left<2\right>\perp\left<20\right>\perp\left<40\right>.$$
Therefore we obtain that when $\mathbf a \in A(13)$ with $(a_1,a_2,a_3,a_4)=(1,2,4,5)$, $L_{\mathbf a_2,\mathbf e_1}\otimes \Z_2$ is even universal for $\mathbf a_2=(a_1,\cdots, a_{10})$.

Consequently, we conclude that when $\mathbf a \in A(13)$ with $(a_1,a_2,a_3,a_4)=(1,2,4,5)$, $L_{\mathbf a_2,\mathbf e_1}$ is locally even universal for $\mathbf a_2=(a_1,\cdots,a_{10})$.

\vskip 1em

\item[3.]
Similarly with the above 1 and 2, one may show that when $\mathbf a \in A(13)$ with 
$$(a_1,a_2,a_3,a_4) \in \{(1,2,4,6), (1,2,4,8)\}$$ or 
$$(a_1,a_2,a_3,a_4)=(1,2,3,6) \text{ and } a_7\neq 39,$$ $L_{\mathbf a_2,\mathbf e_1}$ is locally even universal for $\mathbf a_2=(a_1,\cdots,a_{10})$.

\vskip 1em

\item[4.]
We now consider $\mathbf a \in A(13)$ with $(a_1,a_2,a_3,a_4)=(1,2,4,7)$.

For $\mathbf a_2=(a_1,\cdots,a_{10})$ and $\beta=\mathbf e_1-5\mathbf e_3+\mathbf e_4+\mathbf e_5 \in \Z^{10}$ with $B_{\mathbf a_2}(\alpha,\beta)=1$, 
we obtain that $L_{\mathbf a_2,\beta} \otimes \z_{13}=(\z_{13}\alpha+\z_{13}\beta)^{\perp}$ is even universal similarly with the above.

By using Lemma \ref{odd3}(1) with $a_{j_1}=a_1=1, a_{j_2}=a_4=7,a_{j_3}=a_5=13$, we show that $L_{\mathbf a_2,\beta}\otimes \Z_2$ is even universal.

Consequently, we conclude that when $\mathbf a \in A(13)$ with $(a_1,a_2,a_3,a_4)=(1,2,4,7)$, $L_{\mathbf a_2,\beta}$ is locally even universal for $\mathbf a_2=(a_1,\cdots,a_{10})$ and $\beta=\mathbf e_1-5\mathbf e_3+\mathbf e_4+\mathbf e_5 \in \Z^{10}$.

\vskip 1em

\item[5.]
We finally consider $\mathbf a \in A(13)$ with $(a_1,a_2,a_3,a_4,a_7)=(1,2,3,6,39)$.

Through a similar argument with the above, we have that $L_{\mathbf a_2,\mathbf e_1}\otimes \Z_{13}$ is (even) universal for $\mathbf a_2=(a_1,\cdots,a_{10})$.

If there is a unit of $\z_3$ in $\{a_8,a_9,a_{10}\}$, then by using Proposition \ref{uni}(2), we see that $L_{\mathbf a_2,\mathbf e_1}\otimes \z_3$ is (even) universal for $\mathbf a_2=(a_1,\cdots, a_{10})$.
If $a_8,a_9,$ and $a_{10}$ are all multiples of $3$, then for $\mathbf a_2=(a_1,\cdots,a_{10})$, from $Q_{\mathbf a_2}(\alpha) \not\equiv 1 \pmod{3}$, we have that $\z_3\alpha+\z_3\mathbf e_1$ is unimodular sublattice of $(\z_3^{10},Q_{\mathbf a_2})$.
By using Lemma \ref{nd}, we may see that $L_{\mathbf a_2,\mathbf e_1} \otimes \z_3=(\z_3\alpha+\z_3\mathbf e_1)^{\perp}$ contains a binary unimodular $\z_3$-sublattice and a binary $3$-modular $\z_3$-sublattice, yielding $L_{\mathbf a_2,\mathbf e_1} \otimes \z_3$ is (even) universal.

For $\mathbf a_2=(a_1,\cdots,a_{10})$, by using Lemma \ref{odd3}(1) with $a_{j_1}=a_3=3, a_{j_2}=a_5=13, a_{j_3}=a_7=13\cdot 3$, we may see that $L_{\mathbf a_2,\mathbf e_1} \otimes \Z_2$ is even universal.

Consequently, we conclude that when $\mathbf a \in A(13)$ with $(a_1,a_2,a_3,a_4,a_7) =(1,2,3,6,39)$, $L_{\mathbf a_2,\mathbf e_1}$ is locally even universal for $\mathbf a_2=(a_1,\cdots,a_{10})$.
\end{itemize}

\vskip 1em

$\bold{\ II. \ A(11)}$\\
Note that when $\mathbf a =(a_1,\cdots,a_{16}) \in A(11)$, we have 
\begin{align*}
(a_1,a_2,a_3,a_4) \in \{ & (1,1,3,5),\ (1,1,3,6), \ (1,2,2,5), \ (1,2,2,6), \ (1,2,3,4) ,\\
&                                      (1,2,3,5), \ (1,2,3,6), \ (1,2,3,7), \ (1,2,4,4), \ (1,2,4,5),\\
&                                      (1,2,4,6),\ (1,2,4,7), \ (1,2,4,8)\}
\end{align*}
and 
$$a_5,\ a_6, \ a_7 \in 11 \N.$$
When
\begin{align*}
(a_1,a_2,a_3,a_4) \in \{ & (1,1,3,5),\ (1,1,3,6), \ (1,2,2,5), \ (1,2,2,6), \ (1,2,3,4) ,\\
&                                      (1,2,3,5), \  (1,2,3,7), \ (1,2,4,4),\ (1,2,4,6), \ (1,2,4,8) \},
\end{align*}
one may show that $L_{\mathbf a_2, \mathbf e_1}$ is locally universal for the $10$-subtuple $\mathbf a_2=(a_1,\cdots,a_{10})$ of $\mathbf a$ through similar processings with $\bold{\ I. \  A(13)}$.
When 
$$(a_1,a_2,a_3,a_4) \in \{(1,2,3,6), \ (1,2,4,5), \ (1,2,4,7)\},$$
one may show that $L_{\mathbf a_2, -\mathbf e_1+\mathbf e_2}$ is locally universal for $10$-subtuple $\mathbf a_2=(a_1,\cdots,a_{10})$ of $\mathbf a$ through similar processings with $\bold{\ I. \  A(13)}$.
We omit the lengthy processings in this paper.

\vskip 1em

$\bold{\ III. \ A(7)}$\\
For $\mathbf a \in A(7)$, $\mathbf a$ satisfies one of the following three conditions
\begin{itemize}
\item[(1)]
There are five units and one prime element of $\Z_7$ in $\{a_1,a_2,\cdots, a_6\}$ by admitting a recursion and $a_7=49$.
\item[(2)]
There are four units of $\Z_7$ in $\{a_1,a_2,\cdots, a_7\}$ by admitting a recursion.
\item[(3)]
There are three units of $\Z_7$ in $\{a_1,a_2,\cdots, a_7\}$ by admitting a recursion. 
\end{itemize}

First we consider $\mathbf a \in A(7)$ satisfying (3), note that which is one of the forms
$$(1,2,3,7a_4',\cdots,7a_7',a_8,\cdots, a_{16}) \text{ or } (1,2,4,7a_4',\cdots,7a_7',a_8,\cdots, a_{16}) \in A.$$
Recall that based on Remark \ref{r1}, we examine for each $\mathbf a \in A(7) \setminus A(13) \cup A(11)$, whether there are $10$-subtuple $\mathbf a_2=(a_{i_1},\cdots,a_{i_{10}})$ of $\mathbf a$ with (\ref{f7}) and $\beta \in \mathbb Z^{10}$ for which $L_{\mathbf a_2, \beta}\otimes \mathbb Z_q$ are even universal for all $q \in \{ 2,3,5,7 \}$, yielding the $L_{\mathbf a_2,\beta}$ is locally even universal. 
From the construction of $A$, we have that $a_4'=1, a_5'\in \{1,2\},$ and $a_6'\in \{1,2,3,4\}$.
For $\mathbf a \in A(7)$ satisfying (3), we may use Proposition \ref{uni} to see that for any $10$-subtuple $\mathbf a_2$ of $\mathbf a$ with (\ref{f7}) and $\beta \in \Z^{10}$, $L_{\mathbf a_2,\beta} \otimes \Z_5$ is (even) universal.
Moreover, for $\mathbf a \in A(7)$ satisfying (3) if $(a_3,a_6',a_7') \not\in \{(3,3,3),(3,3,6) \}$, then either there are at least six units (by admitting a recursion) of $\Z_3$ or there are five units and two primes (by admitting a recursion) of $\Z_3$ in $\{a_1,a_2,\cdots, a_7\}$.
Therefore for any $10$-subtuple $\mathbf a_2$ with (\ref{f7}) of $\mathbf a$ with $(a_3,a_6,a_7) \not\in\{(3,21,21),(3,21,42)\}$ and $\beta \in \Z^{10}$, $L_{\mathbf a_2,\beta}\otimes \Z_3$ is (even) universal. 
Hence for cases of $(a_3,a_6,a_7) \not\in\{(3,21,21),(3,21,42)\}$, we only need to take care the local structure over $\z_p$ for $p=2$, $7$.
\begin{itemize}
\item[1.] First we consider $\mathbf a \in A(7)$ satisfying (3) with $a_3=3$ and $(a_6,a_7) \not\in\{(21,21),(21,42)\}$.
From the construction of $A(7)$, we have that
$$(a_4,a_5,a_6) \in \{(7,7,7),(7,7,14),(7,7,21),(7,14,14),(7,14,21),(7,14,28)\}.$$
If $(a_4,a_5,a_6)=(7,7,7)$, then for $\mathbf a_2=(a_1,\cdots,a_{10})$ and $\beta:=-\mathbf e_1-\mathbf e_2-\mathbf e_3-\mathbf e_4-\mathbf e_5+3\mathbf e_6 \in \Z^{10}$, we may use Lemma \ref{odd2} (with $a_{j_1}=a_1=1,a_{j_2}=a_2=2, a_{j_3}=a_3=3, a_{j_4}=a_4=7,a_{j_5}=a_5=7$) to show that $L_{\mathbf a_2,\beta}\otimes \Z_7$ contains an (even) universal $\Z_7$-sublattice which is isometric to
$$\left<2\cdot1\cdot3\right>\perp\left<3\cdot3\cdot6\right>\perp\left<7\cdot6\cdot13\right>\perp\left<7\cdot13\cdot20\right>,$$ which implies that $L_{\mathbf a_2,\beta} \otimes \Z_7$ is (even) universal.
And we may use Lemma \ref{odd3} (1) (with $a_{j_1}=1,a_{j_2}=a_3=3,a_{j_3}=a_4=7$) to yield that $L_{\mathbf a_2,\beta} \otimes \Z_2$ is even universal.
Consequently, we conclude that when $\mathbf a \in A(7)$ with $(a_1,\cdots,a_6)=(1,2,3,7,7,7)$, $L_{\mathbf a_2,\beta}$ is locally even universal for $\mathbf a_2=(a_1,\cdots,a_{10})$ and $\beta=-\mathbf e_1-\mathbf e_2-\mathbf e_3-\mathbf e_4-\mathbf e_5+3\mathbf e_6 \in \Z^{10}$.  

For the remaining $\mathbf a \in A(7)$ satisfying (3) with $a_3=3$ and $(a_6,a_7) \not\in\{(21,21),(21,42)\}$,
one may see that $L_{\mathbf a_2,\beta}$ is locally even universal for $\mathbf a_2=(a_1,\cdots,a_{10})$ and below $\beta \in \Z^{10}$ through a similar processing with the above.
We leave it to the reader.   
\begin{itemize}
\item[(1)]
$(a_4,a_5,a_6)=(7,7,7) \ ; \ \beta=-\mathbf e_1-\mathbf e_2-\mathbf e_3-\mathbf e_4-\mathbf e_5+3\mathbf e_6$.
\item[(2)]
$(a_4,a_5,a_6)=(7,7,14) \ ;  \ \beta=-\mathbf e_1-\mathbf e_2-\mathbf e_3-\mathbf e_4+4\mathbf e_5-\mathbf e_6$.
\item[(3)]
$(a_4,a_5,a_6)=(7,7,21) \ ; \ \beta=-\mathbf e_1-\mathbf e_2-\mathbf e_3-\mathbf e_4-\mathbf e_5+\mathbf e_6$.
\item[(4)]
$(a_4,a_5,a_6)=(7,14,14) \ ; \ \beta=-\mathbf e_1-\mathbf e_2-\mathbf e_3-\mathbf e_4-\mathbf e_5+2\mathbf e_6$.
\item[(5)]
$(a_4,a_5,a_6)=(7,14,21) \ ;  \ \beta=-\mathbf e_1-\mathbf e_2-\mathbf e_3+6\mathbf e_4-\mathbf e_5-\mathbf e_6$.
\item[(6)]
$(a_4,a_5,a_6)=(7,14,28) \ ; \ \beta=-\mathbf e_1-\mathbf e_2-\mathbf e_3-\mathbf e_4-\mathbf e_5+\mathbf e_6$.
\end{itemize}

\item[2.]Consider $\mathbf a \in A(7)$ satisfying (3) and $(a_3, a_6,a_7) \in \{ (3,21,21), (3,21,42)\}$.

One may see that $L_{\mathbf a_2,\beta} \otimes \Z_7$ is (even) universal for $\mathbf a_2=(a_1,\cdots,a_{10})$ and $\beta=-\mathbf e_1-\mathbf e_2-\mathbf e_3+6\mathbf e_4-\mathbf e_5-\mathbf e_6 \in \Z^{10}$ through a similar processing with the above.

If $a_i \nequiv 0 \pmod{3}$ for some $8 \le i \le 10$, then by Proposition \ref{uni}, $L_{\mathbf a_2,\beta}\otimes \Z_{3}$ is (even) universal for $\mathbf a_2=(a_1,\cdots,a_{10})$ and $\beta=-\mathbf e_1-\mathbf e_2-\mathbf e_3+6\mathbf e_4-\mathbf e_5-\mathbf e_6 \in \Z^{10}$.
If $a_i \equiv 0 \pmod{3}$ for all $8 \le i \le 10$, then since the discriminant of $(\z_3 \alpha+\z_3\beta,Q_{\mathbf a_2})$ is a unit of $\z_3$ for $\mathbf a_2=(a_1,\cdots,a_{10})$ and $\beta=-\mathbf e_1-\mathbf e_2-\mathbf e_3+6\mathbf e_4-\mathbf e_5-\mathbf e_6 \in \Z^{10}$, $(\z_3 \alpha+\z_3\beta,Q_{\mathbf a_2})$ is a unimodular sublattice of $(\z_3^{10},Q_{\mathbf a_2})$.
By using Lemma \ref{nd}, we have that $L_{\mathbf a_2,\beta} \otimes \Z_3(=(\Z_3\alpha+\Z_3\beta)^{\perp})$ is (even) universal for $\mathbf a_2=(a_1,\cdots,a_{10})$ and $\beta=-\mathbf e_1-\mathbf e_2-\mathbf e_3+6\mathbf e_4-\mathbf e_5-\mathbf e_6 \in \Z^{10}$.

Lastly, we may use Lemma \ref{odd3} (1) (with $a_{j_1}=a_1=1,a_{j_2}=a_3=3,a_{j_3}=a_6=7\cdot 3$) to show that $L_{\mathbf a_2,\beta} \otimes \Z_2$ is even universal for $\mathbf a_2=(a_1,\cdots,a_{10})$ and $\beta=-\mathbf e_1-\mathbf e_2-\mathbf e_3+6\mathbf e_4-\mathbf e_5-\mathbf e_6 \in \Z^{10}$.

Consequently, when $\mathbf a \in A(7)$ satisfies (3)  and $(a_3, a_6,a_7) \in \{ (3,21,21),$  $(3,21,42)\}$,  $L_{\mathbf a_2,\beta}$  is locally even universal for $\mathbf a_2=(a_1,\cdots,a_{10})$ and $\beta=-\mathbf e_1-\mathbf e_2-\mathbf e_3+6\mathbf e_4-\mathbf e_5-\mathbf e_6 \in \Z^{10}$.

\item[3.]Next consider $\mathbf a \in A(7)$ satisfying (3), $a_3=4$, and $a_i \not\equiv 0 \pmod{7}$ for some $8 \le i \le 16$.

By Proposition \ref{uni}, $L_{\mathbf a_2,\beta}\otimes \Z_q$ is (even) universal for any $10$-subtuple $\mathbf a_2$ with (\ref{f7}) of $\mathbf a$ and a vector $\beta \in \z^{10}$ with $B_{Q_{\mathbf a_2}}(\alpha, \beta)=1$ for every odd prime $q \not=7$.

We take $10$-subtuple $\mathbf a_2=(a_1,\cdots,a_7,a_{i_8},a_{i_9},a_{i_{10}})$ of $\mathbf a$ for which one of $a_{i_8},a_{i_9},$ and $a_{i_{10}}$ as an unit of $\Z_{7}$.
Since for $\beta^{(1)}:=\mathbf e_1$ and $\beta^{(2)}:=-\mathbf e_1+\mathbf e_2$ in $\Z^{10}$, one of the discriminant of $(\z_7\alpha+\z_7\beta^{(1)},Q_{\mathbf a_2})$  and $(\z_7\alpha+\z_7\beta^{(2)},Q_{\mathbf a_2})$ is an unit of $\z_7$, $(\z_7\alpha+\z_7\beta^{(1)},Q_{\mathbf a_2})$ or $(\z_7\alpha+\z_7\beta^{(2)},Q_{\mathbf a_2})$ is unimodular.
By using Lemma \ref{nd}, we may show that $L_{\mathbf a_2,\beta^{(1)}}\otimes \z_7$ or $L_{\mathbf a_2,\beta^{(2)}}\otimes \z_7$ is (even) universal.

For $\mathbf a \in A(7)$ satisfying (3) with $a_3=4$, by using Lemma \ref{odd3} and Lemma \ref{odd2}, one may show that both $L_{\mathbf a_2,\beta^{(1)}}\otimes \z_2$ and $L_{\mathbf a_2,\beta^{(2)}}\otimes \z_2$ are even universal for any $10$-subtuple of $\mathbf a_2$ with (\ref{f7}) of $\mathbf a$.

Consequently, we may conclude that when $\mathbf a \in A(7)$ satisfies (3), $a_3=4$, and $a_i \not\equiv 0 \pmod{7}$ for some $8 \le i \le 16$,  $L_{\mathbf a_2,\beta}$ is locally even universal for some $10$-subtuple $\mathbf a_2=(a_1,\cdots,a_7,a_{i_8},a_{i_9},a_{i_{10}})$ of $\mathbf a$ with $a_{i_k} \in \z_7^{\times}$ for some $8 \le k \le 10$  and $\beta \in \{\mathbf e_1,-\mathbf e_1+\mathbf e_2 \}$.

\item[4.]Let $\mathbf a \in A(7)$ satisfy (3), $a_3=4$, and $a_i \equiv 0 \pmod{7}$ for all $8 \le i \le 16$.

For such $\mathbf a$, there is something serious issue which is $L_{\mathbf a_2,\beta}$ is never locally even universal for any $10$-subtuple $\mathbf a_2$ of $\mathbf a$ and $\beta \in \Z^{10}$ with $B_{\mathbf a_2}(\alpha, \beta)=1$.
More precisely, $L_{\mathbf a_2,\beta} \otimes \Z_7$ is never (even) universal for any $10$-subtuple $\mathbf a_2$ of $\mathbf a$ and $\beta \in \Z^{10}$ with $B_{\mathbf a_2}(\alpha, \beta)=1$. 
We postpone to treat these candidates to Section 5.

\item[5.] Let $\mathbf a \in A(7)$ satisfy (1).\\
Then we have that 
$$(a_1,a_2,a_3,a_4) \in \{(1,2,3,7),(1,2,4,6),(1,2,4,7),(1,2,4,8)\}.$$
When $(a_1,a_2,a_3,a_4)=(1,2,3,7)$, we have that $(a_5,a_6) \in \{(11,24), (11,25), $ $(12,23),(12,24),(12,25),(12,26),(13,22),(13,23),(13,24),(13,25),(13,26), $ $(13,27)\}$. 
One may show that $L_{\mathbf a_2,-\mathbf e_1+\mathbf e_2}$ is locally even universal for $10$-subtuple $\mathbf a_2=(a_1,\cdots ,a_{10})$ of $\mathbf a \in A(7)$ satisfying (1) with $(a_1,a_2,a_3,a_4)=(1,2,3,7)$ case by case.\\
When $(a_1,a_2,a_3,a_4)=(1,2,3,6)$, we have that $(a_5,a_6) \in \{ (14,22),(14,23), $ $(14,24),(14,25),(14,26),(14,27) \}$.
One may show that $L_{\mathbf a_2,-\mathbf e_1+\mathbf e_2}$ (resp, $L_{\mathbf a_2,26\mathbf e_1-26\mathbf e_2+\mathbf e_6}$) is locally even universal for $10$-subtuple $\mathbf a_2=(a_1,\cdots ,a_{10})$ of $\mathbf a \in A(7)$ satisfying (1) with $(a_1,a_2,a_3,a_4)=(1,2,3,6)$ and $(a_5,a_6) \not=(14,27)$ (resp, $(a_5,a_6) =(14,27)$) case by case.\\
When $(a_1,a_2,a_3,a_4) \in \{ (1,2,4,7), (1,2,4,8) \}$, $L_{\mathbf a_2,\mathbf e_1}$ or $L_{\mathbf a_2,-\mathbf e_1+\mathbf e_2}$ is locally even universal for $10$-subtuple $\mathbf a_2=(a_1,\cdots ,a_{10})$ of $\mathbf a \in A(7)$ satisfying (1).\\

\item[6.] Let $\mathbf a \in A(7)$ satisfy (2).\\
Then we have that
\begin{align*}
(a_1,a_2,a_3,a_4) \in \{ & (1,1,1,3),(1,1,1,4),(1,1,2,2),(1,1,2,3), (1,1,2,4),\\
& (1,1,2,5),(1,1,3,3),(1,1,3,4),(1,1,3,5),(1,1,3,6),\\
&(1,2,2,2),(1,2,2,3),(1,2,2,4),(1,2,2,5),(1,2,2,6),\\
&(1,2,3,3),(1,2,3,4),(1,2,3,5),(1,2,3,6),(1,2,3,7),\\
&(1,2,4,4),(1,2,4,5),(1,2,4,6),(1,2,4,7),(1,2,4,8)\}.
\end{align*}
 When $(a_1,a_2,a_3,a_4) \in \{(1,1,1,3),(1,1,1,4),(1,1,2,2),(1,1,2,3),(1,1,2,5),$ $(1,1,3,4),(1,1,3,5),(1,1,3,6),(1,2,2,2),(1,2,2,4),(1,2,2,5),(1,2,2,6), (1,2,3,6),$  $(1,2,4,4),(1,2,4,6)\}$, $L_{\mathbf a_2,\mathbf e_1}$ is locally even universal for $10$-subtuple $\mathbf a_2=(a_1,\cdots ,a_{10})$ of $\mathbf a$.\\
When $(a_1,a_2,a_3,a_4) \in \{(1,2,2,3),(1,2,3,3), (1,2,3,4), (1,2,3,5), (1,2,4,8)\}$, $L_{\mathbf a_2,-\mathbf e_1+\mathbf e_2}$ is locally even universal for $10$-subtuple $\mathbf a_2=(a_1,\cdots ,a_{10})$ of $\mathbf a$.\\
When $(a_1,a_2,a_3,a_4)=(1,1,2,4)$, $L_{\mathbf a_2,-\mathbf e_1+\mathbf e_3}$ is locally even universal for $10$-subtuple $\mathbf a_2=(a_1,\cdots ,a_{10})$ of $\mathbf a$.\\
When $(a_1,a_2,a_3,a_4)=(1,2,4,5)$, $L_{\mathbf a_2,-\mathbf e_3+\mathbf e_4}$ is locally even universal for $10$-subtuple $\mathbf a_2=(a_1,\cdots ,a_{10})$ of $\mathbf a$.\\
When $(a_1,a_2,a_3,a_4)=(1,1,3,3)$, $L_{\mathbf a_2,-\mathbf e_3-\mathbf e_4-\mathbf e_5+\frac{14}{a_6}\mathbf e_6}$ is locally even universal for $10$-subtuple $\mathbf a_2=(a_1,\cdots ,a_{10})$ of $\mathbf a$.\\
When $(a_1,a_2,a_3,a_4)=(1,2,3,7)$ and $a_5\not=14$ (resp, $a_5=14$), $L_{\mathbf a_2,\beta}$ (resp, $L_{\mathbf a_2,-\mathbf e_1-\mathbf e_2-\mathbf e_3-\mathbf e_4+\mathbf e_5}$) is locally even universal for some $\beta \in \{\mathbf e_1, -\mathbf e_1+\mathbf e_2, -\mathbf e_2+\mathbf e_3, 4\mathbf e_2-\mathbf e_4, -2\mathbf e_1+\mathbf e_3\}$ and $10$-subtuple $\mathbf a_2=(a_1,\cdots ,a_{10})$ of $\mathbf a$.\\
When $(a_1,a_2,a_3,a_4)=(1,2,4,7)$,  $L_{\mathbf a_2,\mathbf e_1},L_{\mathbf a_2,-\mathbf e_1+\mathbf e_2},$ or $L_{\mathbf a_2,2\mathbf e_3-\mathbf e_4}$ is locally even universal for $10$-subtuple $\mathbf a_2=(a_1,\cdots ,a_{10})$ of $\mathbf a$.\\

\end{itemize}

\vskip 1em

$\bold{\ IV. \ A(5)}$ \\
For $\mathbf a \in A(5)$, $\mathbf a$ satisfies one of the following three conditions
\begin{itemize}
\item[(1)]
There are five units and at most one prime of $\Z_5$ in $\{a_1,a_2,\cdots, a_7\}$ by admitting a recursion.
\item[(2)]
There are four units of $\Z_5$ in $\{a_1,a_2,\cdots, a_7\}$ by admitting a recursion.
\item[(3)]
There are three units of $\Z_5$ in $\{a_1,a_2,\cdots, a_7\}$ by admitting a recursion. 
\end{itemize}
Based on Remark \ref{r1}, we examine for each $\mathbf a \in A(5) \setminus A(13)\cup A(11)\cup A(7)$, whether there are $10$-subtuple $\mathbf a_2$ of $\mathbf a$ with (\ref{f7}) and $\beta \in \mathbb Z^{10}$ with $B_{\mathbf a_2}(\alpha, \beta)=1$ for which $L_{\mathbf a_2, \beta}\otimes \mathbb Z_q$ are even universal for all $q \in \{ 2,3,5 \}$ yielding the $L_{\mathbf a_2,\beta}$ is locally even universal. 

Through similar processing with the above by using Proposition \ref{uni}, Lemma \ref{nd}, Lemma \ref{odd3}, Lemma \ref{odd2} and Lemma \ref{12}, one may see that $L_{\mathbf a_2,\beta}$ is locally even universal for some $10$-subtuple $\mathbf a_2$ of $\mathbf a$ with (\ref{f7}) and $\beta \in \mathbb Z^{10}$ with $B_{\mathbf a_2}(\alpha, \beta)=1$ for each $\mathbf a \in A(5)$ case by case except of the forms
\begin{equation} \label{5-1}(1,1,3,6,10,15a_6',15a_7',3a_8'\cdots, 3a_{16}') \in A(5) \text{ satisfying }(2),  \end{equation}
\begin{equation} \label{5-2} (1,1,3,5a_4',\cdots, 5a_{16}') \text{ and } (1,2,2,5a_4',\cdots, 5a_{16}') \in A(5) \text{ satisfying }(3).  \end{equation}
For the above $\mathbf a \in A(5)$, there is something serious issue which is $L_{\mathbf a_2,\beta}$ is never locally even universal for any $10$-subtuple $\mathbf a_2$ of $\mathbf a$ and $\beta \in \Z^{10}$ with $B_{\mathbf a_2}(\alpha, \beta)=1$.
For the candidates $\mathbf a \in A(5)$ in (\ref{5-1}), $L_{\mathbf a_2,\beta}\otimes \Z_3$ is never (even) universal and for the candidates $\mathbf a \in A(5)$ in  (\ref{5-2}), $L_{\mathbf a_2,\beta}\otimes \Z_5$ is never (even) universal.
We postpone to treat these candidates to Section 5.

\vskip 1em

$\bold{\ V. \ A(3)}$\\
When $\mathbf a \in A(3)$, $\mathbf a$ satisfies one of the following four conditions
\begin{itemize}
\item[(1)]
There are five units and at most one prime of $\Z_3$ in $\{a_1,a_2,\cdots, a_7\}$ by admitting a recursion.
\item[(2)]
There are four units of $\Z_3$ in $\{a_1,a_2,\cdots, a_7\}$ by admitting a recursion.
\item[(3)]
There are three units of $\Z_3$ in $\{a_1,a_2,\cdots, a_7\}$ by admitting a recursion. 
\item[(4)]
There are two units of $\Z_3$ in $\{a_1,a_2,\cdots, a_7\}$ by admitting a recursion. 
\end{itemize}
Based on Remark \ref{r1}, we examine for each $\mathbf a \in A(3) \setminus A(13)\cup A(11)\cup A(7)\cup A(5)$, whether there are $10$-subtuple $\mathbf a_2$ of $\mathbf a$ with (\ref{f7}) and and $\beta \in \mathbb Z^{10}$ for which $L_{\mathbf a_2, \beta}\otimes \mathbb Z_q$ are even universal for all $q \in \{ 2,3 \}$, yielding the $L_{\mathbf a_2,\beta}$ is locally even universal. 

Going through a similar process with the above by using Proposition \ref{uni}, Lemma \ref{nd}, Lemma \ref{odd3}, Lemma \ref{odd2} and Lemma \ref{12} with case by case consideration, one may check that there are $10$-subtuple $\mathbf a_2$ of $\mathbf a$ with (\ref{f7}) and $\beta \in \mathbb Z^{10}$ for which $L_{\mathbf a_2,\beta}$ is locally even universal for each $\mathbf a \in A(3)$ except of the forms
\begin{equation} \label{3-1}(1,2,3,6,8,8a_6',24,8a_8',\cdots, 8a_{16}') \in A(3) \text{ satisfying }(2),  \end{equation}
\begin{equation} \label{3-2} (1,1,a_3,\cdots,a_{16})\in A(3) \text{ satisfying }(3) \text{ or } (4)  \end{equation}
for which there is only one unit of $\z_3$ in $\{a_3,\cdots,a_{16}\}$ by admitting a recursion and $a_3+\cdots+a_{16} \equiv 1 \pmod{3}$, or
\begin{equation} \label{3-3}(1,1,3a_3',\cdots, 3a_{16}') \text{ and } (1,2,3a_3',\cdots, 3a_{16}')\in A(3) \text{ satisfying } (4). \end{equation}
We postpone to treat these exceptional candidates $\mathbf a$ to Section 5 where it is turned out that $L_{\mathbf a_2,\beta}$ is never locally even universal for any $10$-subtuple $\mathbf a_2$ of $\mathbf a$ and $\beta \in \Z^{10}$ with $B_{\mathbf a_2}(\alpha, \beta)=1$.
We might add in advance that for $\mathbf a \in A(3)$ of the form in (\ref{3-1}), $L_{\mathbf a_2,\beta}\otimes \Z_2$ is never even universal and for $\mathbf a \in A(3)$ of the form in (\ref{3-2}) or (\ref{3-3}), $L_{\mathbf a_2,\beta}\otimes \Z_3$ is never (even) universal.

\vskip 1em

$\bold{\ VI. \ A(2)}$ \\
Let $$A(2):=A\setminus \bigcup \limits_{p \in S}A(p) \text{ where } S=\{3,5,7,11,13\}.$$
Based on Remark \ref{r1}, we examine for each $\mathbf a \in A(2)$, whether there are $10$-subtuple $\mathbf a_2$ of $\mathbf a$ with (\ref{f7}) and and $\beta \in \mathbb Z^{10}$ for which $L_{\mathbf a_2, \beta}\otimes \mathbb Z_2$ is even universal, yielding the $L_{\mathbf a_2,\beta}$ is locally even universal. 

Going through a similar processing with the above by using Lemma \ref{odd3}, Lemma \ref{odd2} and Lemma \ref{12}, 
one may see that $L_{\mathbf a_2,\beta}$ is locally even universal for some $10$-subtuple $\mathbf a_2$ of $\mathbf a$ with (\ref{f7}) and $\beta \in \mathbb Z^{10}$ with $B_{\mathbf a_2}(\alpha, \beta)=1$ for each $\mathbf a \in A(2)$ case by case except  the candidates in $A'(2)$ of Table \ref{fd}.

\vskip 1em
\begin{rmk}
To finish this lengthy calculations, some computer programs were used.
In this chapter, we showed that for each $\mathbf a \in A \setminus A'$ except some dropouts in $A':=A'(2)\cup A'(3)\cup A'(5)\cup A'(7)$ with $A'(p)\subset A(p)$ in Table \ref{fd}, there is a constant $C_{\mathbf a}>0$ for which the universality of any $m$-gonal form having its first $16$ coefficients as $\mathbf a $ is characterized by the representability of every positive integer up to $C_{\mathbf a}(m-2)$.
The remaining dropouts $\mathbf a \in A'$ in Table \ref{fd} are treated in Section 5 and Section 6.
\end{rmk}

\begin{center}
\begin{table} 
\caption{Dropouts} \label{fd}
\label{t3}
    \begin{tabular}{ | c | l |}
     \hline
   \rule[-2.4mm]{0mm}{7mm}  $A'(7)$       &  $(1,2,4,7a_4',\cdots, 7a_{16}')$   \\
    \hline
\rule[-2.4mm]{0mm}{7mm}  $A'(5)$      
 & $(1,1,3,3,6,15,25,3a_8',\cdots,3a_{16}'),$\\
 & $(1,1,3,3,10,15,15a_7',3a_8',\cdots,3a_{16}'),$\\
& $(1,1,3,6,6,15,25,3a_8',\cdots,3a_{16}'),$\\
& $(1,1,3,6,10,15a_6',15a_7',3a_8'\cdots, 3a_{16}'),$\\
& $(1,1,3,6,12,15,25,3a_8',\cdots,3a_{16}')$\\
& $(1,1,3,5a_4',\cdots, 5a_{16}'),  $  \\
& $(1,2,2,5a_4',\cdots, 5a_{16}') $  \\
  \hline 

\rule[-2.4mm]{0mm}{7mm}  $A'(3)$       &  $ (1,1,3a_3',\cdots, 3a_{16}'),$   \\
& $(1,1,a_3,\cdots, a_{16})$ with $\{a_3,\cdots,a_{16} \pmod3\}=\{1,0,\cdots,0\}$,\\
& $(1,2,3a_3',\cdots, 3a_{16}'),$ \\
& $(1,2,3,6,8,8a_6',24,8a_8',\cdots, 8a_{16}')$ \\

  \hline

\rule[-2.4mm]{0mm}{7mm}  $A'(2)$       
& $(1,2,2,3,8a_5',\cdots,8a_{16}'),$\\
&  $ (1,2,2,5,8a_5',\cdots, 8a_{16}'), $   \\
& $(1,2,2,5,10,16a_5',\cdots, 16a_{16}') ,$\\
& $ (1,2,3,6,8a_5',\cdots, 8a_{16}'),$\\
& $(1,2,4,4,5,16a_6',\cdots, 16a_{16}'),$\\
& $(1,2,4,4,7,16a_5',\cdots, 16a_{16}'),$\\
& $(1,2,4,4,9,16a_5',\cdots, 16a_{16}'), $\\
 & $(1,2,4,4,11,16a_5',\cdots, 16a_{16}'),$\\ 
 & $ (1,2,4,5,12,16a_5',\cdots, 16a_{16}'), $\\ 
& $(1,2,4,7,12,16a_5',\cdots, 16a_{16}') $\\

  \hline
\end{tabular}
\end{table}
\end{center}

\section{Generalization}
In Section 5 and Section 6, we show that for each $\mathbf a \in A'$, there is a constant $C_{\mathbf a}>0$ for which the universality of any $m$-gonal form having its first $16$ coefficients as $\mathbf a $ is characterized by the representability of every positive integer up to $C_{\mathbf a}(m-2)$.
As a first step to deal with the dropouts $\mathbf a \in A'$, we suggest an improvement of Lemma \ref{lu}.
Under the same assumptions and notations used in Lemma \ref{lu}, for each $0\le t_1 \le s+1$ and $-1 \le r_1 \le 2^{16}-1$, suppose that there are $10$-subtuple $\mathbf a_2$ of $\mathbf a$ and $\beta(t_1,r_1) \in \Z^{10}$ with $B_{\mathbf a_2}(\alpha, \beta(t_1,r_1))=1$ for which $$Q_{\mathbf a_2}(\mathbf x(t_1,r_1))+r_1^2Q_{\mathbf a_2}(\beta(t_1,r_1))-r_1 \equiv 2t_1 \pmod{2s}$$
for some $\mathbf x(t_1,r_1)\in L_{\mathbf a_2,\beta(t_1,r_1)}$. 
Then the $10$-ary sub $m$-gonal form $\Delta_{m,\mathbf a_2}(\mathbf x)$ of $\Delta_{m,\mathbf a}(\mathbf x)$ may represent all the $t_1(m-2)+r_1$'s modulo $s(m-2)$ in the interval 
$$\left[0, \ \frac{m-2}{2} \cdot \max \limits_{(t_1,r_1)}\{Q_{\mathbf a_2}(\mathbf x(t_1,r_1))+r_1^2Q_{\mathbf a_2}(\beta(t_1,r_1))-r_1 \}+(m-2)\right]$$
for 
$$\Delta_{m,\mathbf a_2}(\mathbf x(t_1,r_1)+r_1\beta(t_1,r_1))=\frac{m-2}{2}\left\{Q_{\mathbf a_2}(\mathbf x(t_1,r_1))+r_1^2Q_{\mathbf a_2}(\beta(t_1,r_1))-r_1\right\}+r_1.$$
Actually, in Lemma \ref{lu}, the sole vector $\beta \in \Z^{10}$ fulfil the all of duties for  all $\beta(t_1, r_1)\in \Z^{10}$ where $0\le t_1 \le s+1$ and $-1 \le r_1 \le 2^{16}-1$.

In the following lemma, as a kind of generalization of Lemma \ref{lu}, we suggest a sufficient local condition to exist such the above $\beta(t_1, r_1) \in \Z^{10}$'s for each $0\le t_1 \le s+1$ and $-1 \le r_1 \le 2^{16}-1$.

\begin{lem} \label{lu'}
For a prime $p_0$, if there are a $10$-subtuple $\mathbf a_2:=(a_{i_1},\cdots,a_{i_{10}})$ of $(a_1,\cdots,a_{16}) \in A$ and $N$ vectors $\beta^{(k)}\in \mathbb Z^{10}$ with $B_{\mathbf a_2}(\alpha,\beta^{(k)})=1$ for all $1\le k \le N$ satisfying the following two local conditions

\begin{equation} \label{2-1}
L_{\mathbf a_2, \beta^{(k)}}\otimes \mathbb Z_p \text{ are even universal for all primes $p\not=p_0$}  
\end{equation}
for all $ 1\le k \le N$ and
\begin{equation} \label{2-2}
\bigcup\limits_{k=1}^N\{Q_{\mathbf a_2}(\mathbf x)+r_1^2\cdot Q_{\mathbf a_2}(\beta^{(k)})^2-r_1 |\mathbf x \in L_{\mathbf a_2, \beta^{(k)}} \otimes \mathbb Z_{p_0} \} =2 \mathbb Z_{p_0} 
\end{equation}
for each $r_1 \in \z_{p_0}$, then there is a constant $$C_{(a_1,\cdots, a_{16})}>0$$
for which having its first $16$ coefficients as $(a_1,\cdots ,a_{16}) \in A$ $m$-gonal form with $m>2^{16}-3$ $$\Delta_{m,\mathbf a}(\mathbf x)= \sum\limits_{i=1}^n a_iP_m(x_i)$$ which represents every positive integer up to $m-4$ represents every positive integer greater than $C_{(a_1,\cdots,a_{16})}(m-2)$.
\end{lem}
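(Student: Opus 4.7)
The plan is to mirror the architecture of the proof of Lemma \ref{lu}, replacing the single vector $\beta$ with the family $\{\beta^{(k)}\}_{k=1}^{N}$. By virtue of Corollary \ref{6}, it will suffice to show that the $(n-6)$-ary sub $m$-gonal form $\sum_{j=1}^{10} a_{i_j} P_m(x_{i_j}) + \sum_{i=17}^{n} a_i P_m(x_i)$ represents complete residues modulo $s(m-2)$ in an interval $[0, C_{(a_1,\ldots,a_{16})}(m-2)]$. Following the argument used in Lemma \ref{lu}, I would decompose the representation $\Delta_{m,\mathbf{a}}(\mathbf{x}(r)) = r$ (for $0 \le r \le m-3$) as $r = r_1 + r_2$ with $r_1$ the contribution of the first 16 coordinates and $r_2$ that of the rest, and note $r_1 \equiv r_1' \pmod{m-2}$ with $r_1' \in \{-1,0,1,\ldots,2^{16}-1\}$. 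The task is then, for each pair $(t,r_1')$ with $0 \le t \le s-1$, to produce an index $k=k(t,r_1')$ and a vector $\mathbf{x}(t,r_1') \in L_{\mathbf{a}_2, \beta^{(k)}}$ with
\[
Q_{\mathbf{a}_2}(\mathbf{x}(t,r_1')) + (r_1')^2 Q_{\mathbf{a}_2}(\beta^{(k)}) - r_1' \equiv 2t \pmod{2s},
\]
so that by (\ref{eq2}), $\Delta_{m,\mathbf{a}_2}(\mathbf{x}(t,r_1') + r_1'\beta^{(k)}) \equiv t(m-2) + r_1 \pmod{s(m-2)}$.

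The crucial input from the hypotheses is used as follows. First, because $B_{\mathbf{a}_2}(\alpha, \beta^{(k)}) = \sum_j a_{i_j}\beta^{(k)}_j = 1$ is odd, the number of $j$ with both $a_{i_j}$ and $\beta^{(k)}_j$ odd is odd; hence $Q_{\mathbf{a}_2}(\beta^{(k)}) = \sum_j a_{i_j}(\beta^{(k)}_j)^2$ is odd, so $(r_1')^2 Q_{\mathbf{a}_2}(\beta^{(k)}) - r_1'$ is always even and the target $2t - (r_1')^2 Q_{\mathbf{a}_2}(\beta^{(k)}) + r_1'$ is even. Next, condition (\ref{2-2}) applied to the residue $r_1 = r_1'$ and the even element $2t$ supplies an index $k$ and a $p_0$-adic vector realising the congruence at $p_0$. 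For every prime $p \neq p_0$, condition (\ref{2-1}) gives even universality of $L_{\mathbf{a}_2, \beta^{(k)}} \otimes \mathbb{Z}_p$, hence local representation of the (even) target at $p$. So the target is locally represented at every prime. A sufficiently large positive integer congruent to the target modulo $2s$ can then be produced (by adding a multiple of $2s$ and invoking Hensel-type openness at $p_0$), and Theorem \ref{hkk}, applied to the rank-$8$ lattice $L_{\mathbf{a}_2, \beta^{(k)}}$, globalises to a bona fide $\mathbf{x}(t,r_1') \in L_{\mathbf{a}_2, \beta^{(k)}}$ with the desired square. Finally, set
\[
C_{(a_1,\ldots,a_{16})} := \max\left\{\tfrac{Q_{\mathbf{a}_2}(\mathbf{x}(t,r_1')) + (r_1')^2 Q_{\mathbf{a}_2}(\beta^{(k(t,r_1'))})-r_1'}{2} \,\Big|\, 0\le t \le s-1,\ -1 \le r_1' \le 2^{16}-1\right\} + 1
\]
and estimate $\Delta_{m,\mathbf{a}_2}(\mathbf{x}(t,r_1') + r_1'\beta^{(k)}) + r_2 \le C_{(a_1,\ldots,a_{16})}(m-2)$ exactly as in Lemma \ref{lu}.

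The main obstacle, as I see it, is ensuring that the size control on the chosen $\mathbf{x}(t,r_1')$ is uniform and depends only on $(a_1,\ldots,a_{16})$, not on $m$. This comes down to the fact that the pool of relevant $\beta^{(k)}$ (finite in number), the lattices $L_{\mathbf{a}_2,\beta^{(k)}}$ and their Theorem \ref{hkk} constants $c(L_{\mathbf{a}_2,\beta^{(k)}})$, and the pairs $(t,r_1')$ are all finite and determined by $(a_1,\ldots,a_{16})$ alone; one needs to verify that the "local-to-global jump" provided by Theorem \ref{hkk} can be applied with a target that is merely the original one plus a bounded multiple of $2s$, so that the resulting upper bound on $Q_{\mathbf{a}_2}(\mathbf{x}(t,r_1'))$ is independent of $m$. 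Once that bookkeeping is done, the argument closes exactly as in Lemma \ref{lu}.
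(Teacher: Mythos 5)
Your proposal is correct and follows essentially the same route as the paper's own proof: for each pair $(t_1,r_1')$ use condition (\ref{2-2}) to pick a suitable $\beta^{(k)}$ at $p_0$, condition (\ref{2-1}) at the other primes, and Theorem \ref{hkk} to globalise, then take the constant as a maximum over the finitely many pairs. In fact you supply details the paper leaves implicit (the parity of $Q_{\mathbf a_2}(\beta^{(k)})$ forced by $B_{\mathbf a_2}(\alpha,\beta^{(k)})=1$, and the congruence/openness bookkeeping needed to feed a sufficiently large integer into Theorem \ref{hkk}), and you correctly use $Q_{\mathbf a_2}(\beta^{(k)})$ rather than its square, consistent with (\ref{eq2}).
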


\begin{proof}
For each $0 \le t_1 \le s+1$ and $-1\le r_1 \le 2^{16}-1$, 
$$Q_{\mathbf a_2}(\mathbf x)+r_1^2Q_{\mathbf a_2}(\beta(t_1,r_1))-r_1 \equiv 2t_1 \pmod{2s}$$ has an integer solution $\mathbf x(t_1,r_1) \in \z^n$ for some $\beta(t_1,r_1) \in \{\beta^{(1)},\cdots, \beta^{(N)}\}$ by Theorem \ref{hkk}. 
Which induces that for any $0 \le t_1 \le s+1$ and $-1\le r_1 \le 2^{16}-1$,
$$\begin{cases}\Delta_{m,\mathbf a_2}(\mathbf x(t_1,r_r)+r_1\beta(t_1,r_1))\equiv t_1(m-2)+r_1 \pmod{s(m-2)}\\
0\le \Delta_{m,\mathbf a_2}(\mathbf x(t_1,r_r)+r_1\beta(t_1,r_1)) \le C_{(a_1,\cdots,a_{16})}(m-2)\end{cases}$$  
hold where $$C_{(a_1,\cdots,a_{16})}:=\frac{1}{2}\max \limits_{(t_1,r_1)}\{Q_{\mathbf a_2}(\mathbf x(t_1,r_1))+r_1^2Q_{\mathbf a_2}(\beta(t_1,r_1))-r_1 \}+1.$$
\end{proof}

However, the essential problem for the dropouts $(a_1,\cdots,a_{16}) \in A'$ is for any its $10$-subtuple $\mathbf a_2$, there is 
$0 \le t_1 \le s+1$ and $-1\le r_1 \le2^{16}-1$ for which there is no the above $\beta(t_1,r_1) \in \Z^{10}$. 
There is a reason for it.
From now on, we deal with the problem and show the existence of a constant $C_{(a_1,\cdots,a_{16})}>0$ for each $(a_1,\cdots,a_{16}) \in A'$ for which the universality of $m$-gonal form $\Delta_{m,\mathbf a}(\mathbf x)$ having first $16$ coefficients as $(a_1,\cdots,a_{16})$ is characterized as the representability of positive integer up to $C_{(a_1,\cdots,a_{16})}(m-2)$ to complete the proof of Theorem \ref{main}. 

\vskip 0.8em

For $m\ge 2^{17}s-2s+2>2^{16}+3$, suppose that  an $m$-gonal form $\Delta_{m,\mathbf a}(\mathbf x)=\sum_{i=1}^na_iP_m(x_i)$ represents every positive integer up to $s(m-2)$.
For each representative $0 \le t(m-2)+r <s(m-2)$ modulo $s(m-2)$, we may take a vector $\mathbf x(t,r)=(x_1(t,r),\cdots, x_n(t,r)) \in \mathbb Z^n$ which represents $t(m-2)+r$, i.e., $\Delta_{m,\mathbf a}(\mathbf x (t,r))=t(m-2)+r$.
On the other hand, some 6 components among the first 16 components of $\Delta_{m,\mathbf a}(\mathbf x)$ are used to represent the multiples of $s(m-2)$.
So we desire that the remaining 10 components among the first 16 components handle the absent of the 6 components.
For each $0 \le t < s$ and $0 \le r <m-2$, write $$\sum \limits_{i=1}^{16}a_iP_m(x_i(t,r))=:t_1(m-2)+r_1 \ \text{ where }\left[-\frac{m}{2}\right] \le r_1 \le \left[\frac{m-2}{2}\right].$$
Note that $|x_i(t,r)|<s$ for all $1 \le i \le 16$ since $\sum \limits_{i=1}^{16}a_iP_m(x_i(t,r)) < s(m-2)$.
In fact, the absolute values are may much smaller than $s$ (possibly about $\sqrt{2s}$), but in here our point is that the possible $x_i(t,r)$'s are finitely many independently on $m$.
From $\left[-\frac{m}{2}\right] \le r_1 \le \left[\frac{m-2}{2}\right]$, we have $$r_1=\sum \limits_{i=1}^{16}a_i \cdot x_i(t,r)$$ since $|r_1|\le \sum_{i=1}^{16}a_i \cdot |x_i(t,r)| <s(a_1+\cdots+a_{16}) \le s(2^{16}-1)$.
Thus regardless of $m$, we have 
\begin{equation}\label{tr}\begin{cases} 0\le t_1 \le s+1 \\ |r_1|<s(2^{16}-1). \end{cases}\end{equation}
For the convenience of notation, for $\mathbf a=(a_1,\cdots,a_{16}) \in A'$, we define a set 
$$S_{\mathbf a,r_1}:=\{t'\in \mathbb N_0| t'(m-2)+r_1= \sum \limits_{i=1}^{16}a_iP_m(x_i) \le s(m-2) \text{ for some } x_i \in \mathbb Z\}$$
by the set of all the possible $t_1$'s for each $\left[-\frac{m}{2}\right] \le r_1 \le \left[\frac{m-2}{2}\right]$ and a quadratic polynomial $$Q_{\mathbf a,r_1,\beta^{(k)}}(\mathbf x):=Q_{\mathbf a}(\mathbf x)+r_1^2\cdot Q_{\mathbf a}(\beta^{(k)})^2-r_1.$$
By (\ref{tr}), we see that $$\begin{cases}S_{\mathbf a,r_1}\text{$\not= \emptyset$ only if $|r_1|\le s(2^{16}-1)$} \\ \text{$S_{\mathbf a,r_1} \subseteq \{n \in \mathbb N_0| 0 \le n \le s+1\}$}. \end{cases}$$ 
In order to show that for $\mathbf a \in A'$, there is a constant $C_{\mathbf a}>0$ for which the universality of any $m$-gonal form having its first $16$ coefficients as $\mathbf a $ is characterized by the representability of every positive integer up to $C_{\mathbf a}(m-2)$, it would be enough to show that there is a $10$-subtuple $\mathbf a_2$ of $\mathbf a$ for which the $10$-ary $m$-gonal form $\Delta_{m,\mathbf a_2}(\mathbf x)$ represents an integer which is equivalent with $t_1(m-2)+r_1$ modulo $s(m-2)$ in an interval $[0, (C_{\mathbf a}-s)(m-2)]$ for each $t_1 \in S_{\mathbf a,r_1}$ and $|r_1|<s(2^{16}-1)$.

\vskip 0.8em

Now our goal is to show that for each $\mathbf a \in A'$, there is a $10$-subtuple $\mathbf a_2$ of $\mathbf a$ for which the $10$-ary $m$-gonal form $\Delta_{m,\mathbf a_2}(\mathbf x)$ represents an integer which is equivalent with $t_1(m-2)+r_1$ modulo $s(m-2)$ for all $t_1 \in S_{\mathbf a,r_1}$ and $|r_1|<s(2^{16}-1)$ in an interval $[0, (C_{\mathbf a}-s)(m-2)]$ by showing that there is $\beta(t_1,r_1) \in \Z^{10}$ for each $t_1 \in S_{\mathbf a,r_1}$ and $|r_1|<s(2^{16}-1)$.
And then we may obtain that for $\Delta_{m,\mathbf a_2}(\mathbf y (t_1,r_1))\equiv t_1(m-2)+r_1 \pmod{s(m-2)}$ with $0\le \Delta_{m,\mathbf a_2}(\mathbf y (t_1,r_1))\le (C_{(a_1,\cdots,a_{16})}-s)(m-2)$, 
$$\begin{cases}
\Delta_{m,\mathbf a_2}(\mathbf y(t_1,r_1))+\sum \limits_{i=17}^na_iP_m(x_i(t,r))\equiv t(m-2)+r \pmod{s(m-2)} \\
\Delta_{m,\mathbf a_2}(\mathbf y(t_1,r_1))+\sum \limits_{i=17}^na_iP_m(x_i(t,r))\le C_{(a_1,\cdots,a_{16})}(m-2),\end{cases}$$ then we are done.

In Lemma \ref{3}, we suggest a sufficient local condition to exist such the above $\beta(t_1, r_1) \in \Z^{10}$ for all $t_1 \in S_{\mathbf a,r_1}$ and $|r_1|<s(2^{16}-1)$.

\vskip 0.8em

\begin{lem} \label{3}
For a prime $p_0$, if there are a $10$-subtuple $\mathbf a_2:=(a_{i_1},\cdots,a_{i_{10}})$ of $(a_1,\cdots,a_{16}) \in A'$ and $N$ vectors $\beta^{(k)} \in \mathbb Z^{10}$ with $B_{\mathbf a_2}(\alpha,\beta^{(k)})=1$ for all $1\le k \le N$ satisfying the following two conditions
\begin{equation} \label{t1}
L_{\mathbf a_2, \beta^{(k)}}\otimes \mathbb Z_p \text{ are even universal for all primes $p\neq p_0$  }   
\end{equation} 
for all $1\le k \le N$ and
\begin{equation} \label{t2}
\bigcup\limits_{k=1}^N\{Q_{\mathbf a_2,r_1,\beta^{(k)}}(\mathbf x) |\mathbf x \in L_{\mathbf a_2, \beta^{(k)}} \otimes \mathbb Z_{p_0} \} \supseteq 2S_{(a_1,\cdots,a_{16}), r_1} \otimes \mathbb Z_{p_0} ,
\end{equation}
for each $r_1 \in \mathbb Z_{p_0}$, then there is 
$$C_{(a_1,\cdots, a_{16})} \ge s>0$$ 
for which having its first $16$ coefficients as $(a_1,\cdots ,a_{16}) \in A'$ $m$-gonal form with $m\ge 2^{17}s-2s+2>2^{16}+3$ 
$$\Delta_{m,\mathbf a}(\mathbf x)= \sum\limits_{i=1}^n a_iP_m(x_i)$$ which represents every positive integer up to $s(m-2)$ represents every positive integer greater than $C_{(a_1,\cdots,a_{16})}(m-2)$.
\end{lem}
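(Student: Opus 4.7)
The plan is to adapt Lemma \ref{lu} and its generalization Lemma \ref{lu'}, replacing the assumption that a single $\beta$ gives a locally even universal $L_{\mathbf a_2, \beta}$ by the weaker collective hypothesis (\ref{t2}) across several $\beta^{(k)}$, tailored to match only those residues $2t_1$ that actually arise as head contributions of the first sixteen coefficients, rather than every even $2$-adic integer.

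\textbf{Setup and the finite residue data.} I would begin from the material preceding the lemma: given an $m$-gonal form with first sixteen coefficients $\mathbf a \in A'$ that represents every positive integer in $[1, s(m-2)]$, fix for each $0 \le t \le s-1$ and $0 \le r \le m-3$ a representation $\mathbf x(t,r) \in \mathbb Z^n$, split $\sum_{i=1}^{16} a_i P_m(x_i(t,r)) =: t_1(m-2) + r_1$ and $\sum_{i=17}^n a_i P_m(x_i(t,r)) =: t_2(m-2) + r_2$, and recall the bounds $t_1 \in S_{\mathbf a, r_1} \subseteq \{0, 1, \ldots, s+1\}$ and $|r_1| < s(2^{16}-1)$. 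Crucially, the pairs $(t_1, r_1)$ that can appear form a finite set independent of $m$.

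\textbf{Building the compensating vector.} For each such pair $(t_1, r_1)$ the goal is to produce $\mathbf y(t_1, r_1) \in \mathbb Z^{10}$ with
\[
\Delta_{m, \mathbf a_2}(\mathbf y(t_1, r_1)) \equiv t_1(m-2) + r_1 \pmod{s(m-2)}, \quad \Delta_{m, \mathbf a_2}(\mathbf y(t_1, r_1)) \le (C_{\mathbf a} - s)(m-2),
\]
for a constant $C_{\mathbf a}$ independent of $m$. Taking $\mathbf y(t_1, r_1) = \mathbf x(t_1, r_1) + r_1 \beta^{(k)}$ with $\mathbf x(t_1, r_1) \in L_{\mathbf a_2, \beta^{(k)}}$, the identity
\[
\Delta_{m, \mathbf a_2}(\mathbf x + r_1 \beta^{(k)}) = \frac{m-2}{2}\bigl\{Q_{\mathbf a_2}(\mathbf x) + r_1^2 Q_{\mathbf a_2}(\beta^{(k)}) - r_1\bigr\} + r_1
\]
reduces the congruence to finding $\mathbf x \in L_{\mathbf a_2, \beta^{(k)}}$ with a value of $Q_{\mathbf a_2, r_1, \beta^{(k)}}$ congruent to $2t_1$ modulo $2s$. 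Hypothesis (\ref{t2}) delivers, for the relevant $t_1 \in S_{\mathbf a, r_1}$, some index $k = k(t_1, r_1)$ and a $p_0$-adic vector hitting $2t_1$, while hypothesis (\ref{t1}) renders $L_{\mathbf a_2, \beta^{(k)}} \otimes \mathbb Z_p$ even universal at every other prime $p$; together these give local solvability of the target integer at every completion. Theorem \ref{hkk}, applicable since $\mathrm{rank}(L_{\mathbf a_2, \beta^{(k)}}) = 8 \ge 5$, then furnishes an integer $\mathbf x(t_1, r_1) \in L_{\mathbf a_2, \beta^{(k)}}$ whose $Q_{\mathbf a_2}$-value is bounded by a constant depending only on $(\mathbf a, s, \beta^{(1)}, \ldots, \beta^{(N)})$.

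\textbf{Assembly and main obstacle.} Define
\[
C_{\mathbf a} := \max_{(t_1, r_1)} \frac{Q_{\mathbf a_2}(\mathbf x(t_1, r_1)) + r_1^2 Q_{\mathbf a_2}(\beta^{(k(t_1, r_1))}) - r_1}{2} + s,
\]
where the maximum is over the finite set of arising pairs. Combining $\Delta_{m, \mathbf a_2}(\mathbf y(t_1, r_1)) + \sum_{i=17}^n a_i P_m(x_i(t,r)) \equiv t(m-2) + r \pmod{s(m-2)}$ with the bound $\le C_{\mathbf a}(m-2)$ shows that every residue class modulo $s(m-2)$ is represented by $\Delta_{m,\mathbf a}$ inside $[0, C_{\mathbf a}(m-2)]$, and hence every positive integer exceeding $C_{\mathbf a}(m-2)$ is represented. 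The delicate step is the uniform control of $\mathbf x(t_1, r_1)$: the Tartakowski-type threshold in Theorem \ref{hkk} depends only on the lattice $L_{\mathbf a_2, \beta^{(k)}}$, so once the finite data $(\mathbf a, s, \beta^{(1)}, \ldots, \beta^{(N)})$ are fixed, a smallest-norm representative in each prescribed residue class has norm bounded independently of $m$. The hypothesis $m \ge 2^{17}s - 2s + 2$ is used precisely to ensure $|x_i(t,r)| < s$ for $i \le 16$, which is what forces the finiteness of the set of possible $(t_1, r_1)$ on which the maximum defining $C_{\mathbf a}$ is taken.
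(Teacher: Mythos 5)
Your proposal is correct and follows essentially the same route as the paper's proof: the same head/tail splitting of a representation of $t(m-2)+r$, the same reduction via the identity $\Delta_{m,\mathbf a_2}(\mathbf x+r_1\beta^{(k)})=\frac{m-2}{2}\{Q_{\mathbf a_2}(\mathbf x)+r_1^2Q_{\mathbf a_2}(\beta^{(k)})-r_1\}+r_1$ to solving $Q_{\mathbf a_2,r_1,\beta^{(k)}}(\mathbf x)\equiv 2t_1\pmod{2s}$, the same combination of (\ref{t1}), (\ref{t2}) and Theorem \ref{hkk}, and the same definition of $C_{(a_1,\cdots,a_{16})}$ as a maximum over the finite set of admissible pairs $(t_1,r_1)$. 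The only cosmetic difference is that the paper closes by explicitly citing Corollary \ref{6} for the complementary senary subform covering the multiples of $s(m-2)$, which your ``hence'' step uses implicitly.
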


\begin{proof}
For each representative $0 \le t(m-2)+r <s(m-2)$, take a vector $\mathbf x(t,r)=(x_1(t,r),\cdots, x_n(t,r)) \in \mathbb Z^n$ such that 
$$\Delta_{m,\mathbf a}(\mathbf x(t,r))=t(m-2)+r.$$
We write $$\sum \limits_{i=1}^{16}a_iP_m(x_i(t,r))=:t_1(m-2)+r_1$$ where $\left[-\frac{m}{2} \right] \le r_1 \le \left[\frac{m-2}{2} \right]$.
Since $t_1 \in S_{(a_1,\cdots,a_{16}),r_1}$, we may take a vector $\beta(t_1,r_1) \in \{\beta^{(1)},\cdots, \beta^{(N)}\}$ for which 
$$2t_1 \in \{Q_{\mathbf a_2}(\mathbf x)+r_1^2\cdot Q_{\mathbf a_2}(\beta(t_1,r_1))^2-r_1 |\mathbf x \in L_{\mathbf a_2, \beta(t_1,r_1)} \otimes \mathbb Z_{p_0} \}.$$
On the other hand, since $L_{\mathbf a_2,\beta^{(k)}}\otimes \mathbb Z_p$ are even universal for all primes $p\not= p_0$, $L_{\mathbf a_2,\beta(t_1,r_1)}$ locally represents $2t_1-r_1^2\cdot Q_{\mathbf a_2}(\beta(t_1,r_1))^2+r_1$. 
So $L_{\mathbf a_2,\beta(t_1,r_1)}$ represents an integer which is equivalent with $2t_1-r_1^2\cdot Q_{\mathbf a_2}(\beta(t_1,r_1))^2+r_1$ modulo $2s$ by Theorem \ref{hkk}.
For a vector $\mathbf y'(t_1,r_1) \in L_{\mathbf a_2, \beta(t_1,r_1)}$ with 
$$Q_{\mathbf a_2}(\mathbf y' (t_1, r_1)) \equiv 2t_1-r_1^2\cdot Q_{\mathbf a_2}(\beta(t_1,r_1))^2+r_1 \pmod{2s},$$
we have that $$\Delta_{m,\mathbf a_2}(\mathbf y'(t_1,r_1)+r_1\beta(t_1,r_1))\equiv t_1(m-2)+r_1 \pmod{s(m-2)}$$ 
and so 
$$\Delta_{m,\mathbf a_2}(\mathbf y'(t_1,r_1)+r_1\beta(t_1,r_1))+\sum \limits_{i=17}^na_iP_m(x_i(t,r))\equiv t(m-2)+r \pmod{s(m-2)}.$$
Thus we may conclude that the $(n-6)$-ary subform 
$$\sum_{k=1}^{10}a_{i_k}P_m(x_{i_k})+\sum_{i=17}^na_iP_m(x_i)$$
of $\Delta_{m,\mathbf a}(\mathbf x)$ represents complete residues modulo $s(m-2)$ in $[0,C_{(a_1,\cdots,a_{16})}(m-2)]$ where 
$$C_{(a_1,\cdots,a_{16})}:= \frac{1}{2}\max \{Q_{\mathbf a_2,r_1,\beta^{(k)}}(\mathbf y'(t_1,r_1)) | |r_1|\le s(2^{16}-1) \text{ and } 2t_1 \in S_{(a_1,\cdots,a_{16}),r_1}\}+s.$$
By Corollary \ref{6}, this completes the proof.
\end{proof}

\vskip 1em

$\bold{\ VII. \ A'(7)}$\\
Consider $\mathbf a=(a_1,\cdots,a_{16})=(1,2,4,7a_4',\cdots, 7a_{16}') \in A'(7)$. \\
From definition of $A'(7)$, we have that $$a_4'=1, \quad a_5' \in \{1,2\},\quad a_6' \in \{1,2,3,4\}, \quad a_7' \in \{1,2,\cdots, 8\}.$$
Take $$\mathbf a_2:=(a_1,\cdots,a_{10})$$ and 
$$\beta^{(1)}:=\mathbf e_1, \quad \beta^{(2)}:=-\mathbf e_1+\mathbf e_2, \quad  \beta^{(3)}:=-3\mathbf e_1+\mathbf e_3$$
 with $B_{\mathbf a_2}(\alpha,\beta^{(k)})=1$ for all $1\le k \le 3$.
By Proposition \ref{uni}, for any odd prime $p\neq 7=:p_0$, $L_{\mathbf a_2,\beta^{(k)}}\otimes \Z_p$ are (even) universal for all $1\le k \le 3$.
For $p=2$, one may show that $L_{\mathbf a_2,\beta^{(k)}}\otimes \Z_2$ are even universal for all $k=1,2,3$ through similar processings with $\bold{\ I. \ A(13)}$ or $\bold{\ III. \ A(7)}$.
So, (\ref{t1}) in Lemma \ref{3} holds for $\mathbf a_2=(a_1,\cdots,a_{10})$, $\beta^{(1)}=\mathbf e_1,  \beta^{(2)}=-\mathbf e_1+\mathbf e_2,   \beta^{(3)}=-3\mathbf e_1+\mathbf e_3$ and $p_0=7$.

Now consider $S_{\mathbf a,r_1} \otimes \Z_7$ for $|r_1|<s(2^{16}-1)$.\\
When $r_1 \equiv 0 \pmod{7}$, $S_{\mathbf a,r_1}\otimes \Z_7 \subseteq \Z_7 \backslash (\Z_7^{\times})^2\subsetneq \Z_7$.
For $t_1 \in S_{\mathbf a,r_1}$ with $$x_1(t_1,r_1)+2x_2(t_1,r_1)+4x_3(t_1,r_1)\equiv \sum_{i=1}^{16}a_iP_m(x_i)=r_1 \equiv 0 \pmod{7},$$
namely, $x_1(t_1,r_1)\equiv -2x_2(t_1,r_1)-4x_3(t_1,r_1) \pmod{7}$, 
\begin{align*}2t_1 & =\sum_{i=1}^{16}a_i(x_i(t_1,r_1)^2-x_i(t_1,r_1))\\
&\equiv x_1(t_1,r_1)^2+2x_2(t_1,r_1)^2+4x_3(t_1,r_1)^2 \\ & \equiv (-2x_2(t_1,r_1)-4x_3(t_1,r_1))^2+2x_2(t_1,r_1)^2+4x_3(t_1,r_1)^2\\   & \equiv-x(t_1,r_1)^2+2x_2(t_1,r_1)x_3(t_1,r_1)-x_3(t_1,r_1)^2 \\ &=-(x_2(t_1,r_1)-x_3(t_1,r_1))^2 \pmod{7}\end{align*}
holds.
Which yields $t_1 \notin (\Z_7^{\times})^2$ since $-2$ is a quadratic non-residue modulo $7$.
On the other hand, we may use Lemma \ref{nd} to obtain that
\begin{align*}L_{\mathbf a_2,\beta^{(k)}}\otimes \Z_7=(\Z_7\alpha+\Z_7\beta^{(k)})^{\perp} &\cong (\left<1\right>\perp\left<-1\right>)^{\perp}\\ & \cong \left<-1\right>\perp\left<a_4\right>\perp \cdots \perp \left<a_{10}\right>\\\end{align*} for all $k=1,2,3$.
Since $-1$ is a quadratic non-residue modulo $7$ and $ a_4,a_5,a_6$ are prime elements of $\z_7$, we may obtain that the represented integers by $L_{\mathbf a_2,\beta^{(k)}}\otimes \Z_7$ are 
\begin{equation}\label{124}Q_{\mathbf a_2}(L_{\mathbf a_2,\beta^{(k)}}\otimes \Z_7)=\Z_7 \backslash (\Z_7^{\times})^2\end{equation}
for all $1 \le k \le 3$.
From (\ref{124}), we obtain 
\begin{equation}
\text{for }r_1 \in \Z_7^{\times},\ \bigcup\limits_{k=1}^3\{Q_{\mathbf a_2,r_1,\beta^{(k)}}(\mathbf x) |\mathbf x \in L_{\mathbf a_2, \beta^{(k)}} \otimes \mathbb Z_{7} \} =\Z_7\supseteq 2S_{\mathbf a,r_1}\otimes \Z_7 
\end{equation}
and
\begin{equation}
\text{for } r_1 \in 7\Z_7, \ \bigcup\limits_{k=1}^3\{Q_{\mathbf a_2,r_1,\beta^{(k)}}(\mathbf x) |\mathbf x \in L_{\mathbf a_2, \beta^{(k)}} \otimes \mathbb Z_{7} \} =\Z_7 \backslash (\Z_7^{\times})^2 \supseteq 2S_{\mathbf a,r_1}\otimes \Z_7,
\end{equation}
which imply that (\ref{t2}) in Lemma \ref{3} holds for $\mathbf a_2=(a_1,\cdots,a_{10})$, $\beta^{(1)}=\mathbf e_1,  \beta^{(2)}=-\mathbf e_1+\mathbf e_2,   \beta^{(3)}=-3\mathbf e_1+\mathbf e_3$ and $p_0=7$.

Consequently, we conclude that when $\mathbf a \in A'(7)$,
for $\mathbf a_2=(a_1,\cdots,a_{10})$, $\beta^{(1)}=\mathbf e_1,  \beta^{(2)}=-\mathbf e_1+\mathbf e_2,   \beta^{(3)}=-3\mathbf e_1+\mathbf e_3$ and $p_0=7$, both of (\ref{t1}) and (\ref{t2}) in Lemma \ref{3} are satisfied.

\vskip 1em

$\bold{\ VIII. \ A'(5)}$ 

First consider $\mathbf a=(a_1,\cdots,a_{16})=(1,1,3,5a_4',\cdots,5a_{16}' ) \in A'(5)$.\\
From definition of $A'(5)$, we have that $$a_4'=1, \quad a_5' \in \{1,2\},\quad a_6' \in \{1,2,3,4\}, \quad a_7' \in \{1,2,\cdots, 8\}.$$
Take $$\mathbf a_2:=(a_1,\cdots,a_{10})$$ and 
$$\beta^{(1)}:=-4\mathbf e_2+\mathbf e_4,\  \beta^{(2(1))}:=-5\mathbf e_1-7\mathbf e_2+\mathbf e_3+\mathbf e_4+\mathbf e_5,\beta^{(2(2))}:=-7\mathbf e_1-5\mathbf e_2+\mathbf e_3+\mathbf e_5 $$
 with $B_{\mathbf a_2}(\alpha,\beta^{(k)})=1$ for all $k=1,2(1),2(2)$.
By Proposition \ref{uni}, for any odd prime $p\neq 5=:p_0$, $L_{\mathbf a_2,\beta^{(k)}}\otimes \Z_p$ is (even) universal for each $k \in \{1,2(1),2(2)\}$.
For $p=2$, one may show that $L_{\mathbf a_2,\beta^{(k)}}\otimes \Z_2$ with $a_5=5$ (resp, $a_5=10$) is even universal for each $k=1,2(1)$ (resp, $k=1,2(2)$) through a similar processing with $\bold{\ I. \ A(13)}$ or $\bold{\ III. \ A(7)}$.
So, the above arguments induce that when $\mathbf a=(a_1,\cdots,a_{16})=(1,1,3,5a_4',\cdots,5a_{16}' ) \in A'(5)$ with $a_5=5$ (resp, $a_5=10$), (\ref{t1}) in Lemma \ref{3} holds for $\mathbf a_2=(a_1,\cdots,a_{10})$, $\beta^{(1)}=-4\mathbf e_2+\mathbf e_4,\  \beta^{(2)}=-5\mathbf e_1-7\mathbf e_2+\mathbf e_3+\mathbf e_4+\mathbf e_5$ (resp, $\beta^{(1)}=-4\mathbf e_2+\mathbf e_4,\  \beta^{(2)}=-7\mathbf e_1-5\mathbf e_2+\mathbf e_3+\mathbf e_5$), and $p_0=5$.

Now consider $S_{\mathbf a,r_1} \otimes \Z_5$ for $|r_1|<s(2^{16}-1)$.\\
When $r_1 \equiv 0 \pmod{5}$, we may show that $S_{\mathbf a,r_1}\otimes \Z_5 \subseteq \Z_5 \backslash 2(\Z_5^{\times})^2\subsetneq \Z_5$ through a similar processing with $\bold{\ VII. \ A'(7)}$.
On the other hand, for $\mathbf a_2=(a_1,\cdots,a_{10})$, we may use Lemma \ref{nd} to see that
\begin{align*}L_{\mathbf a_2,\beta^{(k)}}\otimes \Z_5=(\Z_5\alpha+\Z_5\beta^{(k)})^{\perp} &\cong (\left<1\right>\perp\left<-1\right>)^{\perp}\\ & \cong \left<3\right>\perp\left<a_4\right>\perp\left<a_5\right>\perp \cdots \perp \left<a_{10}\right>\\ \end{align*} 
for all $k=1,2(1), 2(2)$.
Since $3$ is a quadratic non-residue modulo $5$ and $a_4,a_5,a_6$ are prime elements of $\z_5$, we obtain that the represented integers by $L_{\mathbf a_2,\beta^{(k)}}\otimes \Z_5$ are 
\begin{equation}\label{124'}
Q_{\mathbf a_2}(L_{\mathbf a_2,\beta^{(k)}}\otimes \Z_5)=\Z_5 \backslash (\Z_5^{\times})^2\end{equation}
for all $k=1,2(1), 2(2)$.
From (\ref{124'}), we obtain 
\begin{equation}
\text{for }r_1 \in \Z_5^{\times},\ \bigcup\limits_{k\in\{1,2(i)\}}\{Q_{\mathbf a_2,r_1,\beta^{(k)}}(\mathbf x) |\mathbf x \in L_{\mathbf a_2, \beta^{(k)}} \otimes \mathbb Z_{5} \} =\Z_5\supseteq 2S_{\mathbf a,r_1}\otimes \Z_5 
\end{equation}
for each $i=1,2$ and
\begin{equation}
\text{for } r_1 \in 5\Z_5, \ \bigcup\limits_{k\in\{1,2(i)\}}\{Q_{\mathbf a_2,r_1,\beta^{(k)}}(\mathbf x) |\mathbf x \in L_{\mathbf a_2, \beta^{(k)}} \otimes \mathbb Z_{5} \} =\Z_5 \backslash (\Z_5^{\times})^2 \supseteq 2S_{\mathbf a,r_1}\otimes \Z_5,
\end{equation}
for each $i=1,2$,
which imply that when $\mathbf a=(1,1,3,5a_4',\cdots,5a_{16}' ) \in A'(5)$ with $a_5=5$ (resp, $a_5=10$), (\ref{t2}) in Lemma \ref{3} holds for $\mathbf a_2=(a_1,\cdots,a_{10})$, $\beta^{(1)}=-4\mathbf e_2+\mathbf e_4,\  \beta^{(2)}=-5\mathbf e_1-7\mathbf e_2+\mathbf e_3+\mathbf e_4+\mathbf e_5$ (resp, $\beta^{(1)}=-4\mathbf e_2+\mathbf e_4,\  \beta^{(2)}=-7\mathbf e_1-5\mathbf e_2+\mathbf e_3+\mathbf e_5$), and $p_0=5$.

Consequently, we conclude that when $\mathbf a=(1,1,3,5a_4',\cdots,5a_{16}' ) \in A'(5)$ with $a_5=5$ (resp, $a_5=10$), for $\mathbf a_2=(a_1,\cdots,a_{10})$, $\beta^{(1)}=-4\mathbf e_2+\mathbf e_4,\  \beta^{(2)}=-5\mathbf e_1-7\mathbf e_2+\mathbf e_3+\mathbf e_4+\mathbf e_5$ (resp, $\beta^{(1)}=-4\mathbf e_2+\mathbf e_4,\  \beta^{(2)}=-7\mathbf e_1-5\mathbf e_2+\mathbf e_3+\mathbf e_5$), and $p_0=5$, both of (\ref{t1}), and (\ref{t2}) in Lemma \ref{3} are satisfied.

\vskip 0.5em

Secondly, consider $\mathbf a=(a_1,\cdots,a_{16})=(1,2,2,5a_4',\cdots, 5a_{16}') \in A'(5)$.\\
Through a similar processing with the above, one may show that both of (\ref{t1}) and (\ref{t2}) in Lemma \ref{3} are satisfied for $\mathbf a_2=(a_1,\cdots,a_{10})$, $\beta^{(1)}=-\mathbf e_1+\mathbf e_2, \ \beta^{(2)}=-\mathbf e_2 -\mathbf e_3+\mathbf e_4$, and $p_0=5$,.

\vskip 0.5em

Through a similar processing with the above argument, one may show that both of (\ref{t1}) and (\ref{t2}) in Lemma \ref{3} are satisfied \\
\begin{itemize}
\item[(1)]
when $(1,1,3,3,6,15,25,3a_8',\cdots,3a_{16}') \in A'(5)$, for \\
$\mathbf a_2=(a_1,\cdots,a_{10})$, $\beta^{(1)}=\mathbf e_1, \ \beta^{(2)}=-28\mathbf e_1+\mathbf e_2+\mathbf e_3+\mathbf e_7$, $p_0=3$,
\item[(2)]
when $(1,1,3,3,10,15,15a_7',3a_8',\cdots,3a_{16}') \in A'(5)$, for \\
$\mathbf a_2=(a_1,\cdots,a_{10})$, $\beta^{(1)}=\mathbf e_1, \ \beta^{(2)}=-28\mathbf e_1+\mathbf e_2+\mathbf e_3+\mathbf e_5+\mathbf e_6$, $p_0=3$,

\item[(3)]
when $(1,1,3,6,10,15,15a_7',3a_8',\cdots,3a_{16}') \in A'(5)$, for \\
$\mathbf a_2=(a_1,\cdots,a_{10})$, $\beta^{(1)}=-3\mathbf e_3+\mathbf e_5, \ \beta^{(2)}=-\mathbf e_1 -\mathbf e_2+6\mathbf e_3-\mathbf e_6$, $p_0=3$,
\item[(4)]
when $(1,1,3,6,6,15,25,3a_8',\cdots,3a_{16}') \in A'(5)$, for \\
$\mathbf a_2=(a_1,\cdots,a_{10})$, $\beta^{(1)}=\mathbf e_1, \ \beta^{(2)}=-34\mathbf e_1+\mathbf e_2+\mathbf e_3+\mathbf e_4+\mathbf e_6$, $p_0=3$,
\item[(5)]
when $(1,1,3,6,12,15,25,3a_8',\cdots,3a_{16}') \in A'(5)$, for \\
$\mathbf a_2=(a_1,\cdots,a_{10})$, $\beta^{(1)}=\mathbf e_1, \ \beta^{(2)}=-34\mathbf e_1+\mathbf e_2+\mathbf e_3+\mathbf e_4+\mathbf e_6$, $p_0=3$.
\end{itemize}
\vskip 1em

$\bold{\ IX. \ A'(3)}$ 

First consider $\mathbf a=(a_1,\cdots , a_{16})=(1,1,3a_3',\cdots, 3a_{16}') \in A'(3)$.\\
We have that
\begin{equation}\begin{cases}S_{\mathbf a,r_1} \subseteq \Z_3 \setminus 2(\Z_3^{\times})^2 & \text{when } r_1 \equiv 0 \pmod{3}\\
S_{\mathbf a,r_1} \subseteq \Z_3 \setminus (\Z_3^{\times})^2 & \text{when } r_1 \equiv 1 \pmod{3} \\
S_{\mathbf a,r_1} \subseteq \Z_3 \setminus 2(\Z_3^{\times})^2 & \text{when } r_1 \equiv 2 \pmod{3} \end{cases}\end{equation}
since $$2t_1\equiv x_1(t_1,r_1)^2+x_2(t_1,r_1)^2-r_1 \pmod{3}$$
and
$$\begin{cases}(x_1(t_1,r_1), x_2(t_1,r_1))\equiv (1,2),(2,1), \text{ or }(0,0) \pmod{3} & \text{when } r_1 \equiv 0 \pmod{3}\\
(x_1(t_1,r_1), x_2(t_1,r_1))\equiv (1,0),(0,1), \text{ or }(2,2) \pmod{3} & \text{when } r_1 \equiv 1 \pmod{3}\\
(x_1(t_1,r_1), x_2(t_1,r_1))\equiv (2,0),(0,2), \text{ or }(1,1) \pmod{3} & \text{when } r_1 \equiv 2 \pmod{3}\\
\end{cases}$$
for $t_1 \in S_{\mathbf a,r_1}$ and $r_1=\sum_{i=1}^{16}a_ix_i(t_1,r_1)\equiv x_1(t_1,r_1)+x_2(t_1,r_1) \pmod{3}.$
On the other hand, we have that
$$\begin{cases} L_{\mathbf a_2,\beta}\otimes \Z_p \text{ is even universal} & \text{for } p\neq 3  \\ 
Q_{\mathbf a_2}(L_{\mathbf a_2,\beta}\otimes \Z_3)=\Z_3 \setminus (\Z_3^{\times})^2 . & 
\end{cases}$$
for $\mathbf a_2=(a_1,\cdots,a_{10})$ and
\begin{equation} \label{beta3'}
\begin{cases}
\beta=-\mathbf e_1-\mathbf e_2-\mathbf e_3-\mathbf e_4-\mathbf e_5+4\mathbf e_6 & \text{when } (a_3,a_4,a_5,a_6)=(3,3,3,3)\\
\beta=-\mathbf e_1-\mathbf e_2-\mathbf e_3-\mathbf e_4-\mathbf e_5+2\mathbf e_6 & \text{when } (a_3,a_4,a_5,a_6)=(3,3,3,6)\\
\beta=-4\mathbf e_1-4\mathbf e_2-4\mathbf e_3-4\mathbf e_4-4\mathbf e_5+5\mathbf e_6 & \text{when } (a_3,a_4,a_5,a_6)=(3,3,3,9)\\
\beta=-\mathbf e_1-\mathbf e_2-\mathbf e_3-\mathbf e_4-\mathbf e_5+\mathbf e_6 & \text{when } (a_3,a_4,a_5,a_6)=(3,3,3,12)\\
\beta=-\mathbf e_1-\mathbf e_2-\mathbf e_3+4\mathbf e_4-\mathbf e_5 & \text{when } (a_3,a_4,a_5)=(3,3,6)\\
\beta=-\mathbf e_1-\mathbf e_2-\mathbf e_3-\mathbf e_4-\mathbf e_5+2\mathbf e_6 & \text{when } (a_3,a_4,a_5,a_6)=(3,3,9,9)\\
\beta=-4\mathbf e_1-4\mathbf e_2-4\mathbf e_3-4\mathbf e_4+9\mathbf e_5-4\mathbf e_6 & \text{when } (a_3,a_4,a_5,a_6)=(3,3,9,12)\\
\beta=2\mathbf e_1+2\mathbf e_2+2\mathbf e_3+2\mathbf e_4-5\mathbf e_5+2\mathbf e_6 & \text{when } (a_3,a_4,a_5,a_6)=(3,3,9,15)\\
\beta=-\mathbf e_1-\mathbf e_2-\mathbf e_3-\mathbf e_4-\mathbf e_5+\mathbf e_6 & \text{when } (a_3,a_4,a_5,a_6)=(3,3,9,18)\\
\beta=-\mathbf e_1-\mathbf e_2-\mathbf e_3-\mathbf e_4+2\mathbf e_5 & \text{when } (a_3,a_4,a_5)=(3,6,6)\\
\beta=-4\mathbf e_1-4\mathbf e_2-4\mathbf e_3-4\mathbf e_4+5\mathbf e_5 & \text{when } (a_3,a_4,a_5)=(3,6,9)\\
\beta=-\mathbf e_1-\mathbf e_2-\mathbf e_3-\mathbf e_4+\mathbf e_5 & \text{when } (a_3,a_4,a_5)=(3,6,12).\\
\end{cases}
\end{equation}

Consequently, we conclude that when $\mathbf a =(1,1,3a_3',\cdots, 3a_{16}') \in A'(3)$, for $\mathbf a_2=(a_1,\cdots,a_{10})$, $\beta \in \Z^{10}$ in (\ref{beta3'}), and $p_0=3$, the local conditions (\ref{t1}) and (\ref{t2}) in Lemma \ref{lu'} are satisfied.

\vskip 0.5em

Secondly, consider $\mathbf a=(1,2,3a_3',\cdots, 3a_{16}') \in A'(3)$.\\
One may show that
\begin{equation}S_{\mathbf a,r_1} \subseteq 3\Z_3  \text{ when } r_1 \equiv 0 \pmod{3}\end{equation}
similarly with the above since $$2t_1\equiv x_1(t_1,r_1)^2+2x_2(t_1,r_1)^2-r_1 \pmod{3}$$
where $t_1 \in S_{\mathbf a,r_1}$ and $r_1=\sum_{i=1}^{16}a_ix_i(t_1,r_1)\equiv x_1(t_1,r_1)+2x_2(t_1,r_1) \pmod{3}.$
On the other hand, one may show that 
$$\begin{cases} 
L_{\mathbf a,\beta^{(k)}}\otimes \Z_p \text{ are even universal} & \text{for }p \neq 3\\
L_{\mathbf a,\beta^{(k)}}\otimes \Z_3 \text{ represents every $3$-adic integer in } 3\Z_3 & \end{cases}$$
for $\mathbf a_2=(a_1,\cdots,a_{10})$ and the following $\beta^{(k)}$'s 
\begin{equation}\label{beta3''}\begin{cases}
\beta^{(1)}=\mathbf e_1, \beta^{(2)}=-\mathbf e_2+\mathbf e_3+2\mathbf e_4, \beta^{(3)}=-\mathbf e_2+\mathbf e_3 & \text{when }(a_4,a_5,a_6) = (3,6,6)\\
\beta^{(1)}=\mathbf e_1, \beta^{(2)}=-3(\mathbf e_1+\mathbf e_3+\mathbf e_6)+38\mathbf e_2, \beta^{(3)}=-\mathbf e_2+\mathbf e_3 & \text{when }(a_4,a_5,a_6) =(6,12,21)\\
\beta^{(1)}=\mathbf e_1, \beta^{(2)}=-\mathbf e_2+\mathbf e_3, \beta^{(3)}=-\mathbf e_2+\mathbf e_3 & \text{otherwise }\\
\end{cases}\end{equation}
yielding
\begin{equation}
\text{for }r_1 \in \Z_3^{\times},\ \bigcup\limits_{k=1}^3\{Q_{\mathbf a_2,r_1,\beta^{(k)}}(\mathbf x) |\mathbf x \in L_{\mathbf a_2, \beta^{(k)}} \otimes \mathbb Z_{3} \} =\Z_3\supseteq 2S_{\mathbf a,r_1}\otimes \Z_3 
\end{equation}
and
\begin{equation}
\text{for } r_1 \in 3\Z_3, \ \bigcup\limits_{k=1}^3\{Q_{\mathbf a_2,r_1,\beta^{(k)}}(\mathbf x) |\mathbf x \in L_{\mathbf a_2, \beta^{(k)}} \otimes \mathbb Z_{3} \}=3\Z_3 \supseteq 2S_{\mathbf a,r_1}\otimes \Z_3.
\end{equation}

Consequently, we conclude that when $\mathbf a =(1,2,3a_3',\cdots, 3a_{16}') \in A'(3)$, for $\mathbf a_2=(a_1,\cdots,a_{10})$, $\beta^{(k)} \in \Z^{10}$ in (\ref{beta3''}), and $p_0=3$, the local conditions (\ref{t1}) and (\ref{t2}) in Lemma \ref{lu'} are satisfied.

\vskip 0.5em

Thirdly, consider $(1,1,a_3,\cdots, a_{16}) \in A'(3)$ with $\{a_3,\cdots, a_{16}\pmod3\}=\{1,0,\cdots,0\}$.\\
Note that $$(a_3,a_4,a_5) \in \{(1,3,a_5), (3,3,a_5), (3,4,6), (3,4,9), (3,6,a_5)\}.$$
If $(a_3,a_4,a_5) \neq (3,4,9)$, then
\begin{equation}S_{\mathbf a,r_1} \subseteq \Z_3 \setminus 2(\Z_3^{\times})^2  \text{ for } r_1 \equiv 0 \pmod{3}.\end{equation}
Through a similar processing with the above, one may yield that for $\mathbf a_2=(a_1,\cdots,a_{10})$,  
\begin{equation}
\begin{cases}
\beta^{(1)}=\mathbf e_1, \beta^{(2)}=6\mathbf e_1-\mathbf e_2-\mathbf e_3-\mathbf e_4 & \text{when }(a_3,a_4)=(1,3)\\
\beta^{(1)}=\mathbf e_1, \beta^{(2)}=-\mathbf e_1-\mathbf e_2-\mathbf e_3+2\mathbf e_4 & \text{when }(a_3,a_4)=(3,3)\\
\beta^{(1)}=\mathbf e_1, \beta^{(2)}=\mathbf e_1+\mathbf e_2+\mathbf e_3-\mathbf e_4 & \text{when }(a_3,a_4,a_5)=(3,4,6)\\
\beta^{(1)}=\mathbf e_1, \beta^{(2)}=-\mathbf e_1-\mathbf e_2-\mathbf e_3+\mathbf e_4 & \text{when }(a_3,a_4)=(3,6),\\
\end{cases}
\end{equation}
and $p_0=3$, the local conditions (\ref{t1}) and (\ref{t2}) in Lemma \ref{lu'} are satisfied by showing that
\begin{equation}
L_{\mathbf a_2,\beta^{(k)}}\otimes \Z_p \text{ are even universal for every prime }p \neq 3,
\end{equation}
\begin{equation}
\text{for $r_1 \in \Z_3^{\times}$}, \ \bigcup\limits_{k=1}^2\{Q_{\mathbf a_2,r_1,\beta^{(k)}}(\mathbf x) |\mathbf x \in L_{\mathbf a, \beta^{(k)}} \otimes \mathbb Z_{3} \} =\Z_3\supseteq 2S_{\mathbf a_2,r_1}\otimes \Z_3 
\end{equation}
and
\begin{equation}
\text{for $r_1 \in 3\Z_3$}, \ \bigcup\limits_{k=1}^2\{Q_{\mathbf a_2,r_1,\beta^{(k)}}(\mathbf x) |\mathbf x \in L_{\mathbf a_2, \beta^{(k)}} \otimes \mathbb Z_{3} \}=\Z_3 \backslash (\Z_3^{\times})^2 \supseteq 2S_{\mathbf a,r_1}\otimes \Z_3.
\end{equation}
If $(a_3,a_4,a_5)=(3,4,9)$, then
\begin{equation}\label{349}
\begin{cases}
S_{\mathbf a,r_1} \subseteq 2(\Z_3^{\times})^2 \cup 3(\Z_3^{\times})^2 \cup 9\Z_3 & \text{ when } r_1 \equiv 0 \pmod{9} \\
S_{\mathbf a,r_1} \subseteq 2(\Z_3^{\times})^2 \cup 6(\Z_3^{\times})^2 \cup 9\Z_3 & \text{ when } r_1 \equiv 3 \pmod{9} \\
S_{\mathbf a,r_1} \subseteq 2(\Z_3^{\times})^2 \cup 3(\Z_3^{\times}) & \text{ when } r_1 \equiv 6 \pmod{9}. \\
\end{cases}
\end{equation}
So we may see that for $\mathbf a_2=(a_1,\cdots,a_{10})$, $\beta^{(1)}=\mathbf e_1,\ \beta^{(2)}=\mathbf e_1+\mathbf e_2+\mathbf e_3-\mathbf e_4, \ \beta^{(3)}=2\mathbf e_1+2\mathbf e_2+2\mathbf e_3-\mathbf e_5$, and $p_0=3$, the local conditions (\ref{t1}) and (\ref{t2}) in Lemma \ref{lu'} are satisfied by showing that
\begin{equation}
L_{\mathbf a_2,\beta^{(k)}}\otimes \Z_p \text{ are even universal for every prime }p \neq 3,
\end{equation}
\begin{equation}
\text{for $r_1 \in \Z_3^{\times}$}, \ \bigcup\limits_{k=1}^3\{Q_{\mathbf a_2,r_1,\beta^{(k)}}(\mathbf x) |\mathbf x \in L_{\mathbf a_2, \beta^{(k)}} \otimes \mathbb Z_{3} \} =\Z_3\supseteq 2S_{\mathbf a,r_1}\otimes \Z_3 
\end{equation}
and
\begin{equation}
\text{for $r_1 \in 3\Z_3$}, \ \bigcup\limits_{k=1}^3\{Q_{\mathbf a_2,r_1,\beta^{(k)}}(\mathbf x) |\mathbf x \in L_{\mathbf a_2, \beta^{(k)}} \otimes \mathbb Z_{3} \} \supseteq 2S_{\mathbf a,r_1}\otimes \Z_3.
\end{equation}

\vskip 0.5em

We postpone to consider the final case $(1,2,3,6,8,8a_6',24,8a_8',\cdots, 8a_{16}')\in A'(3)$ to next section.


\section{$A'(2)$ : Dropouts in $A(2)$}

We finally consider the dropouts in $A'(2)$ and of the form of $(1,2,3,6,8,8a_6',24,$ $8a_8',\cdots, 8a_{16}')\in A'(3)$ to complete Theorem \ref{main}.

\begin{cor} \label{2uni}
Suppose $p$ be an odd prime.

\begin{itemize}
\item[(1)]
For  $\mathbf a_2=(a_{i_1}, \cdots, a_{i_{10}})$, if there are at least 6 units of $\Z_p$ (by admitting a recursion) in $\{a_{i_1}, \cdots, a_{i_{10}}\}$, then $L_{\mathbf a_2, \beta} \otimes \mathbb Z_p$ is (even) universal for $\beta \in \Z^{10}$ with $B_{\mathbf a_2}(\alpha, \beta)=2^t$.


\item[(2)]
For  $\mathbf a_2=(a_{i_1}, \cdots, a_{i_{10}})$, if there are at least 5 units and at least 2 prime elements of $\Z_p$ (by admitting a recursion) in $\{a_{i_1}, \cdots, a_{i_{10}}\}$, then $L_{\mathbf a_2, \beta} \otimes \mathbb Z_p$ is (even) universal for $\beta \in \Z^{10}$ with $B_{\mathbf a_2}(\alpha, \beta)=2^t$.

\end{itemize}
\end{cor}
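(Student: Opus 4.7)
The plan is to adapt the proof of Proposition \ref{uni} verbatim, modifying only the spots where the value $B_{\mathbf a_2}(\alpha,\beta)=1$ was used. The key observation is that for an odd prime $p$, the element $2^t$ is a unit of $\mathbb Z_p$, so all the divisibility/unimodularity arguments in the original proof go through with $1$ replaced by $2^t$.

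First I would write out the discriminant of $\mathbb Z_p\alpha+\mathbb Z_p\beta$, namely
$$d(\mathbb Z_p\alpha+\mathbb Z_p\beta)=Q_{\mathbf a_2}(\alpha)Q_{\mathbf a_2}(\beta)-B_{\mathbf a_2}(\alpha,\beta)^2=Q_{\mathbf a_2}(\alpha)Q_{\mathbf a_2}(\beta)-4^t.$$
If this discriminant is a unit of $\mathbb Z_p$, then $\mathbb Z_p\alpha+\mathbb Z_p\beta$ is unimodular and the identical citations (82:15, 91:9 of \cite{O} together with Lemma \ref{p}) show that $L_{\mathbf a_2,\beta}\otimes\mathbb Z_p=(\mathbb Z_p\alpha+\mathbb Z_p\beta)^{\perp}$ contains a quaternary (in case (1)) or ternary (in case (2)) unimodular $\mathbb Z_p$-sublattice, and 92:1b of \cite{O} finishes the case.

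If instead $d(\mathbb Z_p\alpha+\mathbb Z_p\beta)\notin\mathbb Z_p^{\times}$, then $Q_{\mathbf a_2}(\alpha)Q_{\mathbf a_2}(\beta)\equiv 4^t\pmod p$, which forces $Q_{\mathbf a_2}(\alpha)\in\mathbb Z_p^{\times}$ because $4^t$ is a unit. I would then diagonalize the rank-$2$ sublattice as
$$\mathbb Z_p\alpha+\mathbb Z_p\beta=\mathbb Z_p\alpha\perp\mathbb Z_p\bigl(\beta-2^tQ_{\mathbf a_2}(\alpha)^{-1}\alpha\bigr),$$
using $B_{\mathbf a_2}(\alpha,\beta-2^tQ_{\mathbf a_2}(\alpha)^{-1}\alpha)=2^t-2^t=0$, which is the sole substantive change from the proof of Proposition \ref{uni} (there the shift was $-Q_{\mathbf a_2}(\alpha)^{-1}\alpha$ instead of $-2^tQ_{\mathbf a_2}(\alpha)^{-1}\alpha$). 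After this diagonalization, the rest of the argument is word-for-word identical to that of Proposition \ref{uni}: choose an orthogonal $\mathbb Z_p$-basis $\mathbf v_1=\alpha,\mathbf v_2,\ldots,\mathbf v_{10}$ of $(\mathbb Z_p^{10},Q_{\mathbf a_2})$ with $Q_{\mathbf a_2}(\mathbf v_i)\in\mathbb Z_p^{\times}$ for the appropriate indices (five or six of them in case (1), four units and two $p$-prime entries in case (2)), expand $\beta-2^tQ_{\mathbf a_2}(\alpha)^{-1}\alpha$ in this basis, and split into the subcases based on the orders $t_i$ of its coefficients to extract a quaternary unimodular or ternary unimodular $\mathbb Z_p$-sublattice of $L_{\mathbf a_2,\beta}\otimes\mathbb Z_p$, concluding even universality by 92:1b of \cite{O}.

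The only potentially delicate point, and thus the expected main obstacle, is verifying that no intermediate step of the original proof secretly used $B_{\mathbf a_2}(\alpha,\beta)=1$ beyond its appearance in the discriminant formula and in the projection coefficient. A careful pass through the proof of Proposition \ref{uni} shows that these are indeed the only two places, and in both cases odd $p$ together with $2^t\in\mathbb Z_p^{\times}$ preserves the structure. Hence the corollary follows by an essentially routine modification of the earlier argument.
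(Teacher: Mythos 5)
Your argument is correct: the only places the hypothesis $B_{\mathbf a_2}(\alpha,\beta)=1$ enters the proof of Proposition \ref{uni} are the discriminant $Q_{\mathbf a_2}(\alpha)Q_{\mathbf a_2}(\beta)-B_{\mathbf a_2}(\alpha,\beta)^2$ and the projection coefficient in the diagonalization, and in both spots replacing $1$ by the $p$-adic unit $2^t$ (for odd $p$) changes nothing structurally, so your line-by-line adaptation goes through. The paper, however, dispenses with the re-derivation entirely by a one-line reduction: since $2^{-t}\in\Z_p^{\times}$, one has $\Z_p\alpha+\Z_p\beta=\Z_p\alpha+\Z_p\left(2^{-t}\beta\right)$ with $B_{\mathbf a_2}(\alpha,2^{-t}\beta)=1$, so the local lattice $L_{\mathbf a_2,\beta}\otimes\Z_p=(\Z_p\alpha+\Z_p\beta)^{\perp}$ is literally the orthogonal complement already handled by Proposition \ref{uni}, whose proof is purely local. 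Both arguments rest on the same observation that $2^t$ is a unit of $\Z_p$; the rescaling trick buys brevity and avoids having to re-audit every step of the earlier proof, whereas your version makes explicit exactly which steps depend on the normalization of $B_{\mathbf a_2}(\alpha,\beta)$, which is a useful sanity check but more work than is needed.
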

\begin{proof}
This directly follows from Proposition \ref{uni} since $$\Z_p\alpha+\Z_p\beta=\Z_p\alpha+\Z_p \left(2^{-t}\beta\right)$$ and $B_{\mathbf a}(\alpha, 2^{-t}\beta)=1$.
\end{proof}

\begin{rmk}
Note that 
\begin{equation}\label{6.1}
\Delta_{m,\mathbf a}(\mathbf x +r_1'\beta)=\frac{m-2}{2}\{Q_{\mathbf a}(\mathbf x)+r_1'^2Q_{\mathbf a}(\beta)-2^tr_1'\}+2^tr_1'\end{equation}
for $\mathbf x \in L_{\mathbf a,\beta}$ with $B_{\mathbf a}(\alpha,\beta)=2^t$.
\end{rmk}

\vskip 1em

$\bold{\ X. \ A'(2)-1}$ 

Consider $\mathbf a =(1,2,2,5,8a_5',\cdots,8a_{16}') \in A'(2)$.\\
From definition of $A'(2)$, we have that $$a_5'=1, \quad a_6' \in \{1,2\}, \quad a_7' \in \{1,2,3,4\}.$$
Let $$\mathbf a_2:=(a_1,\cdots,a_{10})$$ and
$$\beta:=\mathbf e_1+\mathbf e_2+\mathbf e_3+\mathbf e_4-\mathbf e_5 \in \Z^{10}$$
with $B_{\mathbf a_2}(\alpha, \beta)=2.$
By Corollary \ref{2uni}, $L_{\mathbf a_2,\beta}\otimes \Z_p$ is (even) universal for odd prime $p$.
By using Lemma \ref{odd2} (with $j_1=1,j_2=2,j_3=3,j_4=4$), we may obtain that $L_{\mathbf a_2,\beta}\otimes \Z_2$ is even universal.
Therefore $L_{\mathbf a_2,\beta}$ is locally even universal.
By Theorem \ref{hkk}, for each $0\le t_1 \le s-1$ and $0 \le r_1 \le 2^{16}-1$ with $r_1 \equiv 0\pmod{2}$, $L_{\mathbf a_2,\beta}$ would represent an even integer which is equivalent with $2t_1-\left(\frac{r_1}{2}\right)^2Q_{\mathbf a_2}(\beta)+r_1(\equiv 0 \pmod{2})$ modulo $2s$, i.e., there is an $\mathbf x (t_1,r_1) \in L_{\mathbf a_2,\beta}$ such that
$$Q_{\mathbf a_2}(\mathbf x (t_1,r_1)) \equiv 2t_1-\left(\frac{r_1}{2}\right)^2Q_{\mathbf a_2}(\beta)+r_1 \pmod{2s}.$$
And then from (\ref{6.1}), we obtain 
$$\Delta_{m,\mathbf a_2}\left(\mathbf x(t_1,r_1)+\frac{r_1}{2}\beta\right) \equiv t_1(m-2)+r_1 \pmod{s(m-2)}.$$
In other words, the $10$-ary $m$-gonal form $\Delta_{m,\mathbf a_2}(\mathbf x)$ represents the residues $t_1(m-2)+r_1$ modulo $s(m-2)$ for $0 \le t_1 \le s$ and $-1\le r_1 \le 2^{16}-1$ with $r_1 \equiv 0 \pmod{2}$
in $[0,C_{\mathbf a}^{\text{even}}(m-2)]$
where $C_{\mathbf a}^{\text{even}}:=\frac{1}{2}\max \{Q_{\mathbf a_2,r_1,\beta}  (\mathbf x(t_1,r_1)) | 0 \le t_1 \le s, \  -1\le r_1(\equiv 0\pmod2) \le 2^{16}-1 \}+1$.

We now consider the case $r_1$ is odd.
Let $$\mathbf a_2:=(a_1,\cdots,a_{10})$$
and
$$\beta^{(1)}:=\mathbf e_1, \ \beta^{(2)}:=-\mathbf e_1+\mathbf e_2 $$
with $B_{\mathbf a_2}(\alpha,\beta^{(k)})=1$ for $k=1,2$. 
By Corollary \ref{2uni}, $L_{\mathbf a_2,\beta^{(1)}}\otimes \z_p$ and $L_{\mathbf a_2,\beta^{(2)}}\otimes \z_p$ are (even) universal for every odd prime $p$.
By using Lemma \ref{odd2} and Lemma \ref{12}, one may show that $L_{\mathbf a_2,\beta^{(1)}}\otimes \z_2$ and $L_{\mathbf a_2,\beta^{(2)}}\otimes \z_2$ represents every $2$-adic even integer except $2$ and $10$ up to unit-square, i.e., $L_{\mathbf a_2,\beta^{(1)}}\otimes \z_2$ and $L_{\mathbf a_2,\beta^{(2)}}\otimes \z_2$  represent every $2$-adic even integer which is not equivalent to $2$ modulo $8$.
Which induces that 
$$\bigcup\limits_{k=1}^2\{Q_{\mathbf a_2,r_1,\beta^{(k)}}(\mathbf x) |\mathbf x \in L_{\mathbf a_2, \beta^{(k)}} \otimes \mathbb Z_{2} \} =2\Z_2$$ for $r_1 \in \mathbb Z_2^{\times}$.
Therefore for each $0\le t_1 \le s$ and odd $-1 \le r_1 \le 2^{16}-1$, $L_{\mathbf a_2,\beta^{(k(t_1,r_1))}}$ would represent an even integer which is equivalent with $2t_1-r_1^2Q_{\mathbf a}(\beta^{(k(t_1,r_1))})+r_1$ modulo $2s$ for some $k(t_1,r_1) \in \{1,2 \}$, i.e., there is an $\mathbf x (t_1,r_1) \in L_{\mathbf a_2,\beta^{(k(t_1,r_1))}}$ for which $$Q_{\mathbf a_2}(\mathbf x (t_1,r_1)) \equiv 2t_1-r_1^2Q_{\mathbf a}(\beta^{(k(t_1,r_1))})+r_1 \pmod{2s}.$$
And then from (\ref{6.1}), we obtain  
$$\Delta_{m,\mathbf a_2}(\mathbf x(t_1,r_1)+r_1\beta^{(k(t_1,r_1))}) \equiv t_1(m-2)+r_1 \pmod{s(m-2)}.$$
So we obtain that the $10$-ary $m$-gonal form $\Delta_{m,\mathbf a_2}(\mathbf x)$ represents the residues $t_1(m-2)+r_1$ modulo $s(m-2)$ for $0 \le t_1 \le s$ and $-1\le r_1 \le 2^{16}-1$ with $r_1 \equiv 1 \pmod{2}$
in $[0,C_{\mathbf a}^{\text{odd}}(m-2)]$
where $C_{\mathbf a}^{\text{odd}}:= \frac{1}{2}\max \{Q_{\mathbf a_2,r_1,\beta^{(k(t_1,r_1))}}  (\mathbf x(t_1,r_1)) | 0 \le t_1 \le s, \ -1\le r_1(\equiv 1 \pmod2) \le 2^{16}-1 \}+1$.

Consequently, we conclude that when $\mathbf a =(1,2,2,5,8a_5',\cdots,8a_{16}') \in A'(2)$, 
for its $10$-subtuple $\mathbf a_2=(a_1,\cdots,a_{10})$, $\Delta_{m,\mathbf a_2}$ represents every residue $t_1(m-2)+r_1$ modulo $s(m-2)$ where $0 \le t_1 \le s$ and $-1\le r_1 \le 2^{16}-1$ in $[0,C_{\mathbf a}(m-2)]$ where $C_{\mathbf a}:=\max \{C_{\mathbf a}^{\text{even}}, C_{\mathbf a}^{\text{odd}}\}$.
So we may claim that the universality of an $m$-gonal form having its first coefficients as $\mathbf a =(1,2,2,5,8a_5',\cdots,8a_{16}') \in A'(2)$ is characterized by representability of every positive integer up to $C_{\mathbf a}(m-2)$.

\vskip 0.5em

Through a similar processing with the above, one may show that 
\begin{itemize}
\item[(1)] when $\mathbf a=(1,2,2,5,10,16a_5',\cdots, 16a_{16}') \in A'(2)$, with $\beta=\mathbf e_2$, $\beta^{(1)}=\mathbf e_1$,  $\beta^{(2)}=-\mathbf e_1+\mathbf e_2$,
\item[(2)] when $\mathbf a=(1,2,4,4,7,16a_5',\cdots,16a_{16}')\in A'(2)$, with $\beta=\mathbf e_1+\mathbf e_2+\mathbf e_3+\mathbf e_4+\mathbf e_5-\mathbf e_6$, $\beta^{(1)}=\mathbf e_1$,  $\beta^{(2)}=-\mathbf e_1+\mathbf e_2$,
\item[(3)] when $\mathbf a=(1,2,4,4,11,16a_5',\cdots,16a_{16}')\in A'(2)$, with $\beta=3(\mathbf e_1+\mathbf e_2+\mathbf e_3+\mathbf e_4+\mathbf e_5)-4\mathbf e_6$, $\beta^{(1)}=\mathbf e_1$,  $\beta^{(2)}=-\mathbf e_1+\mathbf e_2$, and
\item[(4)] when $\mathbf a=(1,2,4,7,12,16a_5',\cdots,16a_{16}')\in A'(2)$, with $\beta=-3(\mathbf e_1+\mathbf e_2+\mathbf e_3+\mathbf e_4+\mathbf e_5)+5\mathbf e_6$, $\beta^{(1)}=\mathbf e_1$,  $\beta^{(2)}=-\mathbf e_1+\mathbf e_2$,
\end{itemize}
for its $10$-subtuple $\mathbf a_2:=(a_1,\cdots,a_{10})$, $\Delta_{m,\mathbf a_2}(\mathbf x)$ represents the residues $t_1(m-2)+r_1$ modulo $s(m-2)$ where $0 \le t_1 \le s$ and $-1\le r_1 \le 2^{16}-1$ in $[0,C_{\mathbf a}(m-2)]$ for some constant $C_{\mathbf a}>0$ which is dependent only on $\mathbf a$.

\vskip 0.5em
When $\mathbf a= (1,2,4,4,9,16a_5',\cdots, 16a_{16}') \in A'(2)$, for $\mathbf a_2=(a_1,\cdots,a_{10})$ and $\beta=\mathbf e_1+\mathbf e_2+\mathbf e_3+\mathbf e_4+\mathbf e_5-\mathbf e_6$ with $B_{\mathbf a_2}(\alpha,\beta)=4$, since $L_{\mathbf a_2,\beta}$ is locally even universal, we may induce that  the $10$-ary $m$-gonal form $\Delta_{m,\mathbf a_2}(\mathbf x)$ represents the residues $t_1(m-2)+r_1$ modulo $s(m-2)$ for $0 \le t_1 \le s$ and $-1\le r_1 \le 2^{16}-1$ with $r_1 \equiv 0 \pmod{4}$
in $[0,C_{\mathbf a}^{(4)}(m-2)]$
where $C_{\mathbf a}^{(4)}:=\frac{1}{2}\max \{Q_{\mathbf a_2,\frac{r_1}{4},\beta}  (\mathbf x(t_1,r_1)) | 0 \le t_1 \le s, -1\le r_1(\equiv 0 \pmod4) \le 2^{16}-1  \}+1$.

For $\beta^{(1)}=\mathbf e_1$ with $B_{\mathbf a_2}(\alpha,\beta^{(1)})=1$ and $\beta^{(2)}=\mathbf e_2$ with $B_{\mathbf a_2}(\alpha,\beta^{(2)})=2$, since $L_{\mathbf a_2,\beta^{(1)}}\otimes \z_p$ and $L_{\mathbf a_2,\beta^{(2)}}\otimes \z_p$ are (even) universal for every odd prime $p$ and
$$\{Q_{\mathbf a_2,r_1,\beta^{(1)}}(\mathbf x) |\mathbf x \in L_{\mathbf a_2, \beta^{(1)}} \otimes \mathbb Z_{2} \} \cup \{Q_{\mathbf a_2,\frac{r_1}{2},\beta^{(2)}}(\mathbf x) |\mathbf x \in L_{\mathbf a_2, \beta^{(2)}} \otimes \mathbb Z_{2} \}=2\z_2$$
for $r_1 \equiv 2 \pmod{4}$,
one may induce that  the $10$-ary $m$-gonal form $\Delta_{m,\mathbf a_2}(\mathbf x)$ represents the residues $t_1(m-2)+r_1$ modulo $s(m-2)$ for $0 \le t_1 \le s$ and $-1\le r_1 \le 2^{16}-1$ with $r_1 \equiv 2 \pmod{4}$
in $[0,C_{\mathbf a}^{(2)}(m-2)]$
where $C_{\mathbf a}^{(2)}:=\frac{1}{2}\max \{Q_{\mathbf a_2,\frac{r_1}{k(t_1,r_1)},\beta^{(k(t_1,r_1))}}  (\mathbf x(t_1,r_1)) | 0 \le t_1 \le s, \  -1\le r_1 (\equiv 2 \pmod4) \le 2^{16}-1  \}+1$.

For $\beta^{(3)}=\mathbf e_1$ with $B_{\mathbf a_2}(\alpha,\beta^{(3)})=1$ and $\beta^{(4)}=-\mathbf e_1+\mathbf e_2$ with $B_{\mathbf a_2}(\alpha,\beta^{(4)})=1$, since $L_{\mathbf a_2,\beta^{(3)}}\otimes \z_p$ and $L_{\mathbf a_2,\beta^{(4)}}\otimes \z_p$ are (even) universal for every odd prime $p$ and
$$\{Q_{\mathbf a_2,r_1,\beta^{(3)}}(\mathbf x) |\mathbf x \in L_{\mathbf a_2, \beta^{(3)}} \otimes \mathbb Z_{2} \} \cup \{Q_{\mathbf a_2,r_1,\beta^{(4)}}(\mathbf x) |\mathbf x \in L_{\mathbf a_2, \beta^{(4)}} \otimes \mathbb Z_{2} \}=2\z_2$$
for $r_1 \equiv 1 \pmod{2}$,
one may induce that  the $10$-ary $m$-gonal form $\Delta_{m,\mathbf a_2}(\mathbf x)$ represents the residues $t_1(m-2)+r_1$ modulo $s(m-2)$ for $0 \le t_1 \le s$ and $-1\le r_1 \le 2^{16}-1$ with $r_1 \equiv 1 \pmod{2}$
in $[0,C_{\mathbf a}^{\text{odd}}(m-2)]$
where $C_{\mathbf a}^{\text{odd}}:=\frac{1}{2}\max \{Q_{\mathbf a_2,r_1,\beta^{(k(t_1,r_1))}}  (\mathbf x(t_1,r_1)) | 0 \le t_1 \le s, \ -1\le r_1 \le 2^{16}-1  \ (r_1 \equiv 1 \pmod{2}) \}+1$.

Consequently, we conclude that when $\mathbf a =(1,2,4,4,9,16a_5',\cdots,16a_{16}') \in A'(2)$, 
for its $10$-subtuple $\mathbf a_2=(a_1,\cdots,a_{10})$, $\Delta_{m,\mathbf a_2}$ represents the residues $t_1(m-2)+r_1$ modulo $s(m-2)$ where $0 \le t_1 \le s$ and $-1\le r_1 \le 2^{16}-1$ in $[0,C_{\mathbf a}(m-2)]$ where $C_{\mathbf a}:=\max \{C_{\mathbf a}^{(4)},C_{\mathbf a}^{(2)}, C_{\mathbf a}^{\text{odd}}\}$.
So we may claim that the universality of an $m$-gonal form having its first coefficients as $\mathbf a=(1,2,4,4,9,16a_5',\cdots, 16a_{16}') \in A'(2)$ is characterized by representability of every positive integer up to $C_{\mathbf a}(m-2)$.

\vskip 1em

\begin{rmk}

On the other hand, for the remainings
\begin{align*}\mathbf a \in \{ &(1,2,3,6,8a_5',\cdots, 8a_{16}'),(1,2,4,5,12,16a_5',\cdots, 16a_{16}'), \\
 & (1,2,2,3,8a_5',\cdots,8a_{16}'),(1,2,4,4,5,16a_6',\cdots, 16a_{16}') \} \subset A'(2), \end{align*}
there occur an issue which is similar one occured when $\mathbf a  \in A'(p) $ in Section 5.
We treat the above $\mathbf a$'s in $\bold{\ XI. \ A'(2)-2}$.
\end{rmk}

\vskip 1em

$\bold{\ XI. \ A'(2)-2}$ 

Consider $\mathbf a=(1,2,2,3,8a_5',\cdots,8a_{16}') \in A'(2)$.\\
From definition of $A'(2)$, we have that $$a_5'=1, \quad a_6' \in \{1,2\}, \quad a_7' \in \{1,2,3,4\}.$$
First, note that $S_{\mathbf a,r_1} \supseteq \mathbb N_0 \setminus 1+\frac{r_1}{2}+4\mathbb N_0$ when $r_1 \equiv 0 \pmod{4}$. 
For $\mathbf a_2=(a_1,\cdots,a_{10})$ and $\beta=\mathbf e_1$ with $B_{\mathbf a_2}(\alpha,\beta)=1$, since $L_{\mathbf a_2,\beta}$ locally represents every even integer in $2\z \setminus 2+8\z$, we may induce that  the $10$-ary $m$-gonal form $\Delta_{m,\mathbf a_2}(\mathbf x)$ represents the residues $t_1(m-2)+r_1$ modulo $s(m-2)$ for $t_1 \in S_{\mathbf a,r_1}$ and $-s(2^{16}-1)\le r_1 \le s(2^{16}-1)$ with $r_1 \equiv 0 \pmod{4}$
in $[0,C_{\mathbf a}^{(4)}(m-2)]$
where $C_{\mathbf a}^{(4)}:=\frac{1}{2}\max \{Q_{\mathbf a_2,r_1,\beta}  (\mathbf x(t_1,r_1)) | t_1 \in S_{\mathbf a,r_1}, \ -s(2^{16}-1)\le r_1(\equiv 0 \pmod{4}) \le s(2^{16}-1) \}+s$.

For $\beta^{(1)}=\mathbf e_1$ with $B_{\mathbf a_2}(\alpha,\beta^{(1)})=1$ and $\beta^{(2)}=\mathbf e_2$ with $B_{\mathbf a_2}(\alpha,\beta^{(2)})=2$, since $L_{\mathbf a_2,\beta^{(1)}}\otimes \z_p$ and $L_{\mathbf a_2,\beta^{(2)}}\otimes \z_p$ are (even) universal for every odd prime $p$ and
$$\{Q_{\mathbf a_2,r_1,\beta^{(1)}}(\mathbf x) |\mathbf x \in L_{\mathbf a_2, \beta^{(1)}} \otimes \mathbb Z_{2} \} \cup \{Q_{\mathbf a_2,\frac{r_1}{2},\beta^{(2)}}(\mathbf x) |\mathbf x \in L_{\mathbf a_2, \beta^{(2)}} \otimes \mathbb Z_{2} \}=2\z_2$$
for $r_1 \equiv 2 \pmod{4}$,
one may induce that  the $10$-ary $m$-gonal form $\Delta_{m,\mathbf a_2}(\mathbf x)$ represents the residues $t_1(m-2)+r_1$ modulo $s(m-2)$ for $t_1 \in S_{\mathbf a,r_1}$ and $-s(2^{16}-1)\le r_1 \le s(2^{16}-1)$ with $r_1 \equiv 2 \pmod{4}$
in $[0,C_{\mathbf a}^{(2)}(m-2)]$
where $C_{\mathbf a}^{(2)}:=\frac{1}{2}\max \{Q_{\mathbf a_2,\frac{r_1}{k(t_1,r_1)},\beta^{(k(t_1,r_1))}}  (\mathbf x(t_1,r_1)) | t_1 \in S_{\mathbf a,r_1}, \ -s(2^{16}-1)\le r_1(\equiv 2 \pmod{4}) \le s(2^{16}-1)  \}+s$.

For $\beta^{(3)}=\mathbf e_1$ with $B_{\mathbf a_2}(\alpha,\beta^{(3)})=1$ and $\beta^{(4)}=-\mathbf e_1+\mathbf e_2$ with $B_{\mathbf a_2}(\alpha,\beta^{(4)})=1$, since $L_{\mathbf a_2,\beta^{(3)}}\otimes \z_p$ and $L_{\mathbf a_2,\beta^{(4)}}\otimes \z_p$ are (even) universal for every odd prime $p$ and
$$\{Q_{\mathbf a_2,r_1,\beta^{(3)}}(\mathbf x) |\mathbf x \in L_{\mathbf a_2, \beta^{(3)}} \otimes \mathbb Z_{2} \} \cup \{Q_{\mathbf a_2,r_1,\beta^{(4)}}(\mathbf x) |\mathbf x \in L_{\mathbf a_2, \beta^{(4)}} \otimes \mathbb Z_{2} \}=2\z_2$$
for $r_1 \equiv 1 \pmod{2}$,
one may induce that  the $10$-ary $m$-gonal form $\Delta_{m,\mathbf a_2}(\mathbf x)$ represents the residues $t_1(m-2)+r_1$ modulo $s(m-2)$ for $t_1 \in S_{\mathbf a,r_1}$ and $-s(2^{16}-1)\le r_1 \le s(2^{16}-1)$ with $r_1 \equiv 1 \pmod{2}$
in $[0,C_{\mathbf a}^{\text{odd}}(m-2)]$
where $C_{\mathbf a}^{\text{odd}}:=\frac{1}{2}\max \{Q_{\mathbf a_2,r_1,\beta^{(k(t_1,r_1))}}  (\mathbf x(t_1,r_1)) |  t_1 \in S_{\mathbf a,r_1}, \ -s(2^{16}-1)\le r_1 ( \equiv 1 \pmod{2}) \le s(2^{16}-1) \  \}+s$.

Consequently, we conclude that when $\mathbf a =(1,2,2,3,8a_5',\cdots,8a_{16}') \in A'(2)$, 
for its $10$-subtuple $\mathbf a_2=(a_1,\cdots,a_{10})$, $\Delta_{m,\mathbf a_2}$ represents the residues $t_1(m-2)+r_1$ modulo $s(m-2)$ where $t_1 \in S_{\mathbf a,r_1}$ and $-s(2^{16}-1)\le r_1 \le s(2^{16}-1)$ in $[0,C_{\mathbf a}(m-2)]$ where $C_{\mathbf a}:=\max \{ C_{\mathbf a}^{(4)},C_{\mathbf a}^{(2)}, C_{\mathbf a}^{\text{odd}}\}$.
So we may claim that the universality of an $m$-gonal form having its first coefficients as $\mathbf a=(1,2,2,3,8a_5',\cdots,8a_{16}') \in A'(2)$ is characterized by representability of every positive integer up to $C_{\mathbf a}(m-2)$.
\vskip0.5em

For $\mathbf a = (1,2,3,6,8a_5',\cdots, 8a_{16}') \in A'(2)$, we have that $S_{\mathbf a, r_1} \supseteq \mathbb N_0 \setminus -\frac{r_1}{2} + 4\mathbb N_0$ when $r_1 \equiv 2 \pmod{4}$.
Through similar arguments with the above, with
\begin{itemize}
\item[(1)] when $r_1 \equiv 0 \pmod{4}$, $\beta=\mathbf e_1+\mathbf e_2+\mathbf e_3+\mathbf e_4-\frac{2}{a_6'}\mathbf e_5$
\item[(2)] when $r_1 \equiv 2 \pmod{4}$, $\beta=\mathbf e_1$ 
\item[(3)] when $r_1 \equiv 1 \pmod{2}$, $\beta^{(1)}=\mathbf e_1$ and $\beta^{(2)}=-\mathbf e_1+\mathbf e_2$
\end{itemize}
for its $10$-subtuple $\mathbf a_2=(a_1,\cdots,a_{10})$, $\Delta_{m,\mathbf a_2}$ represents the residues $t_1(m-2)+r_1$ modulo $s(m-2)$ where $t_1 \in S_{\mathbf a,r_1}$ and $-s(2^{16}-1)\le r_1 \le s(2^{16}-1)$ in $[0,C_{\mathbf a}(m-2)]$ for some constant $C_{\mathbf a}>0$ which is dependent only on $\mathbf a$.

\vskip 0.5em

For $\mathbf a =(1,2,4,5,12,16a_5',\cdots, 16a_{16}')\in A'(2)$, we have that $S_{\mathbf a, r_1} \supseteq \mathbb N_0 \setminus -1-\frac{r_1}{2}+8\mathbb N_0$ when $r_1 \equiv 4 \pmod{8}$.
Through similar arguments with the above, with
\begin{itemize}
\item[(1)] when $r_1 \equiv 0 \pmod{8}$, $\beta=\mathbf e_1+\mathbf e_2+\mathbf e_3+\mathbf e_4+\mathbf e_5-\mathbf e_6$
\item[(2)] when $r_1 \equiv 4 \pmod{8}$, $\beta=\mathbf e_3$ 
\item[(3)] when $r_1 \equiv 2 \pmod{4}$, $\beta^{(3)}=\mathbf e_1$ and $\beta^{(4)}=\mathbf e_2$ 
\item[(4)] when $r_1 \equiv 1 \pmod{2}$, $\beta^{(5)}=\mathbf e_1$ and $\beta^{(6)}=-\mathbf e_1+\mathbf e_2$
\end{itemize}
for its $10$-subtuple $\mathbf a_2=(a_1,\cdots,a_{10})$, $\Delta_{m,\mathbf a_2}$ represents the residues $t_1(m-2)+r_1$ modulo $s(m-2)$ where $t_1 \in S_{\mathbf a,r_1}$ and $-s(2^{16}-1)\le r_1 \le s(2^{16}-1)$ in $[0,C_{\mathbf a}(m-2)]$ for some constant $C_{\mathbf a}>0$ which is dependent only on $\mathbf a$.

\vskip0.5em

For $\mathbf a = (1,2,4,4,5,16a_6',\cdots, 16a_{16}') \in A'(2)$, we have that $S_{\mathbf a, r_1} \supseteq \mathbb N_0 \setminus 1 -\frac{r_1}{2} + 8\mathbb N_0$ when $r_1 \equiv 0 \pmod{8}$.
Through similar arguments with the above, with
\begin{itemize}
\item[(1)] when $r_1 \equiv 0 \pmod{8}$, $\beta=\mathbf e_1$
\item[(2)] when $r_1 \equiv 4 \pmod{8}$, $\beta^{(1)}=\mathbf e_1$ and $\beta^{(2)}=\mathbf e_3$ 
\item[(3)] when $r_1 \equiv 2 \pmod{4}$, $\beta^{(3)}=\mathbf e_1$ and $\beta^{(4)}=\mathbf e_2$ 
\item[(4)] when $r_1 \equiv 1 \pmod{2}$, $\beta^{(5)}=\mathbf e_1$ and $\beta^{(6)}=-\mathbf e_1+\mathbf e_2$
\end{itemize}
for its $10$-subtuple $\mathbf a_2=(a_1,\cdots,a_{10})$, $\Delta_{m,\mathbf a_2}$ represents the residues $t_1(m-2)+r_1$ modulo $s(m-2)$ where $t_1 \in S_{\mathbf a,r_1}$ and $-s(2^{16}-1)\le r_1 \le s(2^{16}-1)$ in $[0,C_{\mathbf a}(m-2)]$ for some constant $C_{\mathbf a}>0$ which is dependent only on $\mathbf a$.

\end{document}